\newtheorem{theorem}{Theorem}[section]
\newtheorem{lemma}[theorem]{Lemma}
\newtheorem{proposition}[theorem]{Proposition}
\newtheorem{corollary}[theorem]{Corollary}
\theoremstyle{definition}
\newtheorem{definition-proposition}[theorem]{Definition-Proposition}
\newtheorem{definition}[theorem]{Definition}
\newtheorem{example}[theorem]{Example}
\newtheorem{remark}[theorem]{Remark}
\newcommand{\RV}{\mf R}
\newcommand{\oa}{\mathfrak{F}_\mathcal{A}}
\newcommand{\oak}[1]{\mathfrak{F}_{\mathcal{A}^{(#1)}}}
\newcommand{\act}{\operatorname{act}}
\newcommand{\coker}{\operatorname{coker}}
\newcommand{\LFP}{{\operatorname{\mathbf{Pr_c}}}}
\newcommand{\Pres}{\operatorname{\mathbf{Pr}}}
\newcommand{\ind}{\operatorname{ind}}
\newcommand{\comp}{\operatorname{comp}}
\newcommand{\h}{\hbar}
\newcommand{\mf}{\mathfrak}
\newcommand{\Vect}{\operatorname{Vect}}
\newcommand{\End}{\operatorname{End}}
\newcommand{\Hom}{\operatorname{Hom}}
\newcommand{\iHom}{\underline{\operatorname{Hom}}}
\newcommand{\Rep}{\operatorname{Rep}}
\newcommand{\Repq}{\operatorname{Rep}_q\!}
\newcommand{\strike}[1]{}
\newcommand{\K}{\mathbf{k}}
\newcommand{\cA}{\mathcal{A}}
\newcommand{\cB}{\mathcal{B}}
\newcommand{\cC}{\mathcal{C}}
\newcommand{\cD}{\mathcal{D}}
\newcommand{\cO}{\mathcal{O}}
\newcommand{\cM}{\mathcal{M}}
\newcommand{\cN}{\mathcal{N}}
\newcommand{\QCoh}{\operatorname{QCoh}}
\newcommand{\Rex}{\operatorname{\mathbf{Rex}}}
\newcommand{\Lex}{\operatorname{\mathbf{Lex}}}
\newcommand{\Ab}{\operatorname{\mathbf{Ab}}}
\newcommand{\cE}{\mathcal{E}}
\newcommand{\dS}{\Big/\hspace{-5pt}\Big/}
\newcommand{\g}{\mathfrak{g}}
\newcommand{\ot}{\otimes}
\newcommand{\bt}{\boxtimes}
\newcommand{\id}{\operatorname{id}}
\newcommand{\RR}{\mathbb{R}}
\newcommand{\CC}{\mathbb{C}}
\newcommand{\ZZ}{\mathbb{Z}}
\newcommand{\oaH}{\widetilde H}
\newcommand{\oaHk}[1]{\widetilde H_{#1}}
\newcommand{\oo}{\infty}
\newcommand{\cat}{\operatorname{Cat}}
\newcommand{\uch}{\underline{\operatorname{Ch}}}
\newcommand{\ch}{\operatorname{Ch}}
\newcommand{\modu}{\operatorname{-mod}}
\newcommand{\modul}{\operatorname{mod}}
\newcommand{\un}{\mathbf{1}}
\newcommand{\Gr}{\operatorname{\mathbf{Gr}}}
\def\HH{\hbox{${\mathcal H}$\kern-5.2pt${\mathcal H}$}}
\numberwithin{equation}{section}
\begin{document}
\title[Integrating quantum groups]{Integrating quantum groups over surfaces}
\author{David Ben-Zvi}\address{Department of Mathematics\\University
  of Texas\\Austin, TX 78712-0257} \email{benzvi@math.utexas.edu}
\author{Adrien Brochier} \address{MPIM, Bonn}\email{abrochier@mpim-bonn.mpg.de}
\author{David Jordan} \address{School of Mathematics\\University of Edinburgh\\ Edinburgh, UK}\email{D.Jordan@ed.ac.uk}

\maketitle
\begin{abstract}
We apply the mechanism of factorization homology to construct and compute category-valued two-dimensional topological field theories associated to braided tensor categories, generalizing the $(0,1,2)$-dimensional part of Crane-Yetter-Kauffman 4D TFTs associated to modular categories.   Starting from modules for the Drinfeld-Jimbo quantum group $U_q(\g)$ we obtain in this way an aspect of topologically twisted 4-dimensional ${\mathcal N}=4$ super Yang-Mills theory, the setting introduced by Kapustin-Witten for the geometric Langlands program. 

For punctured surfaces, in particular, we produce explicit categories which quantize character varieties (moduli of $G$-local systems) on the surface; these give uniform constructions of a variety of well-known algebras in quantum group theory.  From the annulus, we recover the reflection equation algebra associated to $U_q(\g)$, and from the punctured torus we recover the algebra of quantum differential operators associated to $U_q(\g)$.  From an arbitrary surface we recover Alekseev's moduli algebras. Our construction gives an intrinsically topological explanation for well-known mapping class group symmetries and braid group actions associated to these algebras, in particular the elliptic modular symmetry (difference Fourier transform) of quantum $\cD$-modules.
\end{abstract}

\tableofcontents

\section{Introduction}\label{sec:introduction}

In this paper we develop factorization homology valued in braided tensor categories and apply it to explicitly construct and compute category-valued invariants of topological surfaces. The most important example for us is the braided tensor category $\Repq G$ of integrable representations for the quantum group associated to the reductive group $G$: from these we obtain categories which we call \emph{quantum character varieties}.  These quantize moduli spaces of local systems on surfaces and provide a unifying perspective on various constructions in quantum group theory. Quantum character varieties form the $2-$dimensional part of a topological field theory which is a model for the Kapustin-Witten theory~\cite{Kapustin2007} (GL-twisted 4-dimensional ${\mathcal N}=4$ super Yang-Mills theory), and provide the spectral side of the quantum Betti geometric Langlands conjecture~\cite{Ben-Zvi2016a}. These connections (which are discussed further in Sections~\ref{TFT section} and~\ref{Betti section}) suggest many rich structures for quantum character varieties, some of which we discuss in this paper, and many which we plan to explore in future papers.

\subsection{Factorization homology of surfaces}\label{sec:intro:facto}
Factorization homology was originally introduced by Beilinson and Drinfeld~\cite{Beilinson2004} in the setting of chiral conformal field theory, as an abstraction (and geometric interpretation) of the functor of conformal blocks of a vertex algebra. Factorization homology in the topological, rather than conformal, setting is developed in~\cite{Lurie} and further in~\cite{Ayala2015, Ayala2017}.  In this paper we use the terminology and formalism of~\cite{Ayala2015, Ayala2017}. See~\cite{Ginot2015} for a survey and~\cite{Costello} for more general applications to quantum field theory).

The algebraic input to factorization homology of surfaces (in the terminology of~\cite{Ayala2015} and subsequent papers) is a ``2-disk algebra" in an appropriate symmetric monoidal higher category $\cC^\bt$. Informally speaking, a 2-disk algebra is an object $\cA\in \cC$ equipped with operations $\cA^{\bt k}\to \cA$ parametrized in a locally constant fashion by embeddings of disjoint unions of $k$ disks into a large disk, and satisfying a composition law governed by composition of disk embeddings. There are several variants of 2-disk algebras, named according to the kind of tangential structure carried by the disks and embeddings: framed 2-disk algebras are better known as $E_2$ algebras (algebras over the little 2-disk operad), while oriented 2-disk algebras are (confusingly) known as framed $E_2$-algebras (algebras over the framed little 2-disk operad). We adopt the terminology of~\cite{Ayala2015} as it reflects the type of surfaces over which the corresponding algebras may be integrated.

Our approach to constructing quantum character varieties is to apply the mechanism of factorization homology in the nonlinear setting of categories rather than the linear setting of vector spaces or chain complexes, i.e., our target is a certain (2,1)-category $\cC^{\bt}$ of $\K$-linear categories, linear functors and natural isomorphisms.\footnote{Let us delay until Section \ref{sec:category} a discussion of the precise 2-categorical framework.}  An elementary but important observation is that in this case, an $E_2$-structure is determined already by the binary product $\cA^{\bt 2}\to \cA$ labeled by a fixed embedding, an associator natural isomorphism, and a collection of braid group actions, given by monodromy over the configuration space. Taking into account compositions and coherences, one finds that an $E_2$-algebra in categories is simply a braided tensor category.  Likewise, a 2-disk algebra in categories is a {\em balanced} (braided) tensor category (e.g. a pivotal braided category; see Remark \ref{rmk:balanced}).

The factorization homology of a framed surface $S$ with coefficients in an $E_2$-algebra $\cA$ is another object of $\cC^\bt$, denoted $\int_S \cA$.  The assignment,
\[
S \longmapsto \int_S \cA  \in \cC^\bt,
\]
induces a symmetric monoidal functor from a certain category of framed manifolds and framed embeddings to $\cC^{\bt}$.   When $\cA$ is an oriented 2-disk algebra, this functor becomes independent, up to equivalence, of the framing and then descends to a functor from the analogous category of \emph{oriented} manifolds and oriented embeddings. 

The invariant thus produced may be characterized as follows: to an open disk $D^2$ is assigned, by definition, the underlying object of $\cA$; to a general surface $S$, is assigned the ``integration" (i.e. co-limit) over all possible embeddings of a finite disjoint union of disks $i:(D^2)^{\sqcup k}\hookrightarrow S$.  By construction the factorization homology of any surface $S$ with coefficients in $\cA$ carries a universal morphism, $$\Delta_i:\cA^{\bt k}\to \int_S\cA,$$ for every disk embedding, which moreover factors through the $E_2$-multiplication (in our case, braided tensor structure), whenever it factors through a larger disk embedding.  Finally, the unit of $\cA$ endows the factorization homology of any surface with a canonical pointing: there is a distinguished object $$\cO_{\cA,S}\in \int_S\cA$$ which can be realized as $$\cO_{\cA,S}\simeq \Delta_i(1_\cA),$$ for any disk embedding $i:D^2\hookrightarrow S$. 

Factorization homology is a homology theory in the sense that it satisfies an \emph{excision} property which is a primary tool for computations.  Given an oriented 1-manifold $M$ (in particular $M=S^1$ or $M=I$), the factorization homology $\int_{M\times \RR} \cA$ carries a canonical $E_1$-structure (i.e., an associative product; in our categorical setting, a tensor product) from the inclusion of disjoint unions of intervals inside a larger interval (i.e., we stack cylinders inside a larger cylinder). Moreover the invariant of a manifold with a collared boundary $M$ is naturally a module for the $E_1$-structure on the invariant of its collared boundary (see Section~\ref{sec:examples} for a number of figures illustrating excision in examples). 
This structure gives rise to the \emph{excision} property of factorization homology: 
let $S=X^- \underset{M\times \RR}{\sqcup} X^+$ be a collar gluing of $X^{\pm}$ along a 1-manifold $P$ with a trivialization $N\cong M\times \RR$ of a tubular neighborhood $N$ of $M$. Then $\int_{X^{\pm}} \cA$ are left and right $\int_{M \times \RR}\cA$-modules respectively, and there is an equivalence \cite{Ayala2017,Ayala2015} of categories
	\[
		\int_S \cA \simeq \int_{X^-} \cA \underset{\int_{M\times\RR} \cA}{\boxtimes}  \int_{X^+} \cA.
	\]

\subsection{Summary of results} Fix a balanced abelian rigid braided tensor category $\cA$.  Our main results are as follows:

\begin{enumerate}
	\item[$\bullet$] We construct the category $\int_S \cA$ for any oriented surface $S$, equipped with a distinguished object $\cO_{\cA,S}$ and carrying an action of the group of orientation-preserving diffeomorphisms of $S$. 

	\item[$\bullet$] To a surface with distinguished $S^1$ boundary component together with a point chosen on it we attach a canonical algebra object $$A_S:=\underline{\End}_\cA(\cO_{\cA,S})\in \cA,$$ the internal endomorphism algebra of $\cO_{\cA,S}$. In Theorem \ref{thm:main} we produce an equivalence, $$\int_S\cA\simeq A_S\modu_{\cA},$$ as a module category for $\cA$.

\item[$\bullet$] In Section~\ref{sec:surfaces}, we develop a combinatorial framework which allows us to produce explicit presentations of the algebras $A_S$ from a combinatorial presentation (``handle and comb decomposition") of a punctured surface $S$.  This involves giving an explicit computation of relative tensor products dictated by the excision axiom.

\end{enumerate}

If we drop the assumption that $\cA$ is balanced, we obtain an invariant of framed surfaces.   We also prove analogous results to each of the above.  While our results apply to general abelian rigid braided tensor categories, we will mostly be concerned in this paper with the example $\cA=\Repq G$ of representations of the quantum group $U_q\mathfrak{g}$, associated to a reductive group $G$ and an arbitrary $q\in\CC^\times$ (see Section \ref{qg-defs} for a precise definition).  Note that the $A_S$-modules which appear in this case are required to be locally finite as modules for $U_q(\mathfrak{g})$, however they are not in any sense locally finite for the $A_S$-action.
In the special case of the symmetric tensor category $\cA=\Rep G$, the factorization homology $\int_S \Rep G$ makes sense on {\em any} topological space $S$.

In the derived setting, it was proved in~\cite{Ben-Zvi2010} that the result is the dg category of quasi-coherent sheaves on the  {\em character stack} $\uch_G(S)$, the moduli stack of $G$-local systems on $S$ (parametrizing homomorphisms $\rho:\pi_1(S)\to G$, modulo conjugation in $G$):
$$ \int_S \Rep_{dg} G \simeq QC_{dg}(\uch_G(S)).$$  

\begin{remark} More precisely, this statement holds for the natural derived enhancement of the character stack. The difference between the derived and underived character stacks does not affect the abelian categories of quasi-coherent sheaves, and so will be invisible for the constructions in this paper. Thus the reader is invited to interpret $\uch_G(S)$ in derived or underived fashion, without affecting the results discussed.
\end{remark}

The analogous result for abelian categories of quasi-coherent sheaves on character stacks of surfaces follows from the main results of this paper (see Remark~\ref{derived} for the relation between the derived and abelian theories).
Thus we will consider the category $\int_S\Repq G$ to be the quantum analog of sheaves on the character stack, and refer to it as the ($G$-){\em quantum character variety}. In this particular we have the following:  
\begin{enumerate}
	\item[$\bullet$] For any surface $S$ the subalgebra of invariants of $A_S$ is an explicit  quantization of the Poisson algebra of functions on the $G$-character variety $\ch_G(S),$  equivariant for the action of the mapping class group of $S$, (Section \ref{sec:CharVar}).

	\item[$\bullet$] For the annulus $Ann$ the algebra $A_{Ann}$ recovers the reflection equation algebra $\cO_q(G)$.  For the punctured torus $A_S$ recovers the algebra $\cD_q(G)$ of quantum differential operators on $G$.  More generally, for a punctured surface of arbitrary genus we  recover the so-called moduli algebras of Alekseev.  See Section \ref{sec:examples} for a comprehensive review of these examples.
\end{enumerate}

\subsubsection{Factorization homology of linear categories}
In order to set up factorization homology of linear categories, we require a suitable higher categorical framework --- specifically we require a higher category of categories which carries a symmetric monoidal structures and is closed under (sufficiently many) colimits. The collection of abelian categories fails to satisfy these properties, and must be enlarged, in one of several related ways. In setting up the general foundations we establish a formalism that is potentially of independent interest to specialists in the theory of tensor categories. 

\begin{enumerate}
	\item[$\bullet$] We define factorization homology of surfaces with coefficients in rigid braided tensor categories in the $2$-category $\LFP$ of compactly generated presentable categories (or equivalently $\Rex$ of finitely cocomplete categories).

\item[$\bullet$] We develop several techniques, related to Beck monadicity, for describing module categories explicitly as categories of modules for algebra objects, mostly extending well-known results from the setting of finite (and typically semi-simple) tensor categories (cf.~\cite{Etingof2010,Ostrik2003,Douglas2013}) to the infinite and non-semisimple setting.  In particular we prove monadicity results describing various constructions with abelian tensor categories (such as base changes, relative tensor products, and traces) as categories of modules over internal endomorphism algebras. 

\end{enumerate}

\begin{remark}[Derived version]\label{derived}
Prompted by discussions following the posting of our preprint, J.~Lurie~\cite{Lurie2016} has proved that (contrary to widely held expectations) the collection of Grothendieck abelian categories is closed under tensor products. The result is part of a general theory of derived analogues, called Grothendieck prestable $\infty$-categories, which are roughly (the positive halves of) stable $\infty$-categories (or in the $k$-linear setting where we work, $k$-linear differential graded categories) with t-structures whose heart is a Grothendieck abelian category. It follows from Lurie's results that one can define factorization homology of $E_2$-categories either in the setting of dg (or stable $\infty$) categories, or in the refined setting of Grothendieck prestable categories, i.e., keeping track of t-structures.  For rigid braided tensor categories one can check that our constructions in the discrete setting are compatible with their derived analogues -- i.e., our quantum character varieties are the hearts of the natural derived quantum character varieties, obtained by integrating the dg categories of representations of quantum groups and keeping track of $t$-structures. (We intend to return to this derived comparison in a later paper.) In particular this implies {\it a priori} that the categories constructed in this paper are in fact abelian -- a fact which is evident from our explicit description.
\end{remark}

\begin{remark}[Skein Categories]
	It is interesting to compare factorization homology with the theory of skein algebras and skein categories (see e.g.~\cite{Roche2002,Frohman2000,Walker}), which provide a convenient graphical calculus for quantizing $SL_2$- (and more generally $SL_n$-) character varieties and for constructing associated 3-manifold invariants.  
Roughly speaking, to each surface $S$ and each braided tensor category $\cA$ with a choice of presentation (i.e. a collection of objects generating under tensor product, morphisms generating under composition, and a specification of ``local" relations on morphisms), there is an associated skein category in which an object is a configuration of disks, each colored by a generating object of $\cA$, and in
which a morphism is the quotient of the vector space of colored tangles
by local relations in $\cA$.  The result is not typically an abelian category.  The skein algebra is the endomorphism algebra of the empty configuration, so it consists of closed tangles drawn in $S\times I$.

There is an evident functor to the corresponding factorization homology category, and so one may regard the latter as a co-completion of the former, though in what precise sense this should be meant is beyond the scope of the present paper.  

Nevertheless, this perspective gives a very useful dictionary:  While skein categories are elementary to define, they can be difficult to compute with algebraically, and do not have good functoriality properties.  Factorization homology on the other hand enjoys manifest locality and functoriality properties, independence of presentation, good behavior for closed as well as open surfaces and a natural (and nontrivial) derived extension, which make it readily applicable to problems in geometric representation theory. It would be very interesting to identify our factorization homology construction and the similarly general abstract skein theory developed by Walker.
\end{remark}

\subsection{Detailed overview}
We now describe our main results in more detail.

\subsubsection{Punctured surfaces}

Our main result, Theorem \ref{thm:main}, is a concrete computation of the quantum character variety of an arbitrary punctured surface $S$.  
Recall that the inclusion $i:D^2\hookrightarrow S$ of disk in a surface defines a  functor $$\Delta_i:\cA\to \int_S \cA, \hskip.3in \mathbf{1}_\cA\mapsto \cO_{\cA,S}$$ from $\cA$ to the quantum character variety, sending the unit $\mathbf{1}_\cA$ to the ``quantum structure sheaf", the distinguished object $\cO_{\cA,S}\in\int_S\cA$. 
In the commutative case $\cA=\Rep G$ this functor is given by pulling back sheaves under $$\uch_G(S)\longrightarrow\uch_G(D^2)\simeq pt/G,$$ and the distinguished object is the structure sheaf $\cO_{\uch_G(S)}$ of the classical character stack. Unlike the commutative case, however, the quantum character variety does not carry an $\cA$-module structure in general. This is the two-dimensional analog of the one-dimensional assertion that the cocenter $A/[A,A]$ (or Hochschild homology in the derived setting) $\int_{S^1}A$ of an associative algebra $A$ carries a trace map from $A$ but has no natural $A$-module structure.

Our description of quantum character varieties is based on the observation that if we chose a boundary interval $I\subset \partial S$ we obtain a natural $\cA$-module structure on $\int_S\cA$. 
\begin{remark} In fact this choice equips $\int_S\cA$ with the structure moreover of a \emph{braided module category}; this structure will be developed and exploited in a follow-up paper\cite{Ben-Zvi2016}.\end{remark}

We can then describe the entire category $\int_S\cA$ as the category of modules in $\cA$ for the internal endomorphism algebra of the distinguished object, relative to this $\cA$-module structure, as follows.

Let $A_S:=\underline{\End}_\cA(\cO_{\cA,S})$ denote the internal endomorphism algebra of $\cO_{\cA,S}$.  Here, the internal endomorphism algebra, $\underline{\End}_\cA(m)$ is an algebra in $\cA$, determined by an $\cA$-module category $\cM$ and an object $m$ in $\cM$.  See Section~\ref{sec:tensorcat} for more details.

\begin{theorem}[Theorem~\ref{thm:main}]  Let $\cA$ be a rigid abelian braided tensor category in $\LFP$.  We have an equivalence of categories,
$$\int_S\cA \simeq A_S\modu_\cA$$ 
respecting natural actions of the mapping class group of $S$ relative to the boundary.
\end{theorem}

\begin{remark} For $\cA=\Rep G$, this captures the statement that $\uch_G(S)$ is affine over $pt/G$.\end{remark}

		The algebra $A_S$ can be described in completely explicit terms, once one chooses a combinatorial presentation of the surface $S$, which we call a ``gluing pattern'' $P$ (see Section~\ref{sec:moduliAlgebra} and Figure~\ref{gluing-pattern} therein for a quick sense of what we mean). Suppose the surface $S$ has genus $g$ and $r\geq 1$ punctures, so that its fundamental group is free of rank $2g+r-1$. The gluing pattern highlights a set of free generators, and for each rigid braided tensor category $\cA$ determines an algebra $a_P \in \cA$. Loosely speaking this algebra is built from $2g+r-1$ copies of the reflection equation algebra $\oa=A_{Ann}$ (see Definition~\ref{def:REA}; it only depends on $\cA$), but with relations among these ``generators'' determined by $P$. Explicitly, $ a_P\cong (\oa)^{\ot 2g+r-1}$ where each tensor factor is a sub-algebra, and where cross relations are expressed using the braiding on $\cA$ and described in Section~\ref{sec:moduliAlgebra}.  We emphasize, however, that in general $a_P$ is \emph{not} simply the braided tensor product of copies of the $\oa$'s -- this appears only in the case of the many-punctured disk -- but rather the relations depend in an interesting way on the pattern $P$: see Section \ref{sec:surfaces}.  
		\begin{theorem}
Given a gluing pattern $P$ for $S$, there is a canonical isomorphism
\[
A_S \cong a_P
\]
as algebras in $\cA$.
		\end{theorem}

The proof is based on applications of the Barr-Beck monadicity theorem developed in Section~\ref{sec:category}, which allow us to the describe various module categories over a rigid tensor category $\cA$ as categories of modules over an algebra internal to $\cA$.  It is really here that the rigidity assumption is most important.

The disk, annulus and once-punctured torus each admit a unique gluing pattern; hence we will denote these simply by $D^2$, $Ann$, $T^2\backslash D^2$, respectively. For higher genus and number of punctures, a given surface may admit several distinct gluing patterns; each such gives a different presentation for the algebra $A_S$.

\subsubsection{The case of quantum groups}\label{qg-defs}
Our main example will come from fixing a reductive algebraic group $G$, and a Killing form $\kappa$ on $\mf g=Lie(G)$.  We will consider the balanced tensor category $\cA = \Repq G$: by this notation we will denote either the category of locally finite-dimensional $U_q(\mf g)$-modules, where $U_q(\mf g)$ is the quantum group associated to $\mf g$ and $\kappa$ when $G$ is simply connected, or more generally the finite-index braided tensor subcategory of $U_q(\mf{g})\modu$ corresponding to $G$ (and determined by its Cartan subgroup) when $G$ is not simply connected.  We do not recall the presentation of $U_q(\mf g)$ here; it can be found e.g. in~\cite[Section~9.1]{Chari1994}.  When $q$ is a root of unity, there are several different versions of $\Repq G$ one may consider, and to which our results apply; see Section \ref{roots-of-one}.  

When $\cA = \Repq G$, the algebras $a_P$ recover several interesting and well-known constructions in the geometric representation theory of quantum groups:
\begin{enumerate}
	\item For the disk, we have $a_{D^2}\cong \mathbf{1}_\cA$, which is just the tautological equivalence $\cA\simeq \mathbf{1}_\cA\modu_\cA$.
	\item For the annulus or cylinder, we have $a_{Ann}=\cO_q(G)$, the reflection equation algebra~\cite{Donin2003a,Donin2003,Donin2002,Donin2002a,Kolb2009,Lyubashenko1995,Lyubashenko1994,Majid1993b}.
\item For the punctured torus, we have $a_{T^2\backslash D^2} \cong \cD_q(G)$, the algebra of quantum differential operators on $G$. This algebra has received a lot of attention in recent years~\cite{Semenov1994,Alekseev1993,Alekseev1996,Backelin2008,Backelin2006,Varagnolo2010,Jordan2009,Jordan2014, Brochier2017}.

\item More generally, the moduli algebras of Alekseev \cite{Alekseev1993}, ~\cite{Alekseev1996} can be recovered as algebras $a_P$ associated to certain gluing patterns for higher genus surfaces.
\end{enumerate}

In particular, it should be emphasized that the algebras $a_P$ have
explicit generators-and-relations presentations, PBW bases, and
interesting representation theory related to the symplectic geometry of
the classical character variety.  These features are not typical of
factorization homology in general, but emerge from the representation
theoretic framework.

On the topological side, the reflection equation algebra, quantum
differential operator algebra, and moduli algebras all admit interesting
actions of mapping class groups of their associated surfaces,
and can moreover be used to produce representations of the
associated surface braid groups, extending Reshetikhin
and Turaev's constructions~\cite{Reshetikhin1990}.  Whereas these
topological structures have historically been themselves constructed via
generators-and-relations comparison, they now follow naturally from the
topological framework of factorization homology.  This is an important
distinction if one wants to produce topological invariants and
categorical structures from these constructions - the categories and
distinguished objects associated to surfaces are functorial, but the
particular presentations are not.

\subsubsection{Quantization of character varieties}

Let $S$ be a surface. There is an affine scheme $\RV_G(S)$ whose $\K$-points are canonically identified with the set of group homomorphisms
\[
	\RV_G(S)=\{\rho:\pi_1(S)\longrightarrow G\}.
\]
Equivalently, $\RV_G(S)$ is the moduli space of local systems equipped with a trivialization at a fixed point of $S$. 

The stack $\uch_G(S)$ is thus the quotient of $\RV_G(S)$ by the action of $G$ changing the trivialization.
It follows that the categories of sheaves on the two spaces recover each other by (de-)equivariantization, and sheaves on $\uch_G(S)$ are given by modules over the algebra of functions on $\RV_G(S)$ when considered as an object in $QC(pt/G)=\Rep G$.

 We denote by $\ch_G(S)$ the $G$-character \emph{variety} (not stack), that is the affine categorical quotient

\[
	\ch_G(S):=\RV_G(S)\dS G
\]
by the natural adjoint action of $G$, to distinguish it from the character stack $\uch_G(S)$. The space $\ch_G(S)$ carries a canonical Poisson structure originally due to Atiyah--Bott~\cite{Atiyah1983} and Goldman~\cite{Goldman1984}. A discrete, combinatorial construction of this structure was given by Fock--Rosly~\cite{Fock1999} using classical $r$-matrices. They construct a Poisson structure on the representation variety $\RV_G(S)$ itself, which depends on the choice of a representation of $S$ as a so-called ciliated graph. When $\cA$ is the category of modules over the formal quantum group $U_{\h}(\mf g)$ and $P$ is a gluing pattern for $S$, the algebra $a_P$ is a flat deformation of the algebra of functions on the representation variety $\RV_G(S)$. By regarding a gluing pattern $P$ for $S$ as a ciliated graph with only one vertex, the Fock-Rosly construction determines a Poisson structure on $\RV_G(S)$.
	
Fock--Rosly's construction was partly inspired by the work of Semenov-Tian-Shansky~\cite{Semenov1994} who introduced a certain dual Poisson structure on $G$, characterized by a classical version of the reflection equation, and a Poisson structure on $G\times G$ (the classical Heisenberg double) thought as a Poisson-Lie version of the cotangent bundle $T^*G$. He also constructed quantization of those structures. The relation between those Poisson structures and $\ch_G(S)$ was already noticed in~\cite{Alekseev1995}.

Quantizations of $\ch_G(S)$ were then obtained in~\cite{Alekseev1993,Alekseev1996} by, roughly, replacing classical $r$-matrices by quantum $R$-matrices, hence giving an FRT-like (in the sense of~\cite{Faddeev1990}) presentation of the sought quantization.  We prove the following:
	\begin{theorem}
		The algebra $a_P$ is a quantization of the Fock--Rosly Poisson structure on $\RV_G(S)$ associated with $P$. Its $U_{\h}(\mf g)$-invariant part is independent of $P$, and is a quantization of the canonical Poisson structure on $\ch_G(S)$.
	\end{theorem}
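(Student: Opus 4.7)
The plan is to first verify that $a_P$ is a flat formal deformation of the coordinate algebra $\cO(\RV_G(S))$, and then identify the Poisson bracket arising at first order in $\h$ with the explicit Fock--Rosly formula associated to the gluing pattern $P$.

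\textbf{Flatness and the classical limit.} The braided dual $\cO_\cA = \cO_q(G)$ is by construction a flat formal deformation of $\cO(G)$ as a $U_\h(\mf g)$-equivariant algebra, admitting an explicit PBW basis. The generators-and-relations presentation $a_P \cong (\cO_\cA)^{\ot 2g+r-1}$, with cross relations encoded by the braiding, carries a PBW-type basis that is flat in $\h$: when $\h=0$ the braiding degenerates to the symmetric-monoidal symmetry and the cross relations become ordinary commutation. A choice of gluing pattern $P$ exhibits $\pi_1(S)$ as a free group on $2g+r-1$ generators, under which the classical limit of $a_P$ is identified with $\cO(\RV_G(S))$.

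\textbf{Matching the Poisson bracket.} Within a single tensor factor, the semi-classical limit of the reflection equation (the defining relations of $\cO_q(G)$) produces the Semenov-Tian-Shansky bracket on $\cO(G)$, which is precisely the ``edge contribution'' in Fock--Rosly's recipe. For elements in different tensor factors the cross relations have the form $\Psi \cdot (x \ot y) = \Psi' \cdot (y \ot x)$, with $\Psi, \Psi'$ assembled from the universal $R$-matrix of $U_\h(\mf g)$; expanding to first order in $\h$ yields a bivector on $G \times G$ built from the classical $r$-matrix together with a symmetrization determined by the cyclic order of legs at the unique vertex of the ciliated graph corresponding to $P$. This is term-for-term identical to Fock--Rosly's formula, so the first claim follows.

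\textbf{Independence of the invariant subalgebra.} By Theorem~\ref{thm:main}, $A_S = \underline{\End}_\cA(\dist_S)$ is defined intrinsically from the pair $(S,\cA)$, independently of any gluing pattern. Under the isomorphism $A_S \cong a_P$, the $U_\h(\mf g)$-invariants correspond to $\Hom_\cA(\mathbf{1}_\cA, A_S) = \End_\cA(\dist_S)$, which is manifestly $P$-independent. Taking $\h \to 0$, this invariant subalgebra specializes to $\cO(\RV_G(S))^G = \cO(\ch_G(S))$; its induced Poisson bracket is therefore $P$-independent, and matches the Atiyah--Bott--Goldman bracket by the classical theorem of Fock--Rosly, which is precisely the statement that their family of Poisson structures on $\RV_G(S)$ descends to a single canonical structure on the $G$-quotient.

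\textbf{Main obstacle.} The substantive work lies in the second step: producing a precise dictionary between the combinatorial gluing pattern $P$ and the ciliated-graph data of Fock--Rosly, and verifying term-by-term that the first-order $\h$-expansion of the braided cross relations reproduces their $r$-matrix bivector, with the correct placement of $r$, $r_{21}$, and the symmetrized Casimir. This is lengthy but mechanical bookkeeping, analogous to the comparisons carried out for the moduli algebras of Alekseev--Grosse--Schomerus; the conceptual content is that the $E_2$-operadic combinatorics underlying factorization homology degenerates at $\h = 0$ to the Poisson-bivector combinatorics of Fock--Rosly.
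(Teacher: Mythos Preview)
Your proposal follows essentially the same strategy as the paper: reduce to the known fact that the braided dual quantizes the Semenov--Tian--Shansky structure on a single copy of $G$, and then match the semi-classical limits of the crossing morphisms $L_{ij}, N_{ij}, U_{ij}$ against the Fock--Rosly bivectors for the linked, nested, and unlinked cases. The paper actually carries out one of these cases explicitly (the nested one), computing $(N-1)/\h \bmod \h = r^{12,34} - 2t^{1,3} - 2t^{1,4}$ and identifying the resulting action on $G\times G$ with the bivector $r^{ad,ad} - 2t^{r,r} + 2t^{r,l}$; you defer this as mechanical bookkeeping, which is accurate, though it is the only place where anything is checked. One genuine difference: your argument for $P$-independence of the invariant subalgebra via the intrinsic description $\Hom_\cA(\mathbf{1}_\cA,A_S)=\End_\cA(\dist_S)$ is cleaner and works directly at the quantum level, whereas the paper leaves this point implicit and effectively leans on the classical Fock--Rosly independence theorem.
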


For suitable choices of gluing pattern $P$, our quantizations recover Alekseev's algebras.   Actions of the mapping class group of the underlying surface on quantizations of character varieties are constructed in~\cite{Alekseev1996a} directly via generators and relations. In our approach this is rather a by-product of their topological definition via factorization homology.

\subsection{Outlook}\label{outlook}
Here we collect several remarks pertaining to further directions of study springing from the current work.

\subsubsection{Computations in closed surfaces}
In the tandem paper \cite{Ben-Zvi2016}, we extend the present techniques to the setting of closed, and possibly marked, surfaces.  The key technical difficulty there is that, while punctured surfaces can be glued from disks along boundary intervals, closed surfaces require gluing along boundary annuli as well. In particular, in order to understand the algebraic data involved in sealing up punctures, or gluing in marked points, we need a convenient monadic framework for working with module categories over the factorization homology, $\int_{Ann}\cA$, of the annulus.  This is accomplished via the theory of quantum moment maps and quantum Hamiltonian reduction.

\subsubsection{Four-dimensional topological field theory}\label{TFT section}
Factorization homology fits naturally into the language of extended topological field theory and the cobordism hypothesis~\cite{Lurie2009, Scheimbauer2014}. This connection is motivational, but not technically necessary for our paper, so we will be informal in our treatment.

One well-studied source of interactions between braided tensor categories and topological field theory is provided by the identification of \emph{modular} categories -- braided tensor categories with strong finiteness -- with the possible values of extended (1,2,3)-dimensional topological field theories on the circle.  This is the Witten-Reshetikhin-Turaev (WRT) construction, originating in Chern-Simons theory.

Our work relates to a different chain of ideas, which braided tensor categories to \emph{four-dimensional} topological field theories such as the topologically twisted ${\mathcal N}=4$ super Yang-Mills theory used by Kapustin and Witten to study the geometric Langlands correspondence. In this story braided tensor categories appear as the possible values of a fully extended four-dimensional field theory on a point. 
The simplest example of such a four-dimensional theory is the four-dimensional ``anomaly theory" for the WRT theory, which was first introduced by Crane-Yetter and Kauffman ~\cite{Crane1997} (and hence will be here-after abbreviated CYK), using a modular category construction. The CYK theory is revisited in recent work of Freed and Teleman~\cite{Freeda}, who show that a modular tensor category $\cM$ defines a fully extended (in fact invertible) 4-dimensional topological field theory valued in the Morita 4-category of braided tensor categories.

The Cobordism Hypothesis~\cite{Lurie2009} establishes that fully extended $n$-dimensional topological field theories are functorially determined by the invariant assigned to an $n$-disk, an object of a higher category with suitable finiteness conditions. 
Following Lurie, Scheimbauer and Haugseng~\cite{Lurie2009, Lurie, haugseng2017, Scheimbauer2014}, an important and accessible special case is provided by the Morita theory of $E_n$-algebras (see also~\cite{Johnson-Freyd2017}).  This yields a higher category whose objects are algebras over the little $n$-disks operad; roughly speaking, the higher morphisms in each dimension $k$ are $E_{n-k}$-algebras equipped with compatible $E_{n-k+1}$ actions of the source and target $k-1$-morphism.  The Morita theory of a given $E_n$-algebra automatically satisfies the finiteness conditions necessary to define invariants of manifolds of dimension at most $n$. The invariants are precisely those given by factorization homology.

Our construction fits in this formalism, since we regard braided tensor categories as $E_2$-algebras in $\LFP$.  In order to connect to the CYK and WRT theories, we may take the complex parameter $q$ appearing in the definition of $\Repq G$ to be a root of unity; all our constructions still hold in this generality.  In this case $\Repq G$ has the modular tensor category appearing in WRT and CYK theories as a semi-simple sub quotient.

It is expected by experts (cf.~\cite{Walker}) that under mild conditions ribbon categories define ``(3+$\epsilon$)-dimensional TFTs", meaning that they have all the lower-dimensional structures of a four-dimensional theory but are not defined on 4-manifolds (as follows for example from the infinite dimensionality of the vector spaces such a theory should attach to certain 3-manifolds). This involves showing that a rigid braided tensor category $\cA$ defines a 3-dualizable object in the Morita 4-category of $E_2$-algebras in $\Rex$, and that a ribbon structure gives rise to a homotopy fixed point structure for the induced $SO(3)$-action.  This is the subject of work in progress with N. Snyder~\cite{Brochier}.

The factorization homology of a balanced tensor category $\cA$ over surfaces forms a category-valued 2-dimensional oriented topological field theory, which produces the same kind of data as the (0,1,2)-dimensional part of an oriented 4-dimensional topological field theory, but formulated in the language of Morita theory. 
To a closed surface we attach the category $\int_S\cA$. 

To a 1-manifold $M$ we attach the monoidal category $\int_{M\times \RR}\cA$, which is a stand-in in the Morita theory for its 2-category $\int_{M\times \RR}\cA\modu$ of modules.
Thus for a surface with collared boundary we have an object in the 2-category attached to the boundary: 
$$\int_S\cA\in \left( \int_{{\partial S}\times \RR}\cA\right)\modu$$ and the sewing property above provides the composition structure for the topological field theory. More generally we obtain invariants for surfaces with boundary components marked by module categories for $\int_{S^1\times \RR}\cA$. 
Finally to a point we attach the $E_2$-category $\cA$ itself, which is the Morita avatar of the 3-category
$(\cA\modu)\modu$ of $\cA$-linear 2-categories (module categories for the monoidal 2-category of $\cA$-module categories).

\subsubsection{Roots of unity}\label{roots-of-one}
The techniques developed in this paper apply to arbitrary rigid braided tensor categories.  In particular, when we consider $\Repq G$, for $q$ a root of unity, there are four distinct settings in which we can work.  The De Concini-Kac quantum group $U^{DK}_q(\mathfrak{g})$ arises from directly specializing $q$ in Serre's presentation for the quantum group~\cite{De1990}.  The resulting algebra has a large center, over which it decomposes (\'etale-locally) as a direct sum of matrix algebras.  The small quantum group, $u_q\mathfrak{g}$ appears as a quotient of $U^{DK}_q(\mathfrak{g})$ at a certain central character~\cite{Lusztig1990}; while $u_q(\mathfrak{g})$ does not admit an $R$-matrix~\cite{Lentner2015}, certain finite-degree extensions of it do.  There is Lusztig's restricted quantum group $U_q^{res}(\mathfrak{g})$, which includes divided powers of Serre generators $E_i$ and $F_i$, whose category of representations give a braided tensor category.  Finally, there is the braided tensor subcategory of tilting modules~\cite{Andersen1991} for $U_q^{res}(\mathfrak{g})$, and its quotient by negligible morphisms, the modular tensor category $\mathcal{M}_q$ of Reshetikhin-Turaev theory~\cite{Reshetikhin1991}.  The four-dimensional TFT defined by $\mathcal{M}_q$ has already been studied by Crane-Kauffman-Yetter~\cite{Crane1997}, but the others appear not to have received the same attention.  It should be very interesting to compare the constructions in the present paper in each of the different root of unity settings outlined above.

\subsubsection{Betti Geometric Langlands}\label{Betti section}

An important motivation for this series of papers is the formulation of a Betti form of the quantum geometric Langlands conjecture. The spectral side of the ``de Rham" Geometric Langlands Conjecture~\cite{Arinkin2014} involves coherent sheaves on the space of flat $G$-connections on an algebraic curve $X$ (i.e., $G$-bundles on the de Rham space $X_{dR}$ of $X$). The Betti version of the conjecture~\cite{Ben-Zvi2016a} replaces this category by coherent sheaves on the character variety $\uch_G(S)$ of the underlying topological surface (i.e., $G$-bundles on the Betti version $X_{Betti}=S$ of $X$).  

The quantization of the de Rham category is constructed using representations of affine Kac-Moody algebras -- in particular through the localization functor studied by Beilinson-Drinfeld from $LG_+$-integrable $\mathfrak g$-modules to twisted $\cD$-modules on $Bun_G(X)$ (depending on a point $x\in X$), where the twisting (or level) plays the role of inverse quantization parameter. 
Likewise the quantization of the Betti categories is constructed in this paper by assembling the localization functors $\Delta_x$ ($x\in S$) from representations of quantum groups. 
The quantum analog of the Riemann-Hilbert correspondence is provided by the Kazhdan-Lusztig equivalence between the two representation categories. We summarize the situation in the following diagram, where dotted arrows indicate equivalences after analytification, and where in the right column we indicate the underlying spaces rather than their categories of sheaves:

$$\xymatrix{
\Repq  G \ar[d]^-\sim_-{KL}  \ar[r]^-{\Delta_x} & \int_S \Repq  G \ar@{.>}[d]_-{q-RH}\ar@{~>}[r]^-{q\to 1} & \uch_G(S) \ar@{.>}[d]_-{RH}    \\
(\widehat{\mathfrak g}\modu_k)^{LG_+} \ar[r]^-{\Delta_x} & \cD_k(Bun_G(X)) \ar@{~>}[r]^-{k\to\infty} & Conn_G(X)
}
$$

The quantum Betti conjecture~\cite{Ben-Zvi2016a} relates the categories constructed in this paper with their automorphic counterparts, which are given by twisted sheaves with nilpotent singular support on $Bun_{G^\vee}(X)$. This conjecture is motivated in turn by the work of Kapustin-Witten~\cite{Kapustin2007}, in which Langlands duality is related to electric-magnetic S-duality in 4-dimensional topological field theory (${\mathcal N}=4$ supersymmetric Yang-Mills theory in the GL twist). The constructions in this paper using factorization homology is an algebraic model for this topological field theory (in contrast with the de Rham version which does not form a topological field theory, but rather depends on the complex structure of the curve).

S-duality is also expected to have another analytic (though not algebraic) manifestation, as a duality between quantum character varieties for Langlands dual groups at dual levels (roughly $q=e^{\pi i k}$ and $q^\vee=e^{-\pi i/k}$). This is a manifestation of the celebrated but mysterious modular invariance in representation theory of quantum groups:  certain aspects of the representation theory of $U_q\mf{g}$ depend naturally not on $q$ but only on the corresponding elliptic curve $\CC/{q^\ZZ}$. This modularity is expressed by Faddeev's modular double~\cite{Faddeev2000} and many subsequent works (see for example~\cite{Teschner2014,Frenkel2014} and references therein) and the Langlands duality for quantized cluster varieties of~\cite{Fock2009}.

\subsubsection{The KZ category}
The Poisson structure on character varieties can also be obtained by an appropriate reduction of a certain \emph{quasi}-Poisson structure on $\RV_G(S)$ introduced in~\cite{Alekseev2002}. This construction was extended and somewhat simplified in~\cite{Li-Bland2015} and quantized by the same authors in~\cite{Li-Bland2014}. The quantization of such a quasi-Poisson structure is an algebra internal to the so-called Drinfeld braided tensor category, which deforms $\Rep G$ as a braided tensor category, using a Drinfeld associator (see \cite{Drinfeld1990a,Drinfeld1989}, or ~\cite{Bakalov2001} for an exposition). An important theorem of Drinfeld~\cite{Drinfeld1990a,Drinfeld1989}, inspired by an earlier result of Kohno~\cite{Kohno1987b}, asserts that there exists a non-explicit equivalence between the Drinfeld braided tensor category and $\Repq G$, for $q=\exp(\h)$. While the introduction of associators makes the quantizations of \cite{Li-Bland2014} somewhat less explicit than the moduli algebras of \cite{Alekseev1993}, the role of braided tensor categories is made conceptually clearer.  We note in particular that the ``fusion'' procedure of \cite{Li-Bland2014} is very similar to the construction of $a_P$ in Section~\ref{sec:surfaces}, from copies of $\oa$.

We thus conjecture that their construction agrees with ours when $\cA$ is the Drinfeld category. This would imply in particular that the algebras they obtain are related to those of Alekseev--Grosse--Schomerus by the Kohno--Drinfeld equivalence. We also expect the representations of surface braid groups obtained from factorization homology with coefficients in the Drinfeld category to coincide with those coming from the monodromy of the KZB equations~\cite{Calaque2009,Enriquez2014}. This would give an explicit computation of the latter using quantum groups.

\subsection{Acknowledgments}

We would like to thank Jacob Lurie for first suggesting the $\Repq G$ TFT as a Betti setting for quantum geometric Langlands and for patiently answering our many questions on Grothendieck categories; Pavel Etingof and Benjamin Enriquez for sharing their ideas concerning elliptic structures on categories; Dan Freed and Kevin Walker for sharing their understanding of topological field theory and braided tensor categories and their closely related works in progress with Teleman and Vazirani, respectively.
We'd also like to thank Martin Brandenburg, David Carchedi, Steve Lack, Mike Shulman, Theo Johnson-Freyd and Ross Street, and especially Daniel Sch\"appi and Chris Schommer-Pries for many helpful comments on 2-categorical issues; John Francis, Greg Ginot, Owen Gwilliam, Claudia Scheimbauer, and Noah Snyder for discussions on factorization homology; and Anatoly Preygel for a discussion of t-structures.
We would like to thank Sam Gunningham, David Nadler, and Monica Vazirani for countless discussions surrounding geometric Langlands and related topics.  Finally, we thank the anonymous referee for many helpful suggestions and questions which have greatly improved the exposition.

The work of DBZ was partly supported by NSF grant DMS-1103525.  The work of DJ was supported by NSF grant DMS-1103778 and by the European Research Council (ERC) under the European Union's Horizon 2020 research and innovation programme (grant agreement no. 637618). DBZ and DJ would like to acknowledge that part of the work was carried out at MSRI, Grant No. DMS-1440140, as part of the Fall 2014 program on Geometric Representation Theory.

\section{Factorization homology}\label{sec:facto}
We review factorization homology following~\cite{Ayala2017,Ayala2015} to which we refer for details; see also~\cite{Ginot2015} for a survey of the theory. 
In this paper we focus on factorization homology over compact surfaces with boundaries. However we will also need to compute factorization homology on simple examples of \emph{manifolds with corners}, i.e. 2-dimensional (paracompact Hausdorff) topological spaces locally modeled on $\RR_{\geq 0}^k \times \RR^{2-k}$. Those are particular examples of stratified manifolds, on which factorization homology is well-defined thanks to~\cite{Ayala2017}. 

\subsection{Basic definitions}

Let $\operatorname{Mfld}^2_{fr}$ (resp. $\operatorname{Mfld}^2_{or}$) be the $(\infty,1)$ category associated to the topological category whose objects are framed (resp. oriented) 2-dimensional manifolds with corners, and morphisms between manifolds $M,N$ is the space $Emb(M,N)$ of framed (resp. oriented) embeddings equipped with the compact-open topology. Hence 1-morphisms are smooth embeddings, 2-morphisms are paths between embeddings (i.e. isotopies), 3-morphisms are homotopies between those and so on. The disjoint union turns those categories into symmetric monoidal categories. Let $\operatorname{Disk}^2_{\partial, fr}$ (resp. $\operatorname{Disk}^2_{\partial, or})$ denote the full subcategory generated under disjoint union by the disks $\RR^k\times \RR_{\geq 0}^{2-k}$, $0\leq k \leq 2$ equipped with their standard framing (resp. orientation). We define similarly $Disk_2^{fr}$ and $\operatorname{Disk}^2_{or}$ as the categories generated by $\RR^2$. Fix an $(\infty,1)$-symmetric monoidal category $(\cC,\bt)$.
\begin{definition}
	A $\operatorname{Disk}^2_{s,B}$-algebra in $\cC$, for $s\in \{fr,or\}$, $B\in \{\emptyset,\partial\}$, is a symmetric monoidal functor from $\operatorname{Disk}^2_{s,B}$ to $\cC$. 
\end{definition}

\begin{remark}\label{rmk:MeqA} One can show, following \cite[Example~5.2.2.15]{Lurie} and~\cite[Proposition~2.12]{Ayala2017}, that the notion of a $\operatorname{Disk}^2_{fr}$-algebra (or rather the image of $\RR^2$) coincides with what is usually called an $E_2$-algebra, or algebra over the little disk operad. Similarly a $\operatorname{Disk}^2_{or}$-algebra is an algebra over the framed little disk operad. A $\operatorname{Disk}^2_{fr,\partial}$-algebra is equivalent to the data of a triple of objects $(\cA,\cM,\cN)$, the image of the triple
	\[
		(\RR^{2},\RR\times \RR_{\geq 0},\RR_{\geq 0}^2),
	\]
	where $\cA$ is an $E_2$-algebra and $\cM,\cN$ are $\cA$-modules satisfying several conditions (e.g. the pair $(\cA,\cM)$ is an algebra over the Swiss-Cheese operad~\cite{Voronov1999}). In this paper we focus to the particular case $\cM=\cN=\cA$, i.e. all disks are sent to the same category regardless of their manifold with corner structure.
\end{remark}

We now assume following~\cite{Ayala2015}, \cite{Lurie} that $\cC$ 
is cocomplete and that for every $c \in \cC$, the functor $c \ot -$ commutes with small colimits. In this setting the following colimit uniquely defines the factorization homology with coefficients in an $E_2$-algebra $\cA\in \cC$:

\begin{definition}[\cite{Ayala2015,Lurie}]
	Factorization homology with coefficients in an $E_2$ (resp. $Disk_2$) algebra $\cA\in \cC$ is the left Kan extension of the above symmetric monoidal functor with respect to the inclusion
	\[
		\operatorname{Disk}^2_{s,\partial} \longrightarrow \operatorname{Mfld}^2_{s}
	\]
	where $s \in \{fr,or\}$, which will be denoted by
	\[
M \longmapsto \int_M \cA.
	\]
\end{definition}

	\begin{definition}[\cite{Ayala2015}]
		A \emph{collar-gluing} for a framed or oriented manifold $M$ is a continuous map
		\[
			f:M \longrightarrow [-1,1]
		\]
		to the closed interval whose restriction to $(-1,1)$ is a manifold bundle. We will write
		\[
			M \cong X^+ \underset{Y \times \RR}{\sqcup} X^-
		\]
		where $X^+=f^{-1}\left([-1,1)\right)$, $X^-=f^{-1}\left((-1,1]\right)$ and $Y=f^{-1}({0})$.
	\end{definition}

One of the main properties of factorization homology is the following:
\begin{theorem}[\cite{Ayala2015,Ayala2017}]\label{thm:excision}
	Let $\cA$ be an $E_2$ (resp. framed $E_2$) algebra in $\cC$. Then the functor $\int_{(-)}\cA$ satisfies, and is characterized by, the following properties:
	\begin{itemize}
		\item If $U$ is contractible, then there is an equivalence in $\cC$
\[
	\int_U \cA \simeq \cA.
\]
\item  A homeomorphism $M\cong Y\times \RR$, for a 1-dimensional manifold with corners $Y$, equips $\int_{M}\cA$ with a canonical $E_1$ structure (from inclusions of intervals inside a larger interval), and any two homeomorphisms induce equivalent $E_1$-algebra structures.
\item
	Let $M=X^- \underset{Y\times \RR}{\sqcup} X^+$ be a collar gluing of $X^{\pm}$ along a codimension 1 sub-manifold $Y$ with a choice of a trivialization $N\cong Y\times \RR$ of a tubular neighborhood $N$ of $Y$. Then $\int_{X^{\pm}} \cA$ are left and right $\int_{Y \times \RR}\cA$-modules respectively, and there is an equivalence of $E_0$ objects in $\cC$
	\[
		\int_M \cA \simeq \int_{X^-} \cA \underset{\int_{Y\times\RR} \cA}{\boxtimes}  \int_{X^+} \cA.
	\]
	\end{itemize}
\end{theorem}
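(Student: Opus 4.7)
The plan is to verify the three properties separately from the left Kan extension definition, and then observe that together they characterize $\int_{(-)} \cA$ among symmetric monoidal functors extending the $Disk_2^{X,\partial}$-algebra structure (this characterization is the ``$\otimes$-excision'' axiom of Ayala--Francis).

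\textbf{First property (point axiom).} I would use that for $U$ contractible, the colimit defining $\int_U \cA$ is computed over the $\infty$-category $Disk_2^{X,\partial}_{/U}$ of disk configurations in $U$. The key topological input is that $\mathrm{Emb}(\RR^2, U) \simeq \mathrm{Fr}(U) \simeq *$ for contractible framed $U$ (and more generally $\mathrm{Emb}(\sqcup_k \RR^2, U)$ is homotopy equivalent to the ordered configuration space $\mathrm{Conf}_k(U)$, suitably framed). Hence the indexing category has a terminal object (one disk filling $U$), and the colimit collapses to $\cA$. This step is standard and short.

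\textbf{Second property (monoidal structure).} For $U \cong P \times \RR$, the $\RR$-coordinate gives a strict ``stacking'' operation: the two embeddings $P\times(-\infty,0) \hookrightarrow P\times\RR$ and $P\times (0,\infty) \hookrightarrow P\times\RR$ (each reparameterized) combine to produce a morphism $\int_U\cA \otimes \int_U \cA \to \int_U\cA$ by functoriality of factorization homology on embeddings. Associativity and unitality follow from the contractibility of spaces of stackings in the $\RR$-direction; this is precisely the $E_1$-algebra structure on $\int_{P\times\RR}\cA$ coming from applying factorization homology to the $E_1$-algebra structure of $P\times\RR$ (as an object in $Mfld_2$ with the obvious algebra-object structure).

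\textbf{Third property (excision).} This is the main step, and the main obstacle. The goal is to exhibit $\int_U \cA$ as the geometric realization of the two-sided bar construction
\[
\int_U \cA \;\simeq\; \big|\, \int_{X^-}\cA \;\otimes\; \big(\int_{P\times\RR}\cA\big)^{\otimes \bullet} \;\otimes\; \int_{X^+}\cA \,\big|.
\]
My approach would be to filter the poset of disk configurations in $U$ by how many disks straddle the collar $P \times \RR$. Precisely, I would cover $U$ by open subsets $U_n = X^- \cup (P\times(-n,n)) \cup X^+$ and analyze the colimit over the resulting filtration. The configurations using disks entirely inside $X^-$ or $X^+$ contribute $\int_{X^-}\cA$ and $\int_{X^+}\cA$, respectively; the disks that cross the collar are parameterized (up to contractible choice) by a finite ordered configuration on $P \times \RR$, which is exactly a simplex in the bar construction. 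The nontrivial content is that the resulting double colimit can be reordered: first take the colimit over disks within each region, yielding the terms of the bar construction, and then take the simplicial realization.

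\textbf{Uniqueness / characterization.} Finally, any symmetric monoidal functor $F$ out of $Mfld_2^{X}$ satisfying the three properties agrees with $\int_{(-)}\cA$ on disks (by the first property) and transforms collar gluings into relative tensor products (by the third), so by the standard handle decomposition of a surface with corners into finitely many disks glued along half-disks, $F$ is forced to agree with $\int_{(-)}\cA$. I expect the hardest technical point to be verifying coherence of the iterated bar construction under varying collar gluings; in practice I would quote the precise formulation in \cite{Ayala2012,Francis2012} rather than rederive the simplicial machinery from scratch.
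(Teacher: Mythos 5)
The paper does not prove this theorem at all: it is stated as a recollection of results from the literature, with the reader referred to Ayala--Francis and Lurie (``mostly following~\cite{Ayala2012,Francis2012} to which we refer for details''), so there is no in-paper argument to compare yours against. Your sketch is essentially the standard proof from those references: the terminal-object/contractibility-of-$\mathrm{Emb}^{fr}(\RR^2,U)$ argument for the point axiom, stacking in the $\RR$-direction for the $E_1$-structure, the two-sided bar construction for excision, and handle decompositions for the uniqueness clause. Two remarks on where your outline leaves real work undone (which you partly acknowledge). First, the heart of the excision step is not the filtration itself but the cofinality statement that lets you exchange the double colimit -- i.e.\ that the simplicial diagram of ``disks sorted by collar position'' is final in $Disk^{X,\partial}_{2/U}$; this is the technical core of the Ayala--Francis proof and cannot be waved through, so quoting it is the right call. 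Second, both the existence of the relative tensor product as a geometric realization and the validity of the exchange require that $\bt$ in the target commute with sifted colimits -- this is exactly the hypothesis ``whenever defined'' in the statement, and in the paper it is supplied for $\Rex_\K^\bt$ by the conditions (*) verified in Proposition~\ref{excision}; your proof should say explicitly that it is invoking this. With those two citations made precise, your outline is a faithful account of how the theorem is established in the sources the paper relies on.
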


\begin{remark}
	Along the lines of Remark~\ref{rmk:MeqA}, in the particular case considered in this paper, $\int_M \cA$ depends only on the manifold with boundary underlying $M$, not of its manifold with corners structure. This will allow us to see surfaces with circle boundaries as being obtained by gluing closed disks whose boundary is divided into several intervals, and then smoothing the remaining corners. 
\end{remark}
\begin{remark}
Let $S$ be a surface with, say, a single, circular boundary, and let $\tilde S$ be the manifold with corners obtained by subdividing the boundary of $S$ into $n$ intervals. Then, providing that the framings match, or that $\cA$ is a framed $E_2$-algebra, $\int_{\tilde S}\cA$ has a natural structure of a module over
	\[
		\int_{\sqcup_{n} \RR^2}\cA \simeq \cA^{\boxtimes n}.
	\]
	On the other hand, $\int_S \cA$ is naturally a $\int_{S^1\times \RR}\cA$-module. By the previous remark, we have an equivalence 

\[
	\int_{S}\cA \simeq \int_{\tilde S} \cA,
\]
hence the marking induces an $\cA^{\boxtimes n}$-module on $\int_S \cA$ as well. It is easily seen that this module structure is isomorphic to the one obtained via the composition
\[
	\cA^{\boxtimes n}\longrightarrow \cA \longrightarrow \int_{S^1\times \RR} \cA.
\]
\end{remark}

\subsection{Pointed Structure}\label{sec:pointed}

An important additional feature of factorization homology is that it is a \emph{pointed} theory:  the invariant assigned to an $n$-manifold $M$ by an $E_n$-algebra comes equipped with a canonical $E_0$-structure, i.e. a morphism from the unit of the target category $\cC$, constructed as follows.  Let $\emptyset$ be the empty manifold. As $\emptyset$ is the unit for the disjoint union, we have $\int_\emptyset\cA = \un_{\cC}$.  Moreover, $\emptyset$ is an initial object in $\operatorname{Mfld}^2_{or/fr}$; for any manifold $M$, there is a unique embedding:
\[
\emptyset \longrightarrow M.
\]
Its image through factorization homology provides a distinguished morphism
\[
	\un_{\cC} \longrightarrow \int_M \cA.
\]
In the case that $\cC$ is itself some collection of linear categories (as in the next section), we have that $\int_M\cA$ is a category, $\mathbf{1}_{\cC}$ is a category of vector spaces, so that this pointed structure produces a distinguished object in it, denoted $\cO_{\cA,M}$, the image of the one-dimensional vector space. Note that the excision property being an equivalence of $E_0$-objects means that it is compatible with the distinguished object, in the sense that if $M=X^+ \sqcup_{Y\times \RR}X^-$ is a collar-gluing then there is a canonical isomorphism
	\[
		\cO_{\cA,M}\cong\cO_{\cA,X^+}\bt_{\int_{Y\times\RR}\cA}\cO_{\cA,X^-}
	\]
in $\cA$.
	Distinguished objects play a key role in our main result, so we will detail some of their properties in Section~\ref{sec:dist}.

\section{Categorical settings}\label{sec:category}
We now describe the categorical setting in which we will operate.   It is important for constructions such as factorization homology to work in an ambient category (or $\infty$-category) with enough colimits and a symmetric monoidal structure preserving those colimits.   We will largely work in the ``discrete" setting of $1$-categories, so that only a passing knowledge of $\oo$-categories is necessary to read the paper.  However, we must address the existence of co-limits in various 2-categories of categories.  We then recall the basic notions of tensor categories, braided tensor categories, module categories, etc. 

For application in representation theory, we moreover require the notion of $\K$-linear categories.    A category is \emph{$\K$-linear} if it is enriched over the category of $\K$-vector spaces (note there are no finiteness assumptions on Homs or lengths of objects). We will always work with categories enriched over some field $\K$, usually $\K=\CC$, and without further comment except when technically necessarily (though many constructions work with suitable care over more general rings).

It is known that the familiar setting $\Ab$ of small $\K$-linear abelian categories does not possess these features needed for factorization homology, since the Deligne tensor product of two abelian categories is no longer abelian in general~\cite{Franco2013}.  In this section we will recall four convenient categorical settings where this framework is well-defined, and discuss the compatibility between them.  Standard references for this section are~\cite{Aganagic2011,Blumberg2013,Kelly1982,Makkai1989}. We will consider the following (2,1)-categories:

\begin{itemize}
\item $\Rex$, of finitely co-complete $\K$-linear categories with right exact functors,
\item $\Pres$, of presentable $\K$-linear categories with cocontinuous functors,
\item $\LFP$, of compactly generated presentable categories with compact and cocontinuous functors (also known as locally finitely presentable, $\mathbf{LFP}_\K$),
\item $\Gr$, of Grothendieck $\K$-linear abelian categories which admit exact filtered colimits and a small generator (these are in particular presentable), see Remark~\ref{derived}.
\end{itemize}

The 2-morphisms in each case are the $\K$-linear natural isomorphisms.  We note in passing that since each class of categories is defined by requiring closure under colimits of a certain shape, and since colimits in functor categories are computed pointwise in the target, it follows that each of these $2$-categories admits internal Homs, i.e. the Hom between two categories in $\Rex$ (resp. $\Pres$, $\LFP$, $\Gr$) is again a category in $\Rex$ (resp. $\Pres$, $\LFP$, $\Gr$).

As motivating examples:  for a $\K$-algebra $A$, the categories $A\modu_{fd}$, of $A$-modules of finite-dimension over $\K$, and $A\modu_{fp}$, of finitely presented $A$-modules, both live in $\Rex$.  On the other hand, the categories $A\modu_{lf}$, of $A$-modules which are locally finite dimensional (i.e. every vector generates a finite-dimensional submodule), and $A\modu$, of all left $A$-modules, both live in $\LFP$ and in fact in $\Gr$.

{\bf Terminology.} By a {\em{2-category}} we will always mean a (non-strict) $(2,1)$-category, in other words, we will not use any non-invertible 2-morphisms. Equivalently, this means an $(\infty,1)$-category with 1-truncated mapping spaces.  In particular, the notion of colimits in the $(\infty,1)$ setting coincides with the 2-categorical notion, and with the notion of a homotopy colimit of categories (See Section 4.2.4 of \cite{Lurie2009a}).  We will therefore use these notions interchangeably.

\subsection{An equivalence between $\Rex$ and $\LFP$}
Recall that a category is presentable (also known as locally presentable) if it is accessible (generated under colimits by a small subcategory) and cocomplete (closed under small colimits -- in fact this implies it is complete as well). We denote by $\Pres$ the 2-category of presentable $\K$-linear categories with colimit preserving (aka cocontinuous) functors and natural isomorphisms.

Presentable categories provide a very flexible setting for algebra:  tensor products of presentable categories are again presentable; so are colimits of presentable categories; the adjoint functor theorem provides right adjoints to colimit preserving functors, to which we can then apply techniques such as Barr-Beck monadicity to describe categories explicitly as categories of (co)modules. In practice the presentable categories we encounter on punctured surfaces will all be (Grothendieck) abelian categories. (See also Remark~\ref{derived}.)

  Recall that an object $c\in \cC$ is compact (resp. compact projective) if $Hom(c,-)$ preserves filtered colimits (resp. arbitrary colimits), and compact-projective if it preserves arbitrary colimits.  We denote by $\cC_c$ the full subcategory of compact objects.  The ind-completion $\ind(\cC)$ of a small category $\cC$ is the universal category containing all filtered colimits of diagrams in $\cC$.  It may be constructed as a category of filtered diagrams in $\cC$.

A category $\cC$ is said to be compactly generated if any object is a filtered colimit of compact objects (equivalently, we can identify $\cC$ with the ind-completion of its full subcategory of compact objects).

We denote by $\LFP\subset \Pres$ the subcategory of compactly generated $\K$-linear presentable categories and {\em compact} functors, i.e., functors preserving compact objects or equivalently (by the adjoint functor theorem) possessing right adjoints preserving filtered colimits.

\begin{remark} We will adopt the usual convention of referring to objects in $\ind\cC$ for a  category $\cC$ as ind-objects in $\cC$.  For instance, an ind-algebra in a monoidal category $\cC$ really means an algebra object in $\ind\cC$ with respect to the co-continuous extension of the monoidal structure.
\end{remark}

A category is \emph{finitely co-complete} if it admits finite colimits. We let $\Rex$ denote the 2-category of essentially small, finitely cocomplete categories with morphisms \emph{right exact} functors (i.e. functors preserving finite co-limits) and natural isomorphisms. (Recall that a category is essentially small if it is equivalent to a small category.)
Note that since abelian categories are finitely cocomplete, there's a full subcategory of $\Rex$ consisting of small abelian categories, with right exact functors.  

Given a $\Rex$ category $\cC$, we may consider its ind-completion $\ind(\cC)\in \Pres$, which will be $\LFP$ by construction. Given a $\Rex$ functor $F:\cC\to\cD$, we may consider the ind-extension $\ind{F}:\ind(\cC)\to\ind(\cD)$, which will be co-continuous and compact, i.e., will preserve the subcategory of compact objects. Conversely, given a presentable category $\cC\in \Pres$, we may consider its subcategory $\comp(\cC)$ of compact objects, an object of $\Rex$. Restricting then to compactly generated categories and compact functors, we find the operations $\ind$ and $\comp$ define an equivalence of (2,1)-categories,$$\xymatrix{\ind:\Rex \ar[r]^-\sim &\ar[l] \LFP:\comp}.$$

It is convenient to go back and forth between the concrete setting of small categories $\Rex$ and the flexible setting of presentable ones $\Pres$: the monadic techniques we will develop take $\Rex$ categories as output, but most naturally produce concrete descriptions of their ind-completions in $\Pres$. Thanks to the above equivalence, we will move between the settings interchangeably.

We also record here the following proposition, which was explained to us by Daniel Sch\"appi:

\begin{proposition}\label{Schaeppi}: If $\cC$ is an abelian category and $T$ is a right exact monad on $\cC$, then $\cD:=T-mod_\cC$ is abelian. 
\end{proposition}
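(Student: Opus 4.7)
The plan is to exhibit the forgetful functor $U\colon T\modu \to \cC$ as an exact, conservative functor to the abelian category $\cC$, and then transfer the remaining abelian axioms (every mono is a kernel, every epi is a cokernel) along it. The main work is verifying that $T\modu$ has all finite limits and colimits and is additive; once that is established, the normal-mono/normal-epi conditions follow formally.

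First I would verify that $T\modu$ is additive with finite limits and colimits. Limits are handled by the standard Eilenberg--Moore result that $U$ creates all limits existing in $\cC$; kernels, finite products, and a zero object lift (the zero object being $T(0)$, and $T(0)=0$ because the right exact functor $T$ preserves the initial object). For colimits I invoke the classical creation criterion: $U$ creates any colimit that is preserved by both $T$ and $T^2$. Since $T$ is right exact, so is the composite $T^2$, so $U$ creates all finite colimits; in particular cokernels and finite coproducts exist in $T\modu$. A right exact functor between abelian categories is automatically additive, so the $\Ab$-enrichment on $\cC$ restricts to one on $T\modu$, and the existence of finite biproducts then upgrades $T\modu$ to an additive category.

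Next, $U$ is exact: it preserves all limits as a right adjoint to the free functor $F = T(-)$, and preserves finite colimits by the creation statement above. It is also conservative, which is the standard feature of monadic forgetful functors — a morphism of $T$-modules whose underlying map in $\cC$ admits an inverse automatically has that inverse compatible with the $T$-actions. With these two properties in hand, take a monomorphism $f\colon M \to N$ in $T\modu$, form its cokernel $c\colon N \to C$, the kernel $k\colon K \to N$ of $c$, and the canonical factorization $\iota\colon M \to K$. Applying $U$ gives the analogous diagram in $\cC$; since $\cC$ is abelian and $Uf$ is a monomorphism (preserved by the right adjoint $U$), the induced $U\iota$ is an isomorphism, and conservativity of $U$ lifts this to $\iota$ being an isomorphism in $T\modu$. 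The dual argument, using that $U$ preserves epis as a consequence of preserving cokernels and zero, shows every epi in $T\modu$ is a cokernel.

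The principal obstacle is the existence of cokernels (and more generally finite colimits) in $T\modu$, which is exactly where the right exactness of $T$ is essential — without it the creation criterion fails and $T\modu$ can fail to even have cokernels, let alone be abelian. Every other step is a formal consequence of the monadicity of $U$ together with its exactness and conservativity.
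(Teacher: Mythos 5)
Your proof is correct and follows essentially the same route as the paper's: the forgetful functor creates all limits and (because $T$, hence $T^2$, is right exact) all finite colimits, it is conservative, and exactness plus conservativity transfer the abelian axiom from $\cC$ to $T\modu$. The paper states the last step via the comparison morphism $\operatorname{coker}(\ker f)\to\ker(\operatorname{coker} f)$ rather than via normality of monos and epis, but these are equivalent, and your write-up simply supplies more of the standard details (additivity, $T(0)=0$, the creation criterion) that the paper leaves implicit.
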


\begin{proof} The forgetful functor $\cD\to\cC$ creates all limits that exist in $\cC$ (for any monad), in particular finite ones, and all finite colimits (since $T$ preserves finite colimits). Since the forgetful functor is also conservative and $\cC$ is abelian, it follows that the comparison morphism 
$$coker(ker(f)) \longrightarrow ker(coker(f))$$
is an isomorphism in $\cD$, hence $\cD$ is abelian.
\end{proof}

\subsection{The Deligne-Kelly tensor product}

For $\K$-linear categories $\cC, \cD, \cE$, let $\operatorname{Bilin}(\cC\times \cD,\cE)$ denote the category of $\K$-bilinear functors from $\cC\times \cD$ to $\cE$, preserving finite colimits in each variable separately, and with natural isomorphisms as morphisms.

\begin{definition} The \emph{Deligne-Kelly tensor product} $\cC\bt\cD$ of $\cC, \cD \in \Rex$ is 
uniquely characterized by the natural equivalences,
$$\Rex[\cC\bt \cD,\cE] \simeq \operatorname{Bilin}(\cC\times \cD,\cE).$$
\end{definition}
In~\cite{Kelly1989}, it is shown that Kelly tensor product equips $\Rex$ with the structure of a symmetric closed monoidal (2,1)-category, in particular we have a further equivalence:
$$\Rex[\cC\bt \cD,\cE] \simeq \Rex[\cC,\Rex[\cD,\cE]].$$
We denote this symmetric closed monoidal $(2,1)$-category by $\Rex^\bt$.

The Kelly tensor product extends to $\LFP$, where it is characterized by the analogous universal property (with respect to functors preserving colimits in both factors), and the functor $\ind$ extends to an equivalence $\Rex^\bt\simeq\LFP^\bt$ of symmetric monoidal $(2,1)$-categories.

\begin{remark} It is shown in~\cite{Franco2013} that the Deligne tensor product of abelian categories -- when it exists -- coincides with the Kelly tensor product, but that the former may not exist in general.  This, essentially, is the reason to work with $\Rex$ and $\LFP$, rather than $\Ab$. 
\end{remark}

\begin{proposition}\label{excision} The symmetric monoidal $\infty$-category $\Rex^\bt\simeq\LFP^\bt$ is closed under small $2$-colimits, and the tensor product preserves $2$-colimits in each factor separately.
\end{proposition}
\begin{proof}
In~\cite{Blackwell1989} it is shown that $\Rex$ (equivalently, $\Lex$) is the category of 2-modules in $\operatorname{\mathbf{Cat}}$ of a finitary 2-monad $T$, and as such is closed under arbitrary bicolimits. Because $\Rex^\bt$ is \emph{closed} monoidal, the functor $\cC\bt -$ has a right adjoint, and therefore commutes with arbitrary colimits. (Recall from Section~\ref{sec:category} that colimits in the 2-categorical setting are identified with their $\infty$-categorical versions.)
\end{proof}

In particular it follows that $\Rex$ (and hence $\LFP$, by the equivalence $\Rex\simeq\LFP$ of symmetric monoidal $(2,1)$-categories) satisfies the conditions (*) of~\cite{Ayala2015} for the definition of factorization homology (for which a much smaller class of colimits is required, namely the sifted ones).

\subsection{Tensor and braided tensor categories}\label{sec:tensorcat}

It is well known that $E_1$-, $E_2$- and framed $E_2$-algebras, respectively, in $\cat^\times$ are equivalent to monoidal, braided monoidal and balanced braided monoidal categories (see~\cite[Example 5.1.2.4]{Lurie} and~\cite[Chap. 6]{Fresse2017}).  In this section we consider the $\K$-linear analogs of these structures, as well as the corresponding notion of rigidity (for a general introduction to rigid tensor categories, see e.g.~\cite{Etingof2015}).

\begin{definition} A \emph{tensor category} in $\LFP$ is an $E_1$-algebra $\cA$ in $\LFP$.  Similarly a \emph{braided tensor} category in $\LFP$ is an $E_2$-algebra in $\LFP$.\end{definition}

This definition is a compact formulation of the traditional notions of tensor and braided tensor categories. Since $\LFP$ is a $2$-category, the data of an $E_1$-algebra consists of a category, a functor of tensor product, and an associator natural isomorphism, which satisfies the so-called ``pentagon equation".  We will follow the usual convention of dropping explicit mention of associators in formulas, for clarity of exposition.  We will denote by $\cA^{\ot-op}$ the tensor category $\cA$ with opposite tensor product, so $V\ot^{op} W = W\ot V$.

\begin{definition}\label{def:rigid} A tensor category $\cA$ in $\LFP$ is \emph{rigid} if all compact objects of $\cA$ are left and right dualizable.  A braided tensor category is rigid if its underlying tensor category is rigid.\end{definition}

For later use, we note here that the braiding on a braided tensor category $\cA$ endows the iterated tensor functors,
$$T^k: \cA^{\bt k}\to \cA,$$

$$a_1\bt\cdots \bt a_k \to a_1\ot\cdots\ot a_k,$$
with the structure of a tensor functor, via the following ``shuffle" braiding:
\begin{align}\label{eq:tensor}
J_{\mathbf{a},\mathbf{b}}: a_1\ot\cdots\ot a_k \ot b_1\ot \cdots \ot b_k \xrightarrow{\simeq} a_1\ot b_1\ot \cdots\ot a_k\ot b_k
\end{align}
$$J_{\mathbf{a},\mathbf{b}} = \sigma_{a_k,b_{k-1}}\circ\cdots\sigma_{a_3\ot\cdots\ot a_k,b_2}\circ\sigma_{a_2\ot\cdots\ot a_k,b_1}.$$

\begin{remark}\label{rmk:balanced} In order to define a $Disk^2_{or}$-algebra in $\Rex$, we require $\cA$ to be equipped with a balancing~\cite{Salvatore2001} i.e. an automorphism $\theta$ of the identity functor on $\cA$, satisfying
$$\theta_{V\ot W} = \sigma_{W,V}\sigma_{V,W}(\theta_V\ot \theta_W).$$
It is well-known~\cite{Selinger2011} that, having already assumed $\cA$ is rigid, this is equivalent to equipping $\cA$ with a pivotal structure.
\end{remark}

\begin{definition} Let $\cA$ be a tensor category in $\LFP$.
\begin{enumerate}
\item A (right) $\cA$-module category $\cM$ for a tensor category $\cA$ in $\LFP$ is a category $\cM\in\LFP$, together with an action functor in $\LFP$,

\begin{equation*}
act_\cM: \cM\bt\cA \to\cM,
\end{equation*}
satisfying standard associativity (pentagon) axioms.  We will abbreviate $\act_M(m \bt X)$ by $m \ot X$.

A left module category is defined similarly\footnote{Henceforth, ``module category" will connote ``right module category"}.
\item An $(\cA,\cB)$-bimodule category for tensor categories $\cA$ and $\cB$ is, equivalently, a right module category for $\cA^{\ot-op}\bt\cB$ or a left module category for $\cA\bt\cB^{\ot-op}$.
\item For $m\in\cM$, we denote by $\act_m$ the action functor on $m$,
\begin{align*}\act_m:\cA\to\cM\\
a\mapsto m\ot a.\end{align*}
This functor has a right adjoint,
$$\act_m^R: \cM\to\cA.$$
\item  For any $n\in \cM$, we denote by $\underline{\Hom}_\cA(m,n)$ the \emph{internal homomorphisms} from $m$ to $n$, which by definition is the object $\act_m^R(n)\in \cA$. For any triple of objects $m,n,p \in \cM$ there is a well-defined composition map in $\cA$ (see e.g.~\cite{Etingof2015})
		\[
			\iHom(n,p)\ot \iHom(m,n)\longrightarrow \iHom(m,p).
	\]
\item We denote by $\underline{\End}_\cA(m):=\iHom(m,m)=\act_m^R(\act_m(\mathbf 1))$ the \emph{internal endomorphism algebra of $m$}, which carries a natural algebra structure.
\end{enumerate}
\end{definition}

\begin{remark}
To avoid confusion, let emphasize that it is built into our assumptions for tensor and braided tensor categories, module categories, and their module functors is the requirement that all structural functors are compact-preserving.
\end{remark}

\begin{proposition}\label{prop:adjoint}
Let $\cA$ be a rigid tensor category in $\LFP$, let $\cM$ and $\cN$ be $\cA$-module $\LFP$ categories,
and let $F:\cM\to\cN$ an $\cA$-module $\LFP$ functor admitting a right 
 adjoint $F^R$ as a plain functor. Then $F^R$ admits a canonical $\cA$-linear structure.
\end{proposition}
\begin{proof}
The assertion appears as Corollary 2.13 of \cite{Douglas2014} (and as an exercise to the reader in~\cite[3.3]{Etingof2004}); see also~\cite[Lemma 3.5]{Ben-Zvi2009} in the dg setting. 
 The right adjoint to an $\cA$-module functor automatically commutes with the $\cA$-module structure in a lax sense, i.e., up to natural transformation. The right adjoint also automatically commutes strictly with the action of dualizable objects of $\cA$, which can be moved across $Hom$ pairings. Finally the rigidity of $\cA$ allows us to write objects of $\cA$ as filtered colimits of dualizable objects, whence the Proposition follows.\end{proof}

\begin{proposition}\label{prop:adMult} Let $\cA$ be a rigid tensor category in $\LFP$.  Then the tensor product functor $T:\cA\bt\cA\to\cA$ has a co-continuous right adjoint.
\end{proposition}

\begin{proof}  
	By assumption, $T$ is cocontinous so it admits a right adjoint $T^R$ which is a priori only a linear functor. We need to prove that $T^R$ is cocontinuous if $\cA$ is rigid. Since the tensor product in $\cA$ is associative, $T$ has a canonical $\cA$-bimodule structure, and since $\cA$ is rigid, $T^R$ is an $\cA\boxtimes \cA^{\ot-op}$-module functor. Hence for any $X\in \cA$,
	\[
		T^R(X)=T^R((X\boxtimes \un_\cA)\ot \un_\cA)\cong (X\boxtimes \un_\cA)\ot T^R(\un_\cA).
	\]
	Since the tensor product of $\cA\boxtimes \cA^{\ot-op}$ is cocontinuous in each variable, $T^R$ is cocontinuous.
\end{proof}

\begin{proposition}\label{prop:adAct}
Let $\cA$ be a rigid tensor category, and $\cM$ a right $\cA$-module. Then the action functor
\[
\act:\cM\boxtimes \cA\rightarrow \cM
\]
has a cocontinuous right adjoint.
\end{proposition}
\begin{proof}
The proposition and its proof carry over without modification from the corresponding assertion in the dg category setting,	
Proposition D.2.2 of~\cite{Gaitsgory2015}. Namely one can write down explicitly a cocontinuous right adjoint $\act^R$ and its adjunction data. The functor $\act^R$ is given as a composition
$$\xymatrix{\cM\ar[rr]^-{Id_\cM\boxtimes \un_\cA }&& \cM\boxtimes \cA\ar[rr]^-{ Id_\cM\boxtimes T^R}&&
\cM\boxtimes \cA\boxtimes \cA\ar[rr]^-{\act\boxtimes Id_\cA}&& \cM\boxtimes\cA}$$ of cocontinuous functors, hence is cocontinuous.
\end{proof}

\begin{definition}
The \emph{relative Kelly tensor product} $\cM\bt_\cA\cN$ of left and right module categories $\cM$ and $\cN$ for a tensor category $\cA$ is defined as the colimit of the infinite 2-sided bar construction for $\cM$ and $\cN$ -- i.e., the geometric realization of the simplicial category 
$$
 \xymatrix{\cdots 
 \ar[r]<.75ex> \ar[r]<.25ex> \ar[r]<-.25ex> \ar[r]<-.75ex> &
 \cM\bt\cA\bt\cA\bt\cN\ar[r]<.5ex> \ar[r] \ar[r]<-.5ex> &
 \cM\bt\cA\bt\cN\ar[r]<.25ex> \ar[r]<-.25ex>&
 \cM\bt \cN}
$$
\end{definition}

\begin{remark}  
Thus, we take as the definition of the relative tensor product the standard $\infty$-categorical definition of relative tensor product of left and right modules over an algebra object in a symmetric monoidal $\infty$-category.  
Existence of relative tensor products in $\LFP^\bt$ is an easy consequence of existence of Kelly tensor products, and closure of $\LFP^\bt$ under colimits. It is not hard to show, along the lines of MacLane's coherence theorem, that the resulting colimit is can be calculated much more concretely:  essentially, because $\LFP$ is only a 2-category, the infinite bar construction strictifies after the second step. 

The resulting construction may be characterized as the universal source for {\em balanced} functors out of $\cM\bt\cN$,
$$\LFP[\cM\bt_\cA \cN,\cE] \simeq \operatorname{Bal}_\cA(\cM\bt \cN,\cE).$$
Recall from~\cite{Etingof2010}, Definition 3.1, that a functor $F:\cM \bt \cN\to \cE$ (where $\cM, \cN$ are right and left $\cA$-module categories and $\cE\in\LFP$) is called \emph{$\cA$-balanced} when equipped with natural isomorphisms,
\begin{equation}B_{m,X,n}:F(m\ot X \bt n) \cong F(m \bt X\ot n),\label{eqn:rel-tens}\end{equation}
for $m\in\cM, X\in\cA, n\in\cN,$ satisfying certain coherences.  We denote by $\operatorname{Bal}_\cA(M\bt N, \cE)$ the category of balanced functors.  Often, e.g. in \cite{Etingof2010}, formula \eqref{eqn:rel-tens} is taken as the definition of the relative tensor product $\cM\bt_\cA \cN$.  See~\cite{Douglas2014} for an explicit study of balanced tensor products.

\end{remark}

\begin{remark}
	The relative tensor product of two $E_0$, i.e. pointed, modules is canonically again pointed via
	\[
\Vect\simeq\Vect \bt \Vect \rightarrow \cM \bt \cN \rightarrow \cM\bt_\cA \cN
	\]
	where the first arrow is the absolute tensor product of the pointings of $\cM$ and $\cN$ and the second is the canonical functor.
\end{remark}

\begin{proposition}\label{all-abelian} Let $\cA$ be a rigid tensor category in $\LFP$, and let $\cM,\cN$ be module categories.
Then the right adjoint of the canonical functor $$\mu:\cM\bt\cN \to \cM\bt_\cA\cN$$ is monadic.
\end{proposition}

\begin{proof}

The proof is a variant of the argument of Theorem 4.7 in~\cite{Ben-Zvi2010}, in which one turns colimits of presentable
categories with continuous functors to limits along their right adjoints, using the adjoint functor theorem, specifically Corollary 5.5.3.4 in~\cite{Lurie2009b}. These limits can be calculated as limits of plain categories, since the forgetful functor
from presentable categories (with either left or right adjoints) preserves small limits, see Proposition 5.5.3.13, Theorem 5.5.3.18 in~\cite{Lurie2009b}.

In more detail, recall that  $\cM\bt_\cA \cN$ can be realized as the colimit in $\Pres$ of the simplicial diagram given by the two-sided bar construction, where the simplices are iterated tensor products $\cM\bt\cA \cdots\bt\cA\bt\cN$. Such a colimit of presentable categories with left adjoints
can be calculated as the limit in the opposite $\infty$-category $\Pres^R$ of presentable categories with right adjoints over the corresponding cosimplicial diagram of right adjoints. By Propositions~\ref{prop:adMult} and~\ref{prop:adAct}, all functors appearing in the two-sided bar construction have cocontinuous right adjoints, so that we are in fact calculating a limit over a cosimplicial diagram in $\Pres$. In particular it follows that $\mu^R$ is itself a morphism in $\Pres$, i.e., cocontinuous (as well as continuous by virtue of being a right adjoint).   
The limit in turn is identified with the corresponding limit in the 2-category of categories, i.e., with the category of compatible collections of objects in the cosimplices. 
However forgetting from compatible collections to objects of the 0-th term is a conservative functor: a map of compatible collections (whose objects are determined by the 0-th term) is an isomorphism if and only if it is so on the 0-th term. Thus $\mu^R$ is continuous, cocontinuous and conservative, and thus monadic.
\end{proof}

\begin{corollary} If $\cA,\cM$ and $\cN$ are moreover abelian then so is $\cM\bt_\cA\cN$.\end{corollary}\label{rel-tens-abelian}
\begin{proof}
Since we are working with compact functors, the monad from the proposition is itself colimit preserving. Thus Proposition~\ref{Schaeppi} applies, and when $\cA,\cM$ and $\cN$ are abelian so is $\cM\bt_\cA \cN$.
\end{proof}

\section{Barr-Beck reconstruction of module categories}\label{sec:barr-beck}
\subsection{Barr--Beck Theorem}


A key tool in our computation is the well-known yoga of monads (i.e. of unital algebra objects in categories of endofunctors); in particular, monads arising from adjunctions of tensor functors and module functors.  Many of these results are straightforward extensions to $\LFP$ of the work of Ostrik~\cite{Ostrik2003,Etingof2004}, in the setting of fusion categories, and~\cite{Douglas2013} for general finite abelian categories; we are grateful to Ostrik and Snyder for many explanations.

We begin by recalling Beck's monadicity theorem.  Let $(L,R): \cC \xrightleftharpoons[R]{L} \cD$ be an adjoint pair of functors.  The composition $T=R\circ L$ is naturally a monad on $\cC$ via the adjunction unit and counit,
$$ \eta: \id_\cC\to R\circ L,\qquad m: R\circ L\circ R \circ L \xrightarrow{\id\ot\epsilon\ot\id} R\circ L.$$

\begin{definition} We denote by $T\modu_\cC$ \emph{the category of $T$-modules\footnote{These are sometimes called $T$-algebras, instead of $T$-modules.} in $\cC$}:  objects are pairs $(X, f)$ of an object $X\in\cC$, and a morphism $f:T(X)\to X$; morphisms from $(X,f)$ to $(Y,g)$ are those $h:X\to Y$ making the obvious diagram commute.
\end{definition}

We obtain a functor, $\widetilde{R}: \cD \to T-mod_\cC$, sending $A\in \cD$ to $R(A)\in \cC$, with its canonical $T$-action:
$$act: R\circ L\circ R(A) \xrightarrow{\id\ot\epsilon} R(A).$$

\begin{theorem}[Barr-Beck theorem; see~\cite{MacLane1998} p. 147-150] \label{rexmonad} The functor $\widetilde{R}$ is an equivalence if, and only if:
\begin{itemize}
\item $R$ is conservative/reflects isomorphisms, i.e. if $f:X\to Y$ in $\cD$, is such that $R(f)$ is an isomorphism, then $f$ is an isomorphism.
\item $\cD$ has coequalizers of $R$-split parallel pairs (those parallel pairs of morphisms in $\cD$, which $R$ sends to pairs having a split coequalizer in $\cC$), and $R$ preserves those coequalizers.
\end{itemize}
\end{theorem}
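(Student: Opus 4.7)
The reverse direction of the theorem is essentially formal: if $\widetilde R$ is an equivalence, then $R$ factors (up to this equivalence) through the forgetful functor $U: T\modu_\cC \to \cC$, which is manifestly conservative and creates $U$-split coequalizers, so both conditions transfer to $R$. The substance of the theorem is therefore the forward direction, which I would prove by constructing an explicit inverse $S: T\modu_\cC \to \cD$ and checking $S \circ \widetilde R \simeq \id$ and $\widetilde R \circ S \simeq \id$.

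For the construction, fix a $T$-module $(X,f)$ with $f: TX = RLX \to X$. I would form the parallel pair in $\cD$
$$LRLX \rightrightarrows LX$$
whose arrows are $L(f)$ and $\epsilon_{LX}$, respectively. Applying $R$ yields the pair $Tf, m_X: T^2 X \rightrightarrows TX$, and a classical diagram chase using the module axiom $f \circ m_X = f \circ Tf$ and the triangle identities shows that $f$ itself is a split coequalizer of this pair in $\cC$, with the splittings provided by $\eta_X: X \to TX$ and $\eta_{TX}: TX \to T^2X$. Hence the original pair in $\cD$ is $R$-split, so by hypothesis it admits a coequalizer, which I define to be $S(X,f)$, and which is preserved by $R$.

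Verifying that $S$ is inverse to $\widetilde R$ would proceed in two steps. First, $R \circ S(X,f) \cong X$ as a $T$-module, because $R$ preserves the coequalizer defining $S(X,f)$ and $f$ is the coequalizer in $\cC$. For the reverse composition, I would invoke the standard fact that for any $A \in \cD$, the pair $\epsilon_{LRA}, LR\epsilon_A: LRLRA \rightrightarrows LRA$ has $\epsilon_A: LRA \to A$ as its coequalizer, because applying $R$ yields an $R$-split pair with split coequalizer $\id_{RA}$; conservativity of $R$ then identifies $A$ with $S(\widetilde R A)$.

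The main obstacle is essentially bookkeeping: verifying the diagrammatic identities needed to produce the split coequalizer in $\cC$, and checking naturality of $S$ on morphisms of $T$-modules. Conceptually, the content of the theorem is that the hypothesis ``$R$ preserves $R$-split coequalizers'' is calibrated exactly so that the canonical bar-style presentation of modules over $T$ can be lifted from $\cC$ to $\cD$, and conservativity of $R$ then upgrades this lift to a genuine equivalence.
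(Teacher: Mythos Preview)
Your sketch is the standard proof of Beck's monadicity theorem and is essentially correct. The paper, however, does not prove this statement at all: it is simply quoted as Beck's classical theorem and immediately used as a black box (the text moves directly to ``As an illustration, suppose we have an adjunction\ldots''). So there is no paper proof to compare against; you have supplied the standard argument the paper omits.

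One small slip worth fixing: in the last paragraph of your construction you write that applying $R$ to the pair $\epsilon_{LRA}, LR\epsilon_A$ yields ``an $R$-split pair with split coequalizer $\id_{RA}$''. The split coequalizer in $\cC$ is the map $R\epsilon_A: RLRA \to RA$, not the identity on $RA$; the splitting is $\eta_{RA}$. With that correction your argument goes through: the coequalizer of the pair in $\cD$ exists and is preserved by $R$, the comparison map to $A$ is sent by $R$ to an isomorphism, and conservativity finishes the identification $S(\widetilde R A)\cong A$.
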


\begin{lemma} Suppose that $\cD$ is abelian.  Then $R$ is conservative if, and only if, for any $X$ with $R(X)\cong 0$ we have $X\cong 0$.\end{lemma}
\begin{proof}
Suppose we have $f:X\to Y$ , and that $R(f)$ an isomorphism.  Since $R$ is right exact (in fact cocontinuous) it preserves finite colimits, and since $R$ is a right adjoint, it preserves arbitrary limits; hence $R(\ker f)=\ker R(f)=0$ and $R(\coker f)=\coker R(f)=0$, hence $\ker f$ and $\coker f$ are zero, hence $f$ is an isomorphism.
\end{proof}

\begin{remark} We note that without the abelian assumption, this only gives us that $f$ is monic and epic. Thus to check conservativity of functors in what follows, it will be extremely useful to know the categories involved are abelian --  so that monic and epic maps are isomorphisms. 
\end{remark}
\subsection{Reconstruction for module categories}

\begin{definition}[See~\cite{Ostrik2003,Douglas2013}]  Let $\cA$ be an abelian tensor category in $\LFP$ with abelian module category $\cM\in\LFP$.  We say that $m\in\cM$ is:
\begin{enumerate}
\item  An \emph{$\cA$-generator}, if $\act_m^R$ is faithful.
\item  \emph{$\cA$-projective} if $\act_m^R$ is colimit-preserving.\footnote{We note that this is equivalent to asking that $\act_m^R$ preserves finite colimits, since by construction $\act_m^R$ preserves filtered colimits.}
\item An \emph{$\cA$-progenerator}, or progenerator for the $\cA$-action, if it is an $\cA$-projective $\cA$-generator.
\end{enumerate}
\end{definition}

\begin{theorem}[Monadicity for module categories]\label{monadicitymodules} Let $\cA$ be a rigid abelian tensor category in $\LFP$, and let $\cM\in\LFP$ be an abelian $\cA$-module category with an $\cA$-progenerator $m\in\cM$.  Then the functor $\widetilde{\act_m^R}$ is an equivalence of $\cA$-module categories,

$$\cM\simeq \underline{\End}(m)\modu_\cA,$$
where $\cA$ acts on the right by multiplication.
\end{theorem}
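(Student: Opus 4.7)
The plan is to deduce the theorem from Beck's monadicity theorem applied to the adjoint pair
\[
\act_m : \ind(\cA) \rightleftarrows \ind(\cM) : \act_m^R
\]
obtained by passing to ind-completions (where $\act_m$ is cocontinuously extended and $\act_m^R$ is colimit-preserving by the $\cA$-projectivity of $m$). First I would identify the monad $T = \act_m^R \circ \act_m$ explicitly. Since $\act_m^R$ is canonically a functor of right $\cA$-modules (Corollary~2.13 of \cite{Douglas2013}), for every $a \in \ind(\cA)$ one has
\[
T(a) \;=\; \act_m^R(m \ot a) \;\cong\; \act_m^R(m) \ot a \;=\; \underline{\End}_\cA(m) \ot a,
\]
and the unit and multiplication of $T$ are precisely those induced by the ind-algebra structure on $\underline{\End}_\cA(m)$. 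Consequently $T$-modules in $\ind(\cA)$ coincide, by direct inspection, with right $\underline{\End}_\cA(m)$-modules in $\ind(\cA)$, compatibly with the right $\cA$-action by multiplication.

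Next I would verify the two hypotheses of Beck's theorem for $\act_m^R$. Existence and preservation of the relevant coequalizers is immediate: $\ind(\cA)$ and $\ind(\cM)$ are cocomplete, and $\cA$-projectivity of $m$ means precisely that $\act_m^R$ preserves all colimits. For conservativity, I would argue as in the discussion preceding the theorem. Because $\act_m^R$ is both a left and a right adjoint, it preserves both kernels and cokernels in the abelian categories $\ind(\cA)$ and $\ind(\cM)$. If $\act_m^R(f)$ is an isomorphism, then $\act_m^R(\ker f) = 0$ and $\act_m^R(\coker f) = 0$; since $\act_m^R(\id_X) = \id_{\act_m^R(X)}$, the faithfulness of $\act_m^R$, which is exactly the $\cA$-generator hypothesis, forces $\ker f = 0$ and $\coker f = 0$, so that $f$ is itself an isomorphism.

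The main obstacle is the conservativity step, which leans crucially on the abelian structure of $\ind(\cM)$ in order to pass from ``$\act_m^R(f)$ an isomorphism'' to ``$f$ an isomorphism'' via kernels and cokernels; without the abelian hypothesis, faithfulness alone would only yield monic and epic, not invertible. This is precisely why Proposition~\ref{all abelian} and the accompanying discussion emphasize that the abelian property must be preserved throughout the construction. With both hypotheses verified, Beck's theorem produces an equivalence $\ind(\cM) \simeq T\modu_{\ind(\cA)} \simeq \underline{\End}_\cA(m)\modu_{\ind(\cA)}$, and compatibility with the right $\cA$-module structures is immediate from the identification of $T$ as ``tensor on the left with $\underline{\End}_\cA(m)$''.
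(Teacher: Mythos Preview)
Your proposal is correct and follows essentially the same approach as the paper: apply Beck's monadicity theorem to the adjunction $(\act_m,\act_m^R)$, with the coequalizer condition automatic from cocompleteness and $\cA$-projectivity, and conservativity deduced from faithfulness via the abelian kernel/cokernel argument spelled out just before the theorem. One small slip: since $T(a)\cong\underline{\End}(m)\ot a$ is tensoring on the \emph{left}, $T$-modules are \emph{left} $\underline{\End}(m)$-modules (with $\cA$ acting on the right), as the paper notes in the remark following the theorem.
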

\begin{proof} We apply Theorem \ref{rexmonad} to the functor $\act_m:\cA\to\cM$.  We need to check that the right adjoint $\act^R_m$ is conservative and co-continuous, which is precisely the assumption that $m$ is a pro-generator.  Hence $\act^R_m$ is monadic.  Because $\cA$ is rigid, $\act^R_m$ carries a canonical module structure, so that the monad $\act^R_m\circ \act_m$ is a module functor, and can therefore be identified with the functor of tensoring with the algebra object $\act^R_m\circ \act_m(\mathbf{1}_\cA)\cong \underline{\End}(m)$.
\end{proof}

\begin{remark} Note that a \emph{right} $\cA$-module category with an $\cA$-progenerator is identified with the category of \emph{left} modules for the internal endomorphism algebra, and vice versa.
\end{remark}

\begin{definition} A functor $F:\cA\to\cB$ is called \emph{dominant} if every object of $\cB$ appears as a sub-object (equivalently using rigidity, quotient) of an object in the image of $F$.\end{definition}

\begin{remark}
Note that, by Lemma 2.1 of~\cite{Bruguieres2011}, a tensor functor $F:\cA\to\cB$ is dominant if, and only if, its right adjoint $F^R$ is faithful, i.e. $\cB$ should be generated under colimits by the image of $\cA$.
\end{remark}

There are two senses in which the construction of internal endomorphism algebras is functorial:  with respect to $\cA$-module functors, and with respect to base change along a dominant tensor functor $F:\cA\to\cB$.

\begin{theorem}[Functoriality of monads]\label{monad-functoriality}
Let $\cM$ and $\cN$ be $\cA$-module categories in $\LFP$, $F:\cM\to\cN$ an $\cA$-module functor, and let $m\in \cM$.  
Then we have a canonical homomorphism of algebras,
$$\rho_F: \underline{\End}_\cA(m) \to \underline{\End}_\cA(F(m)),$$
\end{theorem}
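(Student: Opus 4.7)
The plan is to produce $\tilde F$ via Yoneda from the module structure of $F$, and then check compatibility with multiplication and unit directly on the functor-of-points side.

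First I would recall the defining universal property of $\underline{\End}_\cA(m)$. By the adjunction $(\act_m,\act_m^R)$ in ind-categories, the object $\underline{\End}_\cA(m)=\act_m^R(m)\in\ind(\cA)$ represents the functor
\[
a\longmapsto \Hom_\cM(m\otimes a,\,m),
\]
and, dually, $\underline{\End}_\cA(F(m))=\act_{F(m)}^R(F(m))$ represents $a\mapsto \Hom_\cN(F(m)\otimes a,\,F(m))$. Because $F:\cM\to\cN$ is an $\cA$-module functor, it is equipped with a natural isomorphism $\phi_{x,a}:F(x\otimes a)\xrightarrow{\sim}F(x)\otimes a$ satisfying the usual pentagon/triangle coherences. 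Applying $F$ to morphisms and postcomposing with $\phi_{m,a}$ yields a natural transformation
\[
\Hom_\cM(m\otimes a,m)\;\longrightarrow\;\Hom_\cN(F(m)\otimes a,F(m)),\qquad f\longmapsto F(f)\circ\phi_{m,a}^{-1},
\]
and by the Yoneda lemma this is represented by a unique morphism $\tilde F:\underline{\End}_\cA(m)\to\underline{\End}_\cA(F(m))$ in $\ind(\cA)$. Equivalently, in terms of monads, $\tilde F=\act_m^R(\eta_F(m))$, where $\eta_F:\id_\cM\to F^R F$ is the unit of the ind-adjunction $(F,F^R)$ and one uses the canonical identification $\act_{F(m)}^R\cong\act_m^R\circ F^R$ induced by $F\circ\act_m\cong\act_{F(m)}$.

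Next I would verify that $\tilde F$ is a morphism of algebras. On the Yoneda side, the multiplication on $\underline{\End}_\cA(m)$ corresponds to composition: given $f:m\otimes a\to m$ and $g:m\otimes b\to m$, one forms $g\circ(f\otimes\id_b):m\otimes a\otimes b\to m$. Under $\tilde F$ this is sent to $F(g)\circ F(f\otimes\id_b)$, transported via the coherence isomorphism $\phi_{m\otimes a,b}$ and the naturality of $\phi$. The module coherence of $\phi$ identifies $F(f\otimes\id_b)$, after conjugation by the appropriate $\phi$'s, with $(F(f)\circ\phi_{m,a}^{-1})\otimes\id_b$, and the pentagon for $\phi$ then gives the equality with the composition in $\underline{\End}_\cA(F(m))$. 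The unit map $\mathbf{1}\to\underline{\End}_\cA(m)$ corresponds to $\id_m:m\otimes\mathbf 1\to m$, and similarly is sent to the map corresponding to $\id_{F(m)}$, which is the unit of $\underline{\End}_\cA(F(m))$, using only the triangle coherence for $\phi_{m,\mathbf 1}$.

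The main obstacle is not conceptual but bookkeeping: keeping straight the coherence isomorphisms $\phi_{x,a}$ throughout the verification that composition is respected, and making sure that adjoints and all constructions are interpreted in $\ind(\cA)$ and $\ind(\cN)$ rather than in the small categories, where $\act_m^R$ and $F^R$ need not land. Once one settles on either the Yoneda formulation or the monadic one $\tilde F=\act_m^R(\eta_F(m))$ and uses that $\act_m^R$ is canonically an $\cA$-module functor (Corollary 2.13 of \cite{Douglas2013}), the algebra homomorphism property is a diagram chase from the naturality of $\eta_F$ and the module coherences.
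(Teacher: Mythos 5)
Your proposal is correct and follows essentially the same route as the paper: the paper defines $\tilde F$ precisely as $\act_m^R(\eta_F)$ applied at $\act_m(\mathbf 1)$, combined with the identification $\act_{F(m)}\cong F\circ\act_m$ (hence $\act_{F(m)}^R\cong\act_m^R\circ F^R$), which is your monadic formulation. Your Yoneda reformulation and the explicit check of compatibility with composition simply fill in the step the paper dismisses as ``straightforward,'' so there is no substantive difference.
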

\begin{proof}
We have the following commutative diagram of functors:
$$\xymatrix{
&& \cM\ar@/^/[dll]\ar@/^/[dd]^F\\
\cA\ar@/^/[urr]^{\act_m}\ar@/_/[drr]_{\act_{F(m)}}\\
&&\cN \ar@/_/[ull] \ar@/^/[uu]^{F^R}.
}$$
Hence, we define:
$$\rho_F:\underline{\End}(m) = \act_m^R\circ \act_m(\mathbf{1}) \xrightarrow{\eta_F} \act_m^R\circ F^R\circ F\circ\act_m(\mathbf{1})$$
$$\cong \act_{F(m)}^R\circ \act_{F(m)}(\mathbf{1}) = \underline{\End}(F(m)).$$

It is straightforward to see that $\rho_F$ is compatible with monadic composition, so defines an algebra homomorphism.
\end{proof}

\begin{corollary}  Suppose that $m$ is an $\cA$-progenerator for $\cM$, and $F(m)$ is an $\cA$-progenerator for $\cN$, so that we have equivalences of $\cA$-module categories
$$\cM\simeq \underline{\End}(m)\modu_\cA,\qquad \cN\simeq \underline{\End}(F(m))\modu_\cA.$$
Then $F^R$ is naturally isomorphic to the pull-back functor along $\rho_F$.
\end{corollary}
Recall that if $a$ is an algebra in a tensor category $\cA$ and $\cN$ is a left $\cA$-module category, then by definition for any $n \in \cN$ we have that $a\ot n \in \cN$ so that one can consider the category of $a$-modules in $\cN$, which we denote by $a\modu_\cN$.  This category no longer carries an $\cA$-module structure, in general.

\begin{theorem}[Monadicity for relative tensor products]\label{reltens} Let $\cA$ be a rigid abelian tensor category, and let $\cM$, $\cN$ be right and left $\cA$-module abelian categories, respectively, with $\cA$-progenerators $m\in \cM$ and $n\in\cN$.  Then we have equivalences of categories:
$$\cM \bt_{\cA}\cN \simeq \underline{\End}(m)\modu_\cN \simeq (\underline{\End}(m)\textrm{-}\underline{\End}(n))\operatorname{-bimod}_{\cA}.$$
\end{theorem}
\begin{proof}
The second asserted equivalence is obvious, so we focus on the first.
First we note that the $\cA$-module category $\cM$ is $\cA$-dualizable, i.e., we have a category $\cM^\vee$ together with a coevaluation
$$\Vect\rightarrow \cM\bt_\cA \cM^\vee$$ and an evaluation
$$\cM^\vee\bt \cM\to \cA$$ satisfying the standard duality identities. Indeed
by Theorem~\ref{monadicitymodules}, we have
$$\cM\simeq \underline{\End}(m)\modu_\cA,$$
whence we have a candidate for the dual as
$$\cM^\vee=\modul_\cA-\underline{\End}(m),$$
which is moreover pointed by the $\cA$-module functor $\underline{\Hom}(m,-)$.  The evaluation is given by the relative tensor product of right and left $\underline{\End}(m)$-modules in $\cA$ and the coevaluation given by the image of the pointing (distinguished object) of $\cM\ot \cM^\vee$.
Hence, we have a functor,
$$ G:\cM\bt_\cA\cN\to Fun_\cA(\cM^{\vee},\cN) \xrightarrow{ev_m} \cN$$ 
where the first map is an equivalence, induced by the evaluation $$\cM^\vee \bt \cM\bt_\cA \cN\longrightarrow \cN,$$
and the second map is conservative by the assumption that $m$ is an $\cA$-generator: an $\cA$-functor vanishing on $m$ must vanish identically (here we use the property that $\cM$ is abelian to characterize conservativity of the functor by checking on objects).  Hence the functor $G$ is conservative.  It is easily checked that $n'\mapsto m\bt_\cA n'$ defines its left adjoint $G^L$, and that $GG^L(n')\cong \underline{\End(m)}\ot n'$.  Hence we obtain from Beck monadicity an equivalence,
$$\cM\bt_\cA \cN \simeq \underline{\End(m)}\modu_\cN,$$
as claimed.
\end{proof}

\begin{corollary}[Monadicity for base change]\label{base-change} Let $\cM$ be an abelian $\cA$-module category, $F:\cA\to\cB$ a dominant tensor functor, and $m\in \cM$ a $\cA$-progenerator.  Then $m\bt_\cA\mathbf{1}_\cB$ is a $\cB$-progenerator of $\cM\bt_\cA\cB$, and we have an equivalence of $\cB$-module categories,

$$\cM \bt_{\cA}\cB \simeq F(\underline{\End}(m))\modu_{\cB}.$$
\end{corollary}
\begin{proof}
This is a special case of Theorem \ref{reltens}, simply noting that the functors $G$ and $G_\cA$ are canonically $\cB$-module functors in the case $\cN=\cB$, regarded as a left $\cA$-module through the functor $F$, and right $\cB$-module by the right regular action of $\cB$ on itself.
\end{proof}

\begin{remark} We note that, in the absence of a braiding, the category of $A-B$ bimodules over algebras $A$ and $B$, internal to a tensor category $\cA$, cannot be written as the category of $A\ot B^{op}$-modules in $\cA$, as in the case $\cA=\Vect$, simply because there is no natural algebra structure on $A\ot B^{op}$.
\end{remark}

\subsection{Reconstruction for braided tensor categories}
We shall assume from now on that $\cA$ is a rigid braided tensor category in $\LFP$, which we shall moreover assume to be an abelian category.  In Proposition \ref{prop:1AA} and Theorem \ref{thm:tr-monad} below, we highlight two important special cases where monadicity applies.

First, recall that the braiding of $\cA$ induces a tensor structure on the multiplication map $\cA^{\bt n}\rightarrow \cA$, hence a left and a right $\cA^{\bt n}$-module structure on $\cA$, which we call the left and right regular actions of $\cA^{\bt n}$ on $\cA$. We have:
\begin{proposition}\label{prop:1AA}For any $n$, the tensor unit $\mathbf{1}_\cA$ is a progenerator for the $n$-fold right regular action of $\cA^{\bt n}$ on $\cA$.
\end{proposition}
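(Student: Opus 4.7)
The plan is to verify directly that the right adjoint $R := \act_\mathbf{1}^R$ of the action functor $\act_\mathbf{1}: \cA^{\bt n}\to\cA$ is both faithful and cocontinuous; the claim that $\mathbf{1}_\cA$ is an $\cA^{\bt n}$-progenerator is then immediate from the definitions, and Beck's monadicity theorem (Theorem~\ref{rexmonad}) would yield the equivalence $\ind\cA\simeq \underline{\End}_{\cA^{\bt n}}(\mathbf{1})\modu$. The case $n=1$ is trivial as $\act_\mathbf{1}=T^1=\id_\cA$ is its own right adjoint; for $n\geq 2$, the action functor is the iterated tensor product $T^n$, which after passing to ind-completions is a cocontinuous functor between presentable categories, so its right adjoint $R$ exists by the adjoint functor theorem.

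For faithfulness I would argue by conservativity; in the abelian setting provided by Sch\"appi's theorem and Proposition~\ref{all abelian}, a conservative right adjoint $R$ is automatically faithful, since it preserves kernels (so $R(f)=0$ forces $R$ applied to $\ker f\hookrightarrow X$ to be an isomorphism, hence $\ker f=X$ and $f=0$). Conservativity in turn uses the canonical section $\iota: \cA\hookrightarrow \cA^{\bt n}$, $c\mapsto c\bt \mathbf{1}^{\bt(n-1)}$, which satisfies $T^n\circ\iota=\id_\cA$: if $R(c)=0$ then by adjunction
\[
0=\Hom_{\cA^{\bt n}}(\iota(c),R(c))=\Hom_\cA(T^n\iota(c),c)=\End_\cA(c),
\]
so $\id_c=0$ and $c=0$.

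The main obstacle is cocontinuity of $R$. My strategy is to reduce to the case $n=2$ by induction, using the factorization $T^n = T^2\circ (\id_\cA\bt T^{n-1})$: the right adjoint of a composition is the composition of right adjoints, and since $\id_\cA\bt(-)$ transports adjunctions in $\Coc_\K$ whenever both adjoints are themselves cocontinuous, the inductive hypothesis that the right adjoint to $T^{n-1}$ is cocontinuous reduces everything to $n=2$. In that case rigidity of $\cA$ is the essential input: since every object $x\in\cA$ admits left and right duals, tensoring with $x$ possesses cocontinuous ambidextrous adjoints, and one can produce $R$ via a coend formula of the form
\[
R(c)\,=\,\int^{x\in \cA}\,x^*\bt(x\ot c)
\]
in the ind-completion, which preserves colimits (being itself built from colimits). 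In the spirit of \cite{Douglas2013,Etingof2004}, one may alternatively identify $R(\mathbf{1})$ with an explicit ind-algebra in $\cA\bt\cA$ encoding the $\cA$-bimodule structure on $\cA$; the monad $R\circ T^n$ then coincides with tensoring by this ind-algebra and is automatically cocontinuous.
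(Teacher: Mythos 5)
Your proposal is correct and rests on the same two pillars as the paper's argument: faithfulness of $\act_{\mathbf{1}}^{R}$ comes from the fact that every object of $\cA$ is reached by $T^n$, and cocontinuity comes from rigidity. The difference is one of packaging. The paper factors $\act_{\mathbf{1}_\cA}^{\cA^{\bt n}} = \act_{\mathbf{1}_\cA}^{\cA}\circ T^n$ and then simply cites (i) dominance of the tensor functor $T^n$ for the generator half and (ii) the earlier proposition that rigidity makes $\ind T$ quasi-proper (hence $(T^n)^R$ colimit-preserving) for the projectivity half. You instead re-derive both ingredients: conservativity via the section $\iota(c)=c\bt\mathbf{1}^{\bt(n-1)}$ with $T^n\circ\iota\cong\id$ (a clean, explicit witness of why $T^n$ is dominant), and cocontinuity of $(T^n)^R$ by induction on $n$ through $T^n=T^2\circ(\id_\cA\bt T^{n-1})$ together with a coend formula $R(c)=\int^{x}x^*\bt(x\ot c)$ for $n=2$. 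The one step you should not leave as ``of the form'' is the verification that this coend really is right adjoint to $T$ (the Yoneda computation against compact generators $a\bt b$, using $\Hom_\cA(a\ot b,c)\cong\Hom_\cA(b,a^*\ot c)$ and fixing the left/right dual conventions); once checked, it is precisely the content of the paper's earlier proposition that $\ind T$ has a quasi-proper right adjoint, so you could equally well cite that and shorten the argument. Everything else — the passage from $R(c)=0\Rightarrow c=0$ to conservativity to faithfulness, using that the relevant ind-categories are abelian so that a conservative kernel-preserving functor is faithful — is sound.
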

\begin{proof} The unit isomorphisms $X\cong X\ot\mathbf{1}_\cA$ show that $\act_{\mathbf{1}_\cA}$ is essentially surjective, hence dominant.  All that remains is to show that $\mathbf{1}_\cA$ is $\cA$-projective, and this is the content of Proposition \ref{prop:adMult}.\end{proof}

	\begin{remark}\label{rmk:coend} It is possible to describe $\underline{\End}_{\cA^{\bt2}}(\mathbf{1}_{\cA})$ explicitly, via the co-end construction~\cite{Lyubashenko1995,Lyubashenko1994,Majid1995}; we have:
\begin{equation}\underline{\End}(_\cA\mathbf{1}_\cA) = \left(\bigoplus_{V\in\comp\mathcal{A}} V^*\bt V\right) \Big/ \langle \operatorname{Im}(\id_{W^*} \bt \phi - \phi^*\bt\id_V) \,\,|\,\, \phi:V\to W \rangle.\label{coend}\end{equation}
If $\mathcal{A}$ is semi-simple, we may make this more explicit by choosing a representative $X$ of every simple isomorphism class of object and write simply,
$$\underline\End_{\cA^{\bt 2}}(\mathbf{1}_\cA) \cong \bigoplus_{X} X^*\bt X.$$
When expressed through the canonical maps $\iota_U:U^*\bt U\to\underline{\End}(_\cA\mathbf{1}_\cA)$, the algebra structure is the tautological one, composed with the braiding:
{\footnotesize$$V^*\bt V \ot W^*\bt W = (V^*\ot W^*) \bt (V \ot W) \xrightarrow{\sigma_{V^*,W^*}} (W^*\ot V^*) \bt (V \ot W) \xrightarrow{\iota_{V\ot W}} \underline{\End}(_\cA\mathbf{1}_\cA).$$}

\end{remark}

\begin{definition}\label{def:REA} Let $\oa:=T(\underline{\End}_{\cA^{\bt 2}}(\mathbf{1}_{\cA}))$ denote the algebra in $\cA$ obtained by applying the tensor product $T:\cA\bt\cA\to\cA$.
\end{definition}

The algebra $\oa$ will play an important role throughout the rest of the paper, as the basic building block for quantum algebras computing factorization homology.  As a special case of monadicity for base change, we have:

\begin{theorem}[Monadicity for traces]\label{thm:tr-monad}
We have an equivalence of categories,
$$\cA\underset{\cA\bt \cA}{\bt} \cA\simeq \oa\modu_\cA.$$
\end{theorem}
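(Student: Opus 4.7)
The plan is to deduce this as a direct application of Theorem~\ref{base-change} (monadicity for base change) to the tensor product functor $T:\cA\bt\cA\to\cA$.

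First I would check the two input hypotheses. On the one hand, $T$ is a dominant Rex tensor functor: its monoidal structure is given by the coherence isomorphisms $J_{\mathbf{a},\mathbf{b}}$ of~\eqref{eq:tensor} specialized to $k=2$, built from the braiding of $\cA$; the monoidal coherence axioms are a direct consequence of the hexagon identities. Dominance is immediate from $T(X\bt\mathbf{1}_\cA)\cong X$ for every $X\in\cA$, so every object of the target is already in the image (not merely a sub-quotient). On the other hand, Proposition~\ref{prop:1AA} (the case $n=2$) tells us that $\mathbf{1}_\cA$ is an $(\cA\bt\cA)$-progenerator for the right regular action of $\cA\bt\cA$ on $\cA$.

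Next I would apply Theorem~\ref{base-change} with the ambient tensor category replaced by $\cA\bt\cA$, the module category $\cM:=\cA$ equipped with its right regular $(\cA\bt\cA)$-action, the dominant tensor functor $F:=T$, the target $\cB:=\cA$ equipped with the left $(\cA\bt\cA)$-action pulled back along $T$ (so that $(a\bt b)\cdot x = a\otimes b\otimes x$), and the progenerator $m:=\mathbf{1}_\cA$. The conclusion of the theorem is then exactly
$$\cA \bt_{\cA\bt\cA} \cA \;\simeq\; T\bigl(\underline{\End}_{\cA\bt\cA}(\mathbf{1}_\cA)\bigr)\modu_\cA \;=\; \cO_\cA\modu_\cA,$$
where the last equality is the definition $\cO_\cA := T(\underline{\End}_{\cA\bt\cA}(\mathbf{1}_\cA))$ just given.

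The only point that calls for real care is the identification of the second (right-hand) factor of the relative tensor product with $\cA$ viewed as a left $(\cA\bt\cA)$-module via $T$; in other words, $T$ plays two roles simultaneously in the application—it is both the tensor functor along which base change is being performed and the functor encoding the left $(\cA\bt\cA)$-action on the target copy of $\cA$. Once these conventions are matched, all the hypotheses of Theorem~\ref{base-change} are verified and the equivalence follows at once.
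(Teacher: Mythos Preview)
Your proposal is correct and is exactly the approach the paper takes: the paper introduces the theorem with the phrase ``As a special case of monadicity for base change, we have,'' and your argument simply spells out this specialization, verifying that $T$ is a dominant tensor functor and invoking Proposition~\ref{prop:1AA} for the progenerator hypothesis. The only addition you make is to write out the details the paper leaves implicit.
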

\begin{proof}

We have shown in Proposition \ref{prop:1AA} that $\mathbf{1}_\cA$ is a progenerator for the $\cA\bt\cA$-action.  Note that we have an evident isomorphism of functors $\act_{\mathbf{1}_\cA}\cong T$.  Hence applying monadicity for module categories, Theorem \ref{monadicitymodules}, we have an equivalence of $\cA\bt\cA$-module categories,
$$\cA_{\cA\bt\cA}\simeq\underline{\End}(_\cA\mathbf{1}_\cA)\modu_{\cA\bt\cA}.$$
We may now apply monadicity for base change, Corollary \ref{base-change}, to the tensor functor $(T,\sigma):\cA\bt\cA\to \cA$ obtain the asserted equivalence of categories.

We note that the tensor product functor plays two distinct roles in this proof: it gives the $\cA\bt\cA$-module structure on $\cA$ allowing us to apply monadicity for module categories and it serves, together with its braiding, as a tensor functor $\cA\bt\cA\to\cA$ to which we apply monadicity for base change.

\end{proof}

\begin{remark}In particular, let us note in passing that Theorem \ref{thm:tr-monad} gives an explicit computation of the zeroeth Hochschild homology, or monoidal trace, of a braided tensor category.  This computation of the Hochschild homology appears to be new, though its antecedents can be seen in Lyubasheko and Majid's earlier works \cite{Majid1995,Majid1993b,Lyubashenko1995} which pre-dated the notion of Hochschild homology of tensor categories.\end{remark}

Clearly, the description of $\underline{\End}_{\cA^{\bt 2}}(\mathbf{1}_{\cA})$ implies a similar presentation,

\begin{equation}\oa = \left(\bigoplus_{V\in\comp(\mathcal{A})} V^*\ot V\right) \Big/ \langle \operatorname{Im}(\id_{W^*} \ot \phi - \phi^*\ot\id_V) \,\,|\,\, \phi:V\to W \rangle,\label{coend-br}\end{equation}

and in the case $\cA$ is semi-simple, we may write:

$$\oa \cong \bigoplus_{X} X^*\ot X.$$

The multiplication map $\mu:\oa\ot\oa\to \oa$ may be expressed entirely internally to $\cA$.  Using the canonical maps $\iota_V:V^*\otimes V\to \oa$, we may write the multiplication as follows:
$$(V^*\ot V) \ot (W^* \ot W) \xrightarrow{\sigma_{V^*\ot V,W^*}} (W^*\ot V^*)\ot (V\ot W) \xrightarrow{\iota_{V\ot W}} \oa.$$

See Section~\ref{sec:examples} for a thorough discussion of the algebra $\oa$, and its many appearances in low-dimensional topology and representation theory.

For later use, we will also need the following modification of Theorem~\ref{thm:tr-monad}.  For an integer $k\in\ZZ$, let $I^{(k)}:\cA\rightarrow\cA$ be the tensor functor whose underlying functor is the identity, and whose tensor structure is given for $X,Y \in \cA$ by $(\sigma_{Y,X}\sigma_{X,Y})^k$. We observe that $I^{(0)}$ is nothing but the identity functor, and that a balancing on $\cA$ is by definition an isomorphism of tensor functors $\id=I^{(0)}\cong I^{(1)}$, which clearly implies that $I^{(k)}\cong \id$ as well.

Let $\cA^{(k)}$ be the right $\cA \bt \cA$ bimodule whose underlying category is $\cA$, and whose module structure is induced by
\begin{equation}\label{eq:twistedMul}
	\cA \bt \cA \xrightarrow{\id \bt I^{(k)}} \cA \bt \cA \xrightarrow{T} \cA.
\end{equation}

Let $\oak{k}$ denote the algebra in $\cA\bt\cA$ which is the image of $\un_\cA$ under the right adjoint of~\eqref{eq:twistedMul}. Crucially, as objects $\oak{k}\cong T^R(1)\cong\oa$ but the multiplication is twisted by the double braidings. We also note that if $\cA$ is balanced there is a canonical algebra isomorphism $\oak{k}\cong \oa$. Then, Theorems \ref{thm:tr-monad} and \ref{base-change} combine to give:

\begin{corollary}[Monadicity for twisted traces]\label{cor:twisted}
We have an equivalence of categories,
$$\cA^{(k)}\underset{\cA\bt \cA}{\bt} \cA\simeq \oak{k}\modu_\cA.$$
\end{corollary}

\section{Computing factorization homology of punctured surfaces}\label{sec:surfaces}
This section contains the first main result of this paper, that is an explicit computation of the factorization homology of a braided tensor category on any punctured surface.  In particular, we treat both the cases of framed surfaces, with coefficients in braided tensor categories, and of oriented surfaces, with coefficients in balanced braided tensor categories, separately.
\subsection{The distinguished object}\label{sec:dist}
Recall from Section~\ref{sec:pointed} that the inclusion 
\[
\emptyset \longrightarrow M
\]
of the empty manifold into any surface induces a canonical functor
\[
	\Vect_{\K}\longrightarrow \int_M\cA.
\]
\begin{definition}
The image of the tensor unit $\K\in \Vect$ under the above functor will be called the \emph{distinguished object} or \emph{quantum structure sheaf}, and denoted by $\cO_{\cA,M}\in \int_M\cA$.
\end{definition}

We note that by construction, the distinguished object is mapped to the distinguished object by the functor on factorization homology induced by an embedding of manifolds.  In this section we explicitly compute the functor associated to certain particular embeddings:

\begin{proposition}\label{pi1}
Let $X=(\RR^2)^{\sqcup k}$ be a disjoint union of $k$ 2-disks.  We have:
\begin{enumerate}
\item We have an isomorphism $\cO_{\cA,X} \cong \un_{\cA^{\boxtimes k}}$.
\item The functor on factorization homology induced by any embedding $X \rightarrow \RR^2$ is isomorphic to the tensor functor $\cA^{\boxtimes k} \rightarrow \cA$.
\item Any two embeddings of $X$ into a path connected manifold $M$ give rise to isomorphic functors, and any such factors through the tensor functor $\cA^{\boxtimes k}\rightarrow \cA$.
\end{enumerate}
\end{proposition}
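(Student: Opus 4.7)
The plan is to prove the three parts in order, exploiting progressively more of the structure of factorization homology, namely its symmetric monoidality, its defining universal property as a left Kan extension, and finally its invariance under isotopy.

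For part (1), the key is that $\int_{(-)}\cA:\mathrm{Mfld}_2^{fr}\to\Rex_\K$ is symmetric monoidal, sending disjoint union to Deligne--Kelly tensor product. The inclusion $\emptyset\hookrightarrow(\RR^2)^{\sqcup k}$ decomposes as the $k$-fold disjoint union of inclusions $\emptyset\hookrightarrow\RR^2$, and applying $\int_{(-)}\cA$ turns this into the $k$-fold external tensor product of the functor $\Vect_\K\cong\int_\emptyset\cA\to\int_{\RR^2}\cA\cong\cA$. Since this underlying functor is symmetric monoidal, it carries $\K$ to $\un_\cA$, so $\dist_X$ is identified with $\un_\cA^{\bt k}=\un_{\cA^{\bt k}}$.

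For part (2), the point is that the $E_2$-algebra structure on $\cA$ is by definition the symmetric monoidal functor $\mathrm{Disk}_2^{fr}\to\Rex_\K$ that factorization homology extends via left Kan extension. An embedding $(\RR^2)^{\sqcup k}\hookrightarrow\RR^2$ is precisely a $k$-ary operation in the little $2$-disks operad, and the value of the $E_2$-algebra $\cA$ on it is, by construction, an iterated tensor product functor $\cA^{\bt k}\to\cA$; different choices of configuration differ by the associator and braiding of $\cA$, producing canonically isomorphic functors all of which compose with the forgetful map to the underlying $k$-fold tensor functor $T^{k}$.

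For part (3), I would combine two ingredients. First, given any embedding $f\colon(\RR^2)^{\sqcup k}\hookrightarrow M$, we may shrink the source disks to sub-disks of compact closure and choose an open disk $D\cong\RR^2$ in $M$ containing their image; this factors $f$ (up to isomorphism in $\mathrm{Mfld}_2^{fr}$, using that an embedded shrinkage of $(\RR^2)^{\sqcup k}$ is isomorphic to the original) as $(\RR^2)^{\sqcup k}\hookrightarrow D\hookrightarrow M$. By part (2), the induced functor on $\int$'s factors as $\cA^{\bt k}\xrightarrow{T^{k}}\cA\to\int_M\cA$. Second, any two such embeddings give isomorphic functors because factorization homology is defined via the $(2,1)$-category $\mathrm{Mfld}_2^{fr}$ (or $\mathrm{Mfld}_2^{or}$), in which $2$-morphisms are isotopies: any two framed/oriented embeddings of $(\RR^2)^{\sqcup k}$ into a path-connected $M$ lie in the same component of $\mathrm{Emb}^{fr/or}((\RR^2)^{\sqcup k},M)$ (using that the relevant framed configuration space of a connected surface is path-connected), so they produce $2$-isomorphic $1$-morphisms in $\mathrm{Mfld}_2$, hence isomorphic functors after applying $\int_{(-)}\cA$.

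The main subtlety will be the last step: verifying that the relevant embedding space is path-connected in the framed (resp.\ oriented) category, and that a path of embeddings really does induce a $2$-morphism (natural isomorphism of functors) in the target. This uses the $(2,1)$-categorical enhancement of $\int_{(-)}\cA$ and the fact that, on a surface, the oriented/framed structure on the source $(\RR^2)^{\sqcup k}$ is unique up to isotopy. The rest of the argument is essentially a bookkeeping unwinding of definitions, where the only real content is the universal property of the Kan extension identifying the $E_2$-structure on $\cA$ with the tensor product functor.
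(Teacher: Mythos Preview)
Your proposal is correct and follows essentially the same approach as the paper's proof: parts (1) and (2) are unwound from the definition of an $E_2$-algebra as a symmetric monoidal functor out of $\mathrm{Disk}_2^{fr}$, and part (3) is obtained by factoring through a containing disk together with path-connectedness of $\Emb(X,M)$. The paper states this in two sentences; your version spells out the same steps more explicitly, and your flagged ``subtlety'' about connectedness of the framed embedding space is exactly the point the paper asserts without further comment.
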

\begin{proof}
Parts (1) and (2) are contained in the equivalence between locally constant factorization algebras on $\RR^2$ and $E_2$-algebras. Part (3) follows from the fact that $\operatorname{Emb}(X,M)$ is path connected, and the fact that any such embedding can be factored through an embedding of $X$ into a bigger disk. 
\end{proof}

\subsection{Moduli algebras}\label{sec:moduliAlgebra}

Let $S$ be a punctured surface, together with a choice of an interval along the boundary. We define the following algebra:
\begin{definition}
The moduli algebra of $S$ is
$A_S:=\underline{\End}_\cA(\cO_{\cA,S}),$
where the internal endomorphisms are defined with respect to the $\cA$-action on $\int_S\cA$ given by the chosen interval.
\end{definition}

The main goal of this section is to describe a combinatorial and explicit presentation of $A_S$.  Punctured surfaces may be indexed by combinatorial data called \emph{gluing patterns} $P$.  In this section, we will give explicit presentations of algebras $a_p$ in $\cA$, whose categories $a_p\modu_\cA$ of modules in $\cA$ describe the factorization homology of the associated marked, punctured surface $\Sigma(P)$.

To focus on the main ideas, \emph{we will first assume that $\cA$ is balanced}.  This simply allows us to choose the most convenient framing on a given surface for which to do computations.  In Section~\ref{sec:framed} we remove this assumption, and state the parallel results for framed surfaces, which follow easily by modifying proofs in the oriented case.

\begin{definition}
A \emph{gluing pattern} is a bijection,
$$P:\{1,1',\ldots, n,n'\}\xrightarrow{\sim} \{1,\ldots 2n\},$$
such that $P(i)<P(i')$, for all $i = 1,\ldots, n$.
\end{definition}
A convenient notation to specify a gluing pattern $P$ is by enumerating the tuple $(P(1),P(1'),\ldots, P(n),P(n'))$.  We highlight for future use the following gluing patterns,
\begin{equation}Ann:=(1,2), \qquad T^2\backslash D^2 := (1,3,2,4).\label{eqn:building-blocks}\end{equation}
By convention, we allow the null gluing pattern $D^2:\emptyset\to\emptyset$.  See the Figures~\ref{fig:annulus-pants} and~\ref{torus-gluing-pattern}, in Section \ref{sec:examples} for examples of gluing patterns.

\begin{remark}
Recall that every (connected) oriented surface with boundary can be described as the thickening of a (connected) so-called fat graph or ribbon graph, i.e. a graph equipped with a cyclic ordering at each vertex. A \emph{ciliated graph} is defined similarly, but with a choice of a linear instead of cyclic ordering at each vertex. Ciliated graphs enter the definition of the Fock-Rosly Poisson structure on the representation variety of the underlying surface~\cite{Fock1999}, and of its quantization by Alekseev~\cite{Alekseev1993}. A gluing pattern is the same as a ciliated graph with one vertex. We use this notion for the sake of simplicity, but we note that both Definition~\ref{def:glued-alg} and our main result Theorem~\ref{thm:main} has a straightforward analog for an arbitrary ciliated graph.
\end{remark}

\begin{figure}[h]\centering
\includegraphics[width=5in]{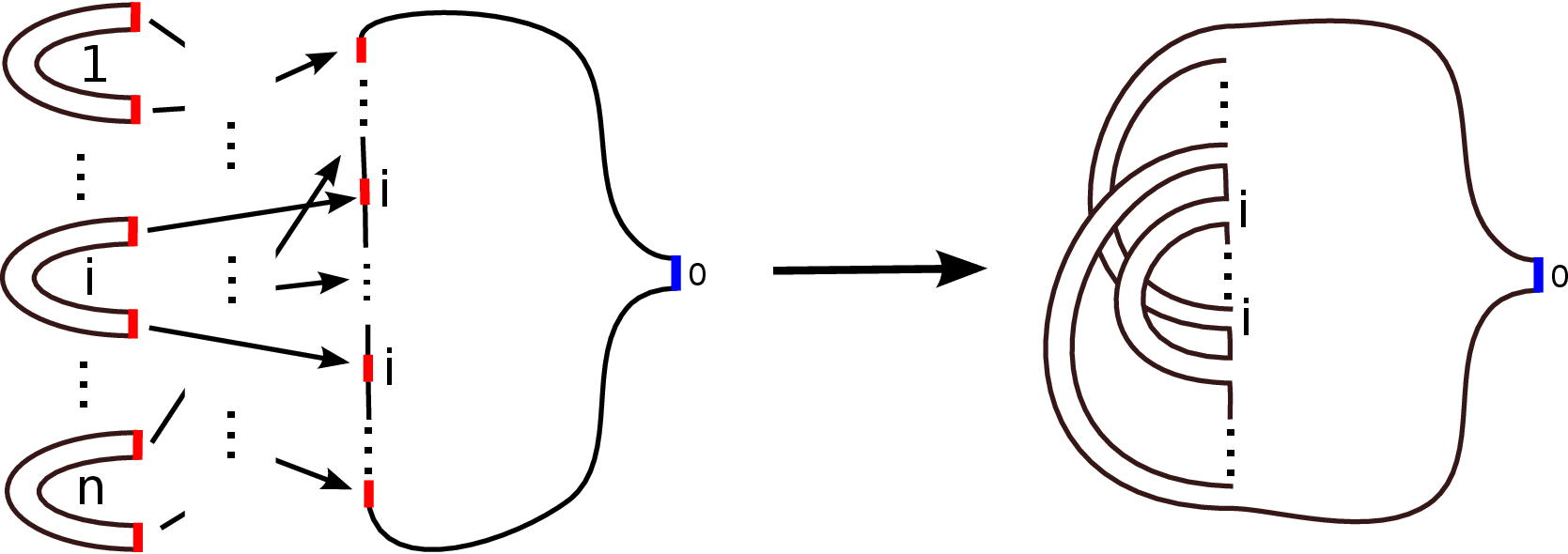}
\caption{The punctured surface $\Sigma(P)$ constructed from a gluing pattern $P$.}
\label{gluing-pattern}
\end{figure}

A gluing pattern $P$ determines a surface $\Sigma(P)$, with a single marked boundary interval $0$, as follows:  we begin with a disk $D^2$ with $2n+1$ boundary intervals labeled $0,1,\ldots 2n$, and then glue each of $n$ handles $H_i$, with marked intervals, $i$ and $i'$ to $P(i)$ and $P(i')$, respectively.

\begin{definition} The \emph{disjoint union} of gluing patterns $P:\{1,1',\ldots,n,n'\}\to \{1,\ldots 2n\}$ and $Q:\{1,1',\ldots,m,m'\}\to \{1,\ldots,2m\}$ is
$$P\sqcup Q:\{1,1',\ldots, m+n, (m+n)'\} \to \{1,\ldots, 2(m+n)\},$$
$$i,i' \mapsto \left\{ \begin{array}{ll}P(i), P(i'), & \textrm{if $i\leq n$}\\ Q(i-n)+2n, Q((i-n)')+2n, & \textrm{if $i>n$} \end{array}\right.$$
\end{definition}

See Figure \ref{fig:geng} for an example, yielding an arbitrary genus $g$ surface with $r$ punctures from disjoint unions of the basic building blocks \eqref{eqn:building-blocks}.

\begin{definition}
We say that handles $H_i$ and $H_j$, with $i<j$ are:
\begin{itemize}
\item \emph{positively linked} if $P(i) < P(j) < P(i') < P(j')$,\\
\emph{negatively linked} if $P(j) < P(i) < P(j') < P(i')$,
\item \emph{positively nested} if $P(i) < P(j) < P(j') < P(i')$,\\
\emph{negatively nested} if $P(j) < P(i) < P(i') < P(j')$,
\item \emph{positively unlinked} if $P(i) < P(i') < P(j) < P(j')$,\\
\emph{negatively unlinked} if $P(j) < P(j') < P(i) < P(i')$,
\end{itemize}
where the sign is $(+)$ if $i<j$, and $(-)$ if $i>j$.
\end{definition}

\begin{definition} We define the crossing morphisms,
$$L, N, U: \oa\ot\oa\to \oa\ot\oa,$$
as follows (diagrams are read from bottom to top):
$$\begin{tikzpicture}
\draw (1.5,-5) node{Linked crossing $L$};
\braid[
 line width=2pt,
 style strands={1,2}{blue},
] (braid2) at (0,0) s_2^{-1} s_1-s_3 s_2 ;
\draw (braid2-4-e) node[below left]{$\oa$};
\draw (braid2-2-e) node[below left]{$\oa$};
\draw (braid2-4-s) node[above left]{$\oa$};
\draw (braid2-2-s) node[above left]{$\oa$};

\draw (6.5,-5) node{Nested crossing $N$};
\braid[
 line width=2pt,
 style strands={1,2}{blue},
] (braid2) at (5,0) s_2^{-1} s_1^{-1}-s_3 s_2 ;
\draw (braid2-4-e) node[below left]{$\oa$};
\draw (braid2-2-e) node[below left]{$\oa$};
\draw (braid2-4-s) node[above left]{$\oa$};
\draw (braid2-2-s) node[above left]{$\oa$};

\draw (11.5,-5) node{Unlinked crossing $U$};
\braid[
 line width=2pt,
 style strands={1,2}{blue},
] (braid3) at (10,0) s_2 s_1-s_3 s_2 ;
\draw (braid3-4-e) node[below left]{$\oa$};
\draw (braid3-2-e) node[below left]{$\oa$};
\draw (braid3-4-s) node[above left]{$\oa$};
\draw (braid3-2-s) node[above left]{$\oa$};
\end{tikzpicture}$$
\end{definition}
\begin{remark}
	Those diagrams make sense because $\oa$, as an object in $\cA$, can be written as a colimit over objects of the form $V^* \ot V$ (see Remark~\ref{rmk:coend} and Equation~\eqref{coend-br}).
\end{remark}
\begin{remark}
	The unlinked crossing operator $U$ is nothing but the braiding of $\cA$ applied to $\oa\ot \oa$.
\end{remark}
\begin{definition}\label{def:glued-alg} For a gluing pattern of rank $n$, we define the algebra $a_{P}$ to be the object $\oa^{\ot n}$ in $\cA$. The multiplication is defined as follows: denote by $\oa^{(i)}$ the $i$th copy of $\oa$ inside $\oa^{\ot n}$. Then for each pair of indices $1\leq i<j \leq n$ the restriction of the multiplication to $\oa^{(i)}\ot \oa^{(j)}\subset a_P$ is given by:
$$\oa^{(i)}\ot\oa^{(j)} \ot \oa^{(i)} \ot \oa^{(j)} \xrightarrow{\id\ot C \ot \id} \oa^{(i)}\ot\oa^{(i)}\ot\oa^{(j)}\ot\oa^{(j)} \xrightarrow{m\ot m}\oa^{(i)}\ot\oa^{(j)},$$ 
where $C= L^{\pm 1}$ (resp. $N^{\pm 1}, U^{\pm 1})$, if $H_i$ and $H_j$ are ($\pm$)-linked (resp. ($\pm$)-nested, ($\pm$)-unlinked), and $m$ denotes the multiplication on $\oa$.
\end{definition}

\begin{remark}We note in passing that the notations $A_\Sigma$ and $a_P$ for the algebras defined in t	his section are potentially ambiguous, as they depend essentially on the choice of braided tensor category $\cA$.  However, we will suppress this dependence from our notation, as it will always be clear from context.\end{remark}

	\begin{proposition}\label{prop:disjoint}
	Let $P=P_1\amalg P_2$ be the disjoint union of two gluing pattern. The algebra $a_P$ is the braided tensor product of $a_{P_1}$ and $a_{P_2}$.
\end{proposition}
\begin{proof}
By construction, any pair of a handle in $P_1$ and a handle in $P_2$ is unlinked, hence the cross relations between the corresponding $\oa$ factors are those of the braided tensor product. The result thus follows from the hexagon axioms.
\end{proof}

The following is the main result of this section:
\begin{theorem}\label{thm:main}
	Let $\cA$ be an abelian rigid balanced braided tensor category in $\LFP$. We have an isomorphism of algebras $A_{\Sigma(P)}\cong a_P$, and an equivalence of categories,
$$\int_{\Sigma(P)}\cA \simeq a_{P}\modu_{\cA}\simeq A_{\Sigma(P)}\modu_\cA$$
\end{theorem}

\begin{proof}
We may deform the attaching disk of $\Sigma(P)$ in such a way that the $0$-marked interval is on the right, and all other marked intervals are on the left. 

We have $\int_D\cA\simeq\cA$ as a category; however the markings on $D$ induce a $\cA^{\bt 2n}$-$\cA$-bimodule structure,
$$(a_1\bt \cdots \bt a_{2n}) \bt b \bt c \mapsto a_1\ot \cdots \ot a_{2n} \ot b \ot c,$$
which bimodule we denote by ${}_{2n}\cA_\cA$. 

For each handle $H_i$, we have $\int_{H_i}\cA \simeq \cA_{\cA\cA}$, the category $\cA$ with its right regular $\cA\boxtimes \cA$-module structure, $b\bt (a_1\bt a_2) \mapsto b\ot a_1 \ot a_2$.

Likewise, given $n$ handles $H_1, \ldots H_n$ we have $\int_{H_1\cup \cdots \cup H_n}\cA \simeq \cA^{\bt n}$ as a category.  We make this a right $\cA^{\bt 2n}$-module using $P$:
\[
	(a_1 \bt \cdots \bt a_{n'}) \bt (b_1\bt\ldots \bt b_{2n}) \mapsto (a_1\ot b_{P(1)}\ot b_{P(1')})\bt \cdots \bt  (a_{n}\ot b_{P(n)} \ot b_{P(n')}).
\]
We denote this right $\cA^{\bt 2n}$-module by $\cA^P$.

We note that the module structures on $A^P$ and $_{2n}\cA_\cA$ are precisely those induced by the markings on the left hand side of Figure \ref{gluing-pattern}.  Thus, by the excision property for factorization homology, we have:

\begin{align}\label{eq:rel1}
	\int_{\Sigma(P)}\cA \simeq \cA^{P} \underset{\cA^{\boxtimes 2n}}{\bt} \,_{2n}\cA_\cA.
\end{align}
Let $\tau_P \in S_{2n}$ be the permutation obtained by precomposing $P$ with the map
\[
	\{1,\dots,2n\}\rightarrow \{1,1',\dots, n,n'\}
\]
defined by
\[
i \mapsto
\begin{cases}
	i/2& \text{if }$i$\text{ is even}\\
	(i/2)'& \text{if }$i$\text{ is odd}
\end{cases}
\]
Applying Proposition~\ref{prop:1AA}, we may identify $\cA^P$ with the category of modules in $\cA^{\bt 2n}$ for an algebra $\underline{\End}(\mathbf{1}_{\cA\cA})^P$ obtained by applying $\tau_P$ to $\underline{\End}(\mathbf{1}_{\cA\cA})^{\bt n}$. 

The bimodule $_{2n}\cA_\cA$ is simply that induced by the iterated tensor product functor $T^{2n}: \cA^{\bt 2n}\to \cA$, which itself carries the structure of a tensor functor using formula \eqref{eq:tensor}.  Hence we may apply Theorem \ref{base-change} to conclude,
\begin{align}\label{eq:rel2}
	\int_{\Sigma(P)}\cA \simeq T^{2n}(\underline{\End}(\mathbf{1}_{\cA\cA})^P)\modu_\cA,
\end{align}
as a right $\cA$-module category.

Note that equations \eqref{eq:rel1} and \eqref{eq:rel2}, combined with Theorem \ref{base-change}, yield an isomorphism,
$$\widetilde{a_P}\cong A_S.$$
Let us now explain how to identify $\tilde{a}_P:=T^{2n}(\underline{\End}(\mathbf{1}_{\cA\cA})^P)$ with the algebra $a_P$. First, we consider the subalgebras $\oa^{(i,i')}=\underline{\End}(\mathbf{1}_{\cA_{P(i)}\cA_{P(i')}})$, and their images $\oa^{(i)}:=T(\oa^{(i,i')})$.  The multiplication induces an isomorphism,

$$m: \oa^{(1)} \ot \cdots \ot \oa^{(n)} \to \tilde{a}_p,$$
on the level of objects.  It remains only to compute the pairwise cross relations between factors.  Note that $\oa^{(i,i')}$ and $\oa^{(j,j')}$ commute in $\cA^{\bt 4}$, because they occupy different tensor factors.  Hence we have the following commutative diagram:
\begin{equation}\label{batman}\resizebox{.9\hsize}{!}{\xymatrixcolsep{1pc}
\xymatrix{
& T^4(\oa^{(i,i')}\ot\oa^{(j,j')})\ar[ddr]^{T^4(m)} &\underset{can}{=} & T^4(\oa^{(j,j')}\ot \oa^{(i,i')}) \ar[ddl]_{T^4(m)}&\\
\oa^{(i)}\ot\oa^{(j)}\ar[drr]_m \ar[ur]^{J_{ij}} & && & \oa^{(j)}\ot\oa^{(i)}\ar[ul]_{J_{ji}}\ar[dll]^m\\
&& a_P &&
}}\end{equation}

The ordering on the tensor factors in $\cA^P$, and hence the value of $J_{ij}$, depends on the gluing pattern.  For example, in the positively linked case, as indicated in Figure \ref{linked}, we have $J_{12}=\id\ot\sigma\ot\id,$ and  $J_{21}=(\sigma\ot\sigma)\circ(\id\ot\sigma\ot\id),$ hence:
$$m|_{\oa^{(2)}\ot \oa^{(1)} }  = m|_{\oa^{(1)}\ot\oa^{(2)}}\circ J_{12}^{-1}J_{21} = m|_{\oa^{(1)}\ot\oa^{(2)}}\circ L^+_{12},$$
as claimed.  The other five cases follow a similar computation.

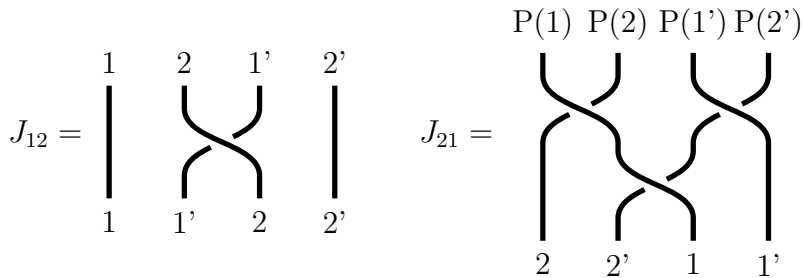
\begin{figure}[h]
$$
J_{12} =
\raisebox{-0.5\height}{\begin{tikzpicture}
\braid[
 number of strands=4,
 line width=2pt,
] (braid2) at (0,0) s_2 ;
\draw (braid2-4-e) node[below]{2'};
\draw (braid2-2-e) node[below]{2};
\draw (braid2-3-e) node[below]{1'};
\draw (braid2-1-e) node[below]{1};
\draw (braid2-4-s) node[above]{2'};
\draw (braid2-2-s) node[above]{2};
\draw (braid2-3-s) node[above]{1'};
\draw (braid2-1-s) node[above]{1};
\end{tikzpicture}}\qquad
J_{21} =
\raisebox{-0.5\height}{\begin{tikzpicture}
\braid[
 line width=2pt,
] (braid2) at (0,0) s_1-s_3 s_2 ;
\draw (braid2-4-e) node[below]{2'};
\draw (braid2-2-e) node[below]{2};
\draw (braid2-3-e) node[below]{1'};
\draw (braid2-1-e) node[below]{1};
\draw (braid2-4-s) node[above]{P(2')};
\draw (braid2-2-s) node[above]{P(2)};
\draw (braid2-3-s) node[above]{P(1')};
\draw (braid2-1-s) node[above]{P(1)};
\end{tikzpicture}}$$
\caption{In the linked case, we have $P(1,1',2,2') = (1,3,2,4)$.}\label{linked}
\end{figure}

\end{proof}

\subsection{The framed case}\label{sec:framed}

Let us now suppose that $\cA$ is a braided tensor category in $\LFP$, which is not necessarily balanced, and that accordingly $S$ is a \emph{framed} surface, equipped with a marked boundary interval.  In this section, we will compute the algebra $A_S$.  To this end, we need to refine the notion of gluing pattern.  In place of the ``handle and comb" decomposition of the previous section, we have instead a ``coil and comb" decomposition.  Here a ``coil" is a framing of each handle, relative to its two marked boundary intervals.  Clearly $S$ may be presented as glued from a collection of coils onto a common disk.  We call a framing of a disk relative to its two marked boundary intervals a coil, since we can induce the framing from the blackboard framing, as for handles, but only under an immersion, which resembles a coil; see Figure~\ref{fig:geng}.  We encapsulate the underlying combinatorics as follows:

\begin{definition}
A coiling of rank $n$ is a function $\xi:\{1,\ldots n\}\to \ZZ$.  A coiled gluing pattern is a pair consisting of gluing pattern $P$, and a coiling $\xi$, each of rank $n$.
\end{definition}

Accordingly, let $\Sigma(P,\xi)$ denote the surface obtained by gluing in the coils framed by each $\xi(k)$ along the pattern $P$ (see Figure~\ref{fig:geng}).

\begin{definition} For a coiled gluing pattern $(P,\xi)$ of rank $n$, we define the algebra $a_{P,\xi}$ to be the object $\oak{\xi(1)}\ot \cdots \ot \oak{\xi(n)}$ in $\cA$. The cross relations between different factors is the same as in Definition \ref{def:glued-alg}.
\end{definition}

\begin{theorem}
Let $\cA$ be an abelian rigid braided tensor category in $\LFP$. We have an isomorphism of algebras $A_{\Sigma(P,\xi)}\cong a_{P,\xi}$, and an equivalence of categories,
$$\int_{\Sigma(P,\xi)}\cA \simeq a_{P,\xi}\modu_{\cA}\simeq A_{\Sigma(P,\xi)}\modu_\cA$$
\end{theorem}
\begin{proof}
We simply note that the $\cA\bt\cA$-module associated to each coil is a copy of $\cA^{(k)}$, so that in place of Theorem \ref{thm:tr-monad}, we instead appeal to Corollary~\ref{cor:twisted}.  Crucially, the computation of the cross relations in Theorem \ref{thm:main} remain entirely unchanged.
\end{proof}

\begin{remark}
We are now in a position to compare concretely two related facts:  on the one hand, if $\cA$ is in fact balanced, then the algebra $a_{P,\xi}$ is independent of a coiling, because the theory $\cA$ defines is oriented.  On the other hand, a balancing is precisely an $\cA\bt\cA$-module equivalence between $\cA$ and $\cA^{(1)}$ (and hence between any two twists $\cA^{(k)}$).  The choice of a ribbon element therefore determines a canonical isomorphism between $\oa$ and $\oak{k}$ for any $k$, and hence between $a_P$ and $a_{P,\xi}$.
\end{remark}

\subsection{Mapping class group actions}\label{sec:mcg}

For the sake of simplicity we return again to oriented surfaces (and hence we assume that $\cA$ is balanced); everything can be extended in a straightforward way to the framed setting. Let $M=\Sigma_{g,n+1}$ be a surface of genus $g$ with $n+1$ circle boundary components. By functoriality of factorization homology, every orientation-preserving self-diffeomorphism $f$ of $M$ induces an auto-equivalence of $\int_M \cA$ as a pointed category, i.e. an equivalence of categories $f_*$ from $\int_M \cA$ to itself, together with an isomorphism $f_*(\cO_{\cA,M})\rightarrow \cO_{\cA,M}$. Likewise, any isotopy between two such diffeomorphisms induces a natural isomorphism between the corresponding functors intertwining the isomorphisms from $\cO_{\cA,M}$ with its images. In other words, there is an action of the truncation $\pi_{\leq 1}(Diff(M))$ on the category $\int_M \cA$ for which the distinguished object has a canonical structure of an homotopy fixed point.

Clearly, one obtains the same group if one considers diffeomorphisms and isotopies preserving a small annular neighborhood $\tilde b$ of $b$. Fix once and for all an embedding of a small disk inside $\tilde b$. To this embedding corresponds a functor $\iota:\cA \rightarrow \int_M \cA$ which is isomorphic to the functor $\act_{\cO_{\cA,M}}$. Fix such an isomorphism once and for all. Given a diffeomorphism preserving $\tilde b$, the corresponding functor $F$  commute with $\iota$ strictly by definition. In particular we have that $F(\cO_{\cA,M})$ is \emph{equal}, rather than isomorphic, to $\cO_{\cA,M}$. Therefore, the pointed structure on $F$ induces an automorphism of the object $\cO_{\cA,M}$. Now an isotopy between two such diffeomorphisms, itself preserving $\tilde b$, induces an isomorphism between the corresponding pointed functors. In particular the associated automorphisms of $\cO_{\cA,M}$ are equal, which implies the following:
\begin{proposition}
	Fix an interval on the boundary of $M$ and let $A_M$ be the corresponding algebra in $\cA$. Then there is a canonical action of $\Gamma_{n,1}^g$ on $A_M$ by algebra automorphisms.
\end{proposition}

\subsection{Braid group representations}\label{sec:braid}
Let $M$ be a (connected) framed (resp, oriented) surface and fix a framed (resp. oriented) embedding $\iota:(\RR^2)^{\sqcup n} \hookrightarrow M$. It follows from the formalism of factorization homology that we have an action of the fundamental group $\pi_1(\operatorname{Emb}((\RR^2)^{\sqcup n},M), \iota)$ on the associated functor
	\[
		F_{\iota}: \cA^{\boxtimes n} \rightarrow \int_M \cA,
	\]
which is clearly $S_n$-equivariant. This fundamental group is nothing but the pure braid group of $M$, in the framed case, or the framed braid group in the oriented case. 

	For $n=1$ and for any choice of $\iota$, the functor $F_{\iota}$ is isomorphic to the free module functor $A_M \ot-$ and for $n \geq 1$ one can arrange so that $F_{\iota}$ coincides with
	\[
V_1 \bt \dots \bt V_n \rightarrow A_M \ot V_1 \dots \ot V_n.
	\]

	Therefore for each $\gamma$ in the (framed) braid group $B_n(M)$ of $M$ we obtain natural isomorphisms,
	\[
		A_M \ot V_1 \dots \ot V_n \longrightarrow A_M \ot V_{\bar\gamma(1)} \dots \ot V_{\bar\gamma(n)}
	\]
where $\bar\gamma$ is the image of $\gamma$ through $B_n(M)\rightarrow S_n$.  In particular if all of the $V_i$ are equal, we have a representation of the surface braid group on the tensor product.  In the next section, we will compare these with several constructions of braid group actions on quantum algebras already appearing in the literature.

\section{Examples}\label{sec:examples}
Let us now catalog some well-known algebras which arise as instances of the algebras $A_S$ constructed in the preceding section.  First, we need to recall some preliminaries about the best-understood source of examples of braided tensor categories, namely the categories $\Rep H$ of locally finite-dimensional modules for a quasi-triangular Hopf algebra $H$, which we define below.

Recall that for a Hopf algebra $H$, the category $H\modu$ of left $H$-modules has a natural tensor category structure via the coproduct. Similarly, there is a notion of a quasitriangular
Hopf algebra (See e.g.~\cite{Kassel1995} for an exposition), which encodes the extra structure to
determine a braided tensor category structure on $H\modu$.  The additional data consists of a choice of invertible element $R\in H\ot H$, called the universal $R$-matrix, with a list of properties yielding that $\sigma_{V,W} = \tau_{V,W}\circ R: V\ot W\to W\ot V$ defines a braiding on $H\modu$, where $\tau_{V,W}$ swaps the tensor components.   
Finally, Hopf algebras, as opposed to bialgebras, have an antipode which implies that there is a natural $H$-module structure on the vector space dual of any $H$-module. There are two, however, closely related issues if we consider the category of all $H$-modules: the first is that often, in examples, the $R$-matrix lives in a completion of $H \ot H$ so that its action is usually not defined on the whole of $H\modu$. Secondly, $H\modu$ is not rigid in the sense of Definition~\ref{def:rigid}. Both issues are fixed by considering instead the category $\Rep H$ of locally finite modules, i.e. the ind-completion of the category of finite dimensional modules. Equivalently, this is the category of comodules over the Sweedler dual $H^{\circ}$ of $H$. Accordingly, we will call quasi-triangular any Hopf algebra which has the property that $\Rep H$ is braided, or equivalently such that $H^{\circ}$ is coquasi-triangular.

\paragraph{\textbf{Convention}} We will fix throughout this section a quasi-triangular Hopf algebra, and let $\cA=\Rep H$ denote its braided tensor category of locally finite-dimensional modules, as above.
\begin{example}
The quantum group $U_q(\mf{g})$ is a one-parameter deformation of the universal enveloping algebra $U(\mf{g})$ of a reductive algebraic group (see \cite{Kassel1995} for a survey).  The universal $R$-matrix is an infinite sum in the so-called Serre generators, so a priori only the category $\Repq G = U_q(\mf{g})\modu_{lf}$ of locally-finite dimensional modules has a well-defined braiding.
\end{example}

In the remainder of this section, we will highlight a number of isomorphisms between special cases of the algebras $A_S$ and well-known algebras in the representation theory of quantum groups.

\subsection{Braided dual of a quasi-triangular Hopf algebra}
Given a Hopf algebra $H$, its restricted dual $H^\circ$ is the subalgebra of the full linear dual $H^*$ spanned by so-called \emph{matrix coefficients} of finite dimensional representations.  The matrix coefficient $c_{f,v}$ associated to a finite-dimensional representation $V$ of $H$, a vector $v\in V$ and a covector $f\in V^*$ is the linear functional defined by $c_{f,v}(h) = f(hv)$.

\begin{example}
The restricted dual Hopf algebra to a quantum group $U_q(\mf{g})$ is called the Fadeev-Reshetikhin-Takhtajan (FRT) algebra.  It is a one-parameter deformation of the algebra $\cO(G)$ of functions on the simply connected group $G$ integrating $\mf{g}$, along the so-called Sklyanin Poisson bracket.
\end{example}

It was observed early on by Majid that the restricted dual of an Hopf algebra $H$, in particular the FRT algebra, was not a module-algebra under the coadjoint action
\[
	Ad(h)(f):=x\mapsto f(h^{(1)}xS(h^{(2)}))
\]
where $h \in H, f\in H^*$ and $\Delta(h)=\sum h^{(1)}\ot h^{(2)}$. In particular quantizations of adjoint-equivariant structures were not possible in that framework.  Majid therefore introduced an algebra called the \emph{braided dual} algebra $\widetilde H$ to a quasi-triangular Hopf algebra.  This defines a new algebra structure on the restricted dual (the same underlying vector space), with the key difference being that the multiplication of matrix coefficients twisted by a natural expression in the $R$-matrices. 

By a result of Lyubashenko~\cite{Lyubashenko1995} (see also~\cite{Majid1995}), the braided dual $\widetilde{H}$ is isomorphic to the algebra $\oa$ constructed via the CoEnd construction~\eqref{coend-br}.

\begin{example} The braided dual of $U_q(\mf{g})$ is a quantization of the algebra $\cO(G)$, along the so-called Semenov-Tian-Shansky Poisson bracket (so it is also sometimes denoted by $\cO_q(G)$). This Poisson structure is known to coincide with the Fock-Rosly Poisson structure on $G\cong\RV(Ann)$ (see~\cite{Fock1999} and Section~\ref{sec:CharVar} ) and the braided dual coincides with Alekseev moduli algebra of the annulus~\cite{Alekseev1993}.

In this case, the braided dual goes by many names: it has appeared in the literature as the ``equivariantized quantum coordinate algebra" of Majid~\cite{Majid1995}, ``the reflection equation algebra" (see Remark \ref{rmk:basis} below)~\cite{Donin2003,Donin2002}, and the ``quantum loop algebra" of Alekseev and Schomerus~\cite{Alekseev1993,Alekseev1996,Alekseev1996a}. A comprehensive reference is the text \cite{Klimyk1997}. 
\end{example}

Another important feature of quasi-triangular Hopf algebras is that there is a natural linear map
\begin{align*}
	H^\circ &\to H,\\
	f&\mapsto (f \ot \id)(R_{21}R_{12}) 
\end{align*}
which is in fact an algebra morphism for the algebra structure of $\widetilde H$ $(=H^\circ)$. A quasi-triangular Hopf algebra is called \emph{factorizable} if this map is injective, and it turns out that quantum groups are factorizable in that sense~\cite{Reshetikhin1988} (this should be thought of as a quantum analog of the non-degeneracy of the Killing form). The image of the above map coincides with $U'_q(\mf g)$ (see e.g.~\cite{Baumann1998}), where $U'_q(\mf g)$ denotes the ad-locally finite part of $U_q(\mf g)$, i.e. the vectors which generate a finite-dimensional orbit under the quantum adjoint action.

\subsection{The factorization homology of the annulus}
The annulus is homeomorphic to $\Sigma(Ann)$, where we recall that $Ann(1)=1, Ann(1')=2$. 
\begin{figure}[h]\centering
\includegraphics[width=5in]{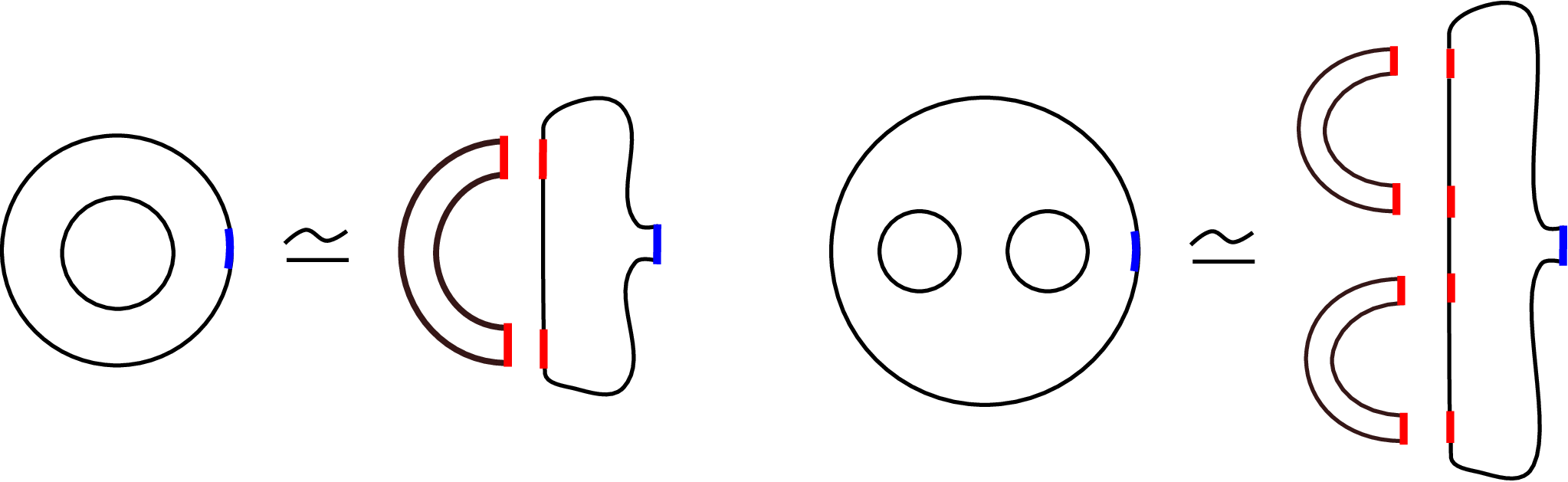}
\caption{The annulus constructed from a gluing pattern yields $A_{Ann}\cong a_{Ann}\cong \oaH$.  The pair of pants yields the braided tensor product $a_{Pan}\cong \oaH \widetilde{\ot}\oaH$.}\label{fig:annulus-pants}
\end{figure}

\begin{corollary}\label{refl-eqn-theorem}
Let $\cA=\Rep H$.  We have the following identifications:
\[
	A_{Ann} \cong \oaH, \qquad \int_{Ann} \Rep H \simeq \oaH\modu_{\Rep H}.
\]\end{corollary}

The braided dual appearing here was used in~\cite{Donin2003a,Donin2003,Donin2002} in order to produce universal solutions of the so-called reflection equation: it is observed in \emph{loc. cit.} that the canonical element $L\in \widetilde H \ot H$ (recall that $\widetilde H=H^*$ as vector spaces) satisfies the following equation
\begin{equation}\label{eq:DKM}
	(\id \ot \Delta)(L)=(R^{1,2})^{-1}L^{0,2}R^{1,2}L^{0,1}
\end{equation}
in $\widetilde H \ot H^{\ot 2}$ and where $\widetilde H$ has index 0, and is in fact the universal solution of this equation in the sense that if $A$ is any algebra and $L_A \in A \ot H$ any solution of~\eqref{eq:DKM}, then there is a unique algebra morphism $\widetilde H \rightarrow A$ mapping $L$ to $L_A$. Equation~\eqref{eq:DKM} implies that $L$ satisfies the \emph{reflection equation}:
\begin{equation}\label{eqn:REA}
	R_{21}L_{0,2}R_{12}L_{0,1}=	L_{0,1}R_{21}L_{0,2}R_{1,2}.
\end{equation}

\begin{remark}\label{rmk:basis}Equation~\eqref{eqn:REA} is called the reflection equation, and it expresses the commutation relations in the braided dual.  It is for this reason that one may call the braided dual Hopf algebra (and its generalizations to arbitrary braided tensor categories) the reflection equation algebra.

Choosing bases for objects $V,W$, we have elements $(a_V)^i_j:=v^i\ot v_j$ and $(a_W)^k_l =w^k\otimes w_l$ of $\oaH$, and the collection of all such elements spans $\oaH$.  The commutation relations \eqref{eqn:REA} between them may be written explicitly as follows:

$$\sum_{k,l,m,p}R^{ij}_{kl}(a_V)^l_mR^{mk}_{op}(a_W)^p_r = \sum_{s,t,u,v}(a_W)^i_sR^{sj}_{tu}(a_V)^u_vR^{vt}_{or}.$$
 \end{remark}

Roughly speaking, the reflection equation is related to the braid group of the annulus the same way the quantum Yang-Baxter equation is related to the braid group of the plane~\cite{Kulish1993}. Crucially, the axioms of braided tensor categories, and in particular those of quasi-triangular Hopf algebras, not only provide representations of the braid group of the plane, but imply that those are naturally compatible with the operadic structure of the braid group, i.e. with the operation of ``doubling a strand''. Equation~\eqref{eq:DKM} plays a similar role for the braid group of the annulus.

The annular braid group representations of Donin, Kulish, and Mudrov can now be recovered as follows.  As explained in Section~\ref{sec:braid}, factorization homology of any framed manifold carries representations of the corresponding braid groups, naturally compatible with strand doubling operations. One notes that the carrying space for these representations is the tensor product $\widetilde{H}\ot V_1\ot\ldots \ot V_n$, where $V_1,\ldots, V_n$ may be taken arbitrarily from the $\Rep H$.  This is precisely the carrying space of the Donin-Kulish-Mudrov representations.  It follows from their respective universal properties (or alternatively, by a direct computation), that the two representations of the annular braid group on this space coincide.  We can state this more formally, following the notation from Section \ref{sec:braid}
\begin{proposition}\label{prop:AnnBrRec}
We have a canonical isomorphism of annular braid group representations, 
$$\operatorname{For}(F_\iota(V_1\bt \cdots\bt V_n)) \cong \widetilde{H}\ot V_1\ot\ldots \ot V_n,$$
where $\operatorname{For}$ denotes the forgetful functor from $\int_{Ann}\Rep H$ to $\Vect$, equipped with the action by isotopies of disk inclusions, and where the action on the RHS is the Donin-Kulish-Mudrov action. 
\end{proposition}
Further implications of this correspondence are taken up in the sequel, \cite{Ben-Zvi2016}.

\begin{remark}Let us stress here that for the strand doubling operations to be well-defined one needs the underlying manifold to be framed (the framing determines in which direction to push the copy of the strand when doubling). Since there are $\ZZ$ many framings on an annulus, there are in fact \emph{a priori} $\ZZ$ many versions of equation~\eqref{eq:DKM} and as many versions of the braided dual.  This is discussed in detail in Section \ref{sec:twisted-framings} below.
\end{remark}

\subsection{Factorization homology of several-punctured disk}

Let us consider now the $r$-punctured disc, which we denote $\Sigma_{0,r+1}$ (following the usual convention to view the disc itself as a once-punctured sphere).  We may regard this simply as a disjoint union of the annular gluing pattern, and hence Proposition~\ref{prop:disjoint} implies:
 
\begin{corollary}
Let $\cA=\Rep H$.  We have the following identifications:
\[
	A_{\Sigma_{0,r+1}} \cong \oaH^{\ot r}, \qquad \int_{\Sigma_{0,r+1}} \Rep H \simeq (\oaH)^{\ot r} \modu_{\Rep H},
\]
where $\oaH^{\ot r}$ denotes the braided tensor power of the algebra $\oaH$.
\end{corollary}

In~\cite{Donin2003a}, Donin--Kulish--Mudrov observed that the $n$th braided tensor power of the reflection equation algebra provides representations of the braid group of an $n$-punctured disk.  As in Proposition \ref{prop:AnnBrRec}, we note that the carrying spaces of both representations coincide, and that each satisfies the same universal property with respect to strand doubling, so that the braid group representations they induce are isomorphic.

\subsection{Quantum differential operators and the elliptic double}

Another basic algebra associated to a quantum group $U_q(\mf g)$ is the ring $\cD_q(G)$ of quantum differential operators on the group $G$.  Recall that the classical algebra of differential operators on $G$ can be constructed as a semi-direct product $\cD(G)=U(\mf g)\ltimes \cO(G)$ where $\mf g$ acts on $\cO(G)$ via right invariant differential operators. This construction can be generalized to any Hopf algebra by considering the so-called Heisenberg double $H\ltimes H^*$, where $H$ acts by the left coregular action; in the case $H=U_q(\mf g)$ one obtains this way a deformation of $\cD(G)$.

It turns out to be slightly more natural to consider instead the semi-direct product $\cD_q(G)=U'_q(\mf g)\ltimes U_q(\mf g)^*$ where $U'_q(\mf g)$ is the ad-locally finite part of $U_q(\mf g)$. This is a deformation of $\cO(G\times G)$ in the direction of a certain Poisson structure also introduced by Semenov-Tian-Shansky~\cite{Semenov1994} as a Poisson-Lie version of the cotangent bundle $T^*G$, and coincides with the Fock-Rosly Poisson structure on $\RV(T^2\backslash D)$ (see again~\cite{Fock1999} and Section~\ref{sec:CharVar}).

It is proven in~\cite{Varagnolo2010} that one obtains the same algebra using the braided instead of the ordinary dual, i.e. there is an equivariant algebra isomorphism $\cD_q(G)\cong U'_q(\mf g)\ltimes \cO_q(G)$ (this holds for any quasi-triangular Hopf algebra). 

	In \cite{Brochier2017} an algebra called the elliptic double $\cD_H$ was introduced, which carries representations of the braid group of the punctured torus. It was shown there (following \cite{Varagnolo2010}) that if $H$ is factorizable, then the factorizing isomorphism extends to an isomorphism between the elliptic and the Heisenberg double (though in general they can be distinct).  We note that the elliptic double for $U_q(\mf g)$ also coincides with the moduli algebra of the punctured torus of Alekseev \cite{Alekseev1993}

\subsection{Factorization homology of the punctured torus}\label{sec:elliptic}

In fact the elliptic double can be regarded as an instance of Theorem \ref{thm:main}, as follows.  The punctured torus $T^2\backslash D$ is homeomorphic to $\Sigma(P)$, where $P(1)=1, P(1')=3, P(2)=2, P(2')=4$.  See Figure \ref{torus-gluing-pattern}.

\begin{figure}[h]\centering
\includegraphics[width=5in]{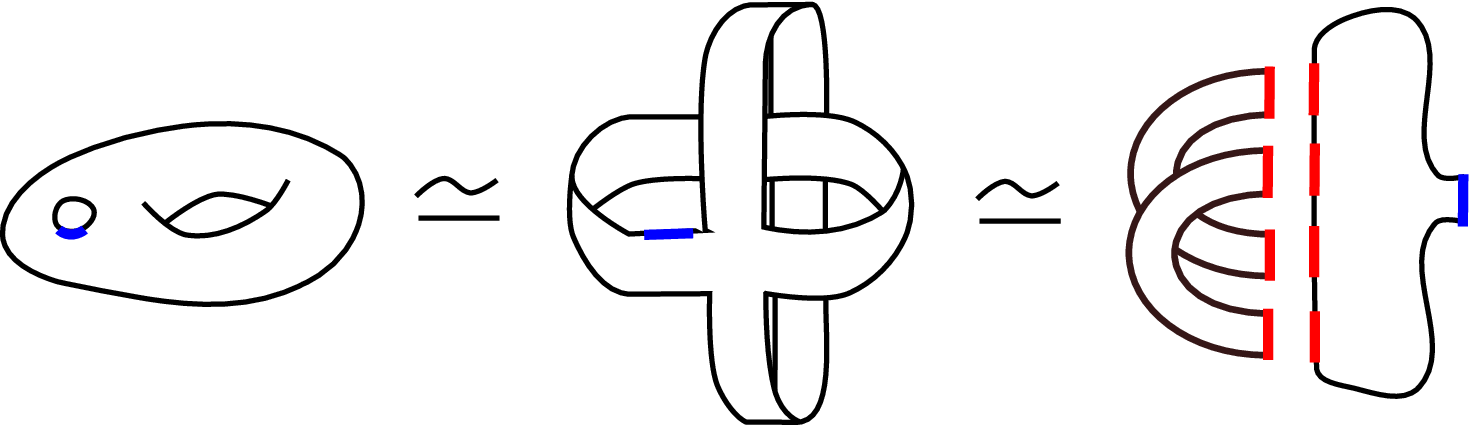}
\caption{The punctured torus constructed from a gluing pattern yields $A_{T^2\backslash D}\cong \cD_H$.}
\label{torus-gluing-pattern}
\end{figure}

Hence $A_{T^2\backslash D^2} \cong a_{T^2\backslash D^2} \cong\cD_H$, where $\cD_H$ is the algebra in $\cA=\Rep H$ which as an object is $\oaH\ot \oaH$, and whose cross relations, given in Theorem~\ref{thm:main}, coincide with those of the double  (see below).  In summary, we have:

\begin{corollary}
Let $\cA=\Rep H$.  We have the following identifications:
\[
	A_{T^2\backslash D^2} \cong \mathcal{D}_H, \qquad \int_{S} \Rep H \simeq \mathcal{D}_{H}\modu_{\Rep H}
\].
\end{corollary}

\begin{remark} Following Remark~\ref{rmk:basis} we can write the cross relations of the elliptic double more explicitly as follows.  First we choose bases for representations $V$ and $W$ of $U_q(\mf g)$.  We let $L_V, L_W$ denote the $N\times N$ (resp. $M\times M$) matrix $(L_V)^k_l = v^k\bt v_l$, (resp. $(L_W)^m_n = w^m\bt w_n$), with the entries regarded as elements of $\oaH^{13}$.  Let $D_V$ and $D_W$ denote the same matrices, but with elements regarded instead in $\oaH^{24}$.  We can write the commutation relations in matrix form:
$$L_VR_{V,W}D_W = R_{V,W}D_WR_{W,V}L_VR_{V,W},$$
or even more explicitly,
$$\sum_{j,m} l^i_jR^{jk}_{lm}\partial^m_n = \sum_{o,p,r,t,u,v} R^{ik}_{op}\partial^p_rR^{ro}_{tu}l^u_vR^{vt}_{ln},$$
where we have omitted the labels $V,W$ on $R$-matrices for ease of notation.
\end{remark}

In~\cite{Brochier2017}, the constructions of Donin-Kulish-Mudrov were generalized to the case of a punctured torus, and certain universal representations of the punctured torus braid group were constructed, compatible in an analogous way with the strand-doubling operation.

Using this universal property, it is proved in \emph{loc. cit.} that there is an action by algebra automorphisms of the mapping class group $\Gamma_{0,1}^1$, which is the universal central extension $\widetilde{SL_2(\ZZ)}$ of the modular group, on $\cD_q(G)$. Recall that $\widetilde{SL_2(\ZZ)}$ is generated by $X,Y$ with relations
	\begin{align*}
		X^4&=(XY)^3 & (X^2,Y)&=1.
	\end{align*}
	Using the presentation given in Section~\ref{sec:elliptic}, the action of the standard generators $X,Y$ is given by:
\begin{align*}
	X\cdot L_V&=D_V & X\cdot D_V&=D_VL_V^{-1}D_V^{-1}\\
	Y\cdot L_V&=L_V & Y\cdot D_V & = D_V L_V^{-1}.
\end{align*}
We call the automorphism induced by the action of $X$ the ``quantum Fourier transform'' because it degenerates to the classical Fourier transform on $D(\mf g)$.

Once again, following Proposition \ref{prop:AnnBrRec}, the representations of the braid group constructed in~\cite{Jordan2009,Brochier2017} and in Section~\ref{sec:braid} can be shown to coincide.  It follows from Section~\ref{sec:mcg} that $\cD_q(G)$ carries an action of the mapping class group of the once-bordered torus, and this coincides with the mapping class group action in~\cite{Brochier2017}.
\subsection{Factorization homology of the $r$-punctured surface of genus $g$}

The $g$-fold disjoint union of the once-punctured torus gluing pattern yields a gluing pattern for the once-punctured surface of genus $g$.  The disjoint union of this with the $r-1$-fold disjoint union of the annular gluing pattern yields a gluing pattern for $\Sigma_{g,r}$, the $r$-punctured genus $g$ surface.
Accordingly, we have
\begin{corollary}\label{prop:main-sigmagr}
Let $\cA=\Rep H$.  We have the following identifications:
\[
	A_{\Sigma_{g,r}} \cong \cD_H^{\ot g}\ot \oaH^{\ot r-1}, \qquad \int_{\Sigma_{g,r}} \Rep H \simeq \cD_H^{\ot g}\ot \oaH^{\ot r-1}\modu_{\Rep H},
\]
where $\ot$ denotes the braided tensor product of algebras in $\cA$.
\end{corollary}
\begin{figure}[h]\center
\includegraphics[height=2in]{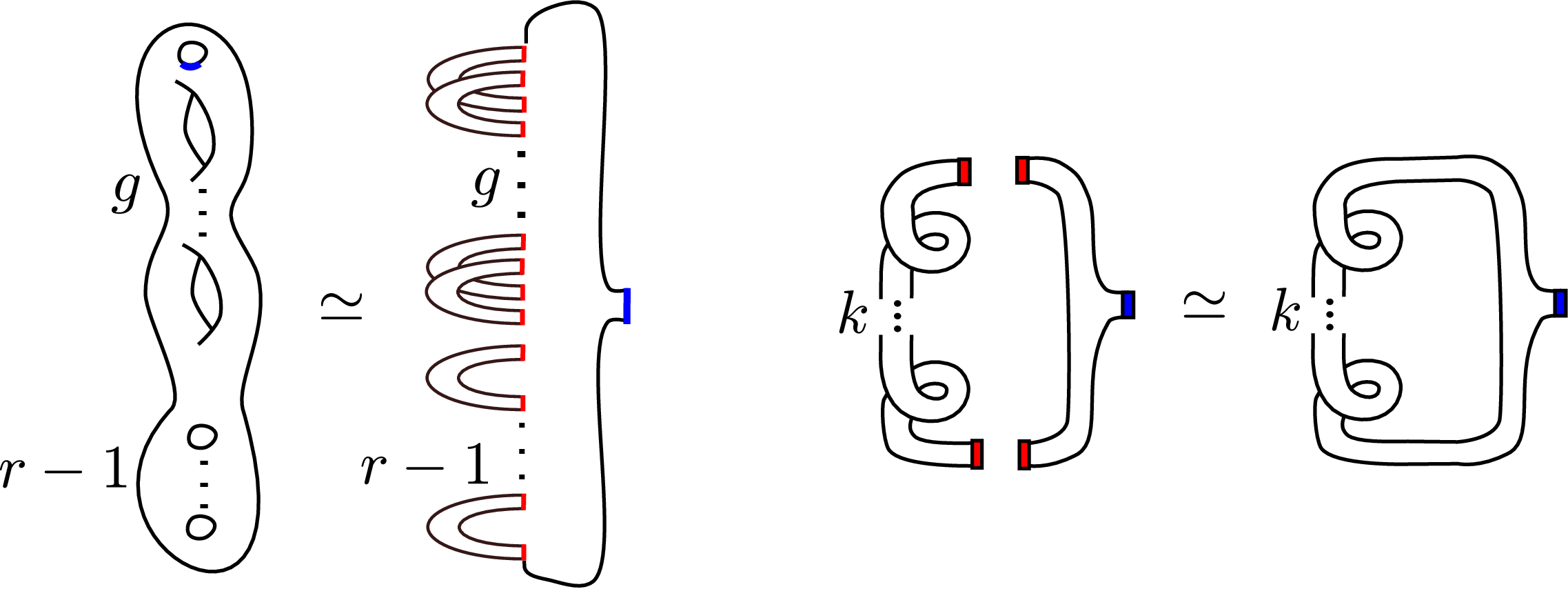}
\caption{At left, the $r$-punctured, genus $g$ surface from a gluing pattern yields the iterated tensor product of $g$ copies of $\cD_H$ and $r-1$ copies of $\oaH$.  At right, the annulus with framing of index $k$ is obtained by a coiled gluing pattern with a single coiled handle.}\label{fig:geng}
\end{figure}
\subsection{Factorization homology of non-blackboard-framed annuli}\label{sec:twisted-framings}
When we treat the case of $k$-framed annuli, we replace the algebra $\oaH\cong\oa$ with its twisted version $\oaHk{k}=\oak{k}$.

\begin{definition}[\cite{Brochier2017}]\label{def:kth-braided-dual}
For an integer $k$, the $k$-twisted braided dual Hopf algebra $\widetilde H_{k}$ is $H^{\circ}$ as a vector space, with multiplication given by
\[
x \cdot y = m ( R_{13}R_{14}(R_{31}R_{13})^k\rhd (x \ot y))
\]
where $m$ above denotes the multiplication of $H^{\circ}$.
\end{definition}
It follows immediately from their definitions that the case we have isomorphisms $\oak{k}\cong \oaHk{k}$.

In~\cite{Brochier2017}, it was explained that for any quasi-triangular Hopf algebra in fact the twisted braided duals $\widetilde H_k$ and $\widetilde H_{k+2}$ are isomorphic via the so-called Drinfeld element, and that when $H$ is moreover a ribbon Hopf algebra (i.e. the category $\Rep H$ is a ribbon tensor category), then the $\widetilde H_k$ are isomorphic for all $k$.

An annulus with framing indexed by $k\in\mathbb{Z}$ is framed-diffeomorphic to surface $
\Sigma(P,\xi)$, where $P$ is the trivial gluing pattern on one handle, and $\xi$ denotes the $k$th coiling of the handle.  See Figure \ref{fig:geng}.  Let $Ann_k$ denote the annulus equipped with the framing indexed by integer $k$.  We have the following straightforward modification of Theorem \ref{thm:main}:

\begin{corollary}\label{prop:main-twisted}
Let $\cA=\Rep H$.  We have the following identifications:

\[
	A_{S} \cong \tilde{H}_k, \qquad \int_{Ann_k} \Rep H \simeq \tilde{H}_k\modu_{\Rep H}.
\]
\end{corollary}
\begin{proof}
All that is required to modify the proof of the main theorem is to note that the role played by the right-regular module $\cA_{\cA^{\bt 2}}$ is now played instead by the same module, where the action of one of the $\cA$ factors is twisted by the $k$th power of the double-braiding. 
\end{proof}

\section{Character varieties and quantization}\label{sec:CharVar}

In this section we explain how our formalism recovers the classical character stack and the character variety, and provides a quantization of the Atiyah--Bott Poisson structure on the latter.
\subsection{Classical character varieties from $\Rep G$}
Recall that $\uch_G(S)$ and $\ch_G(S)$ are respectively the character stack and the character variety defined in the introduction. If $S$ is a connected surface of genus $g$ with $n>0$ points removed, then $\pi_1(S)$ is the free group on $2g+n-1$ generators, hence
\[
	\uch_G(S)\cong G^{2g+n-1}/G,
\]
where the (stacky) quotient is taken with respect to the diagonal adjoint action. In that case, the category of quasi-coherent sheaves on $\uch_G(S)$ is easily identified with the category of $\cO(G^{2g+n-1})$-modules in $\Rep G$. Comparing with Theorem~\ref{thm:main}, we have:
\begin{theorem}
If $S$ is a punctured surface, then we have an equivalence of categories,
$$\QCoh(\uch_G(S))\simeq \int_S\Rep G,$$
between the category of quasi-coherent sheaves on $\uch_G(S)$, and the factorization homology of $S$ with coefficients in $\Rep G$.
\end{theorem}

There is a natural identification $\uch_G(S)\cong Map(S,BG)$ where $BG$ is the classifying stack of $G$. We denote by $\Rep_\h G$ the formal version of $\Repq  G$, defined as in Section~\ref{qg-defs} but using the formal quantum group $U_\h(\mf g)$ instead. Hence, if $X$ is a topological space of dimension $0,1,2$, factorization homology with coefficients in $\Rep_\h G $ should be thought of as a quantization of the functor
\[
	QCoh(Map(-,BG)).
\]

In~\cite{Pantev2013} (see also~\cite{Toen2014}) the authors prove that the classifying stack $BG$ has a canonical 2-shifted symplectic structure induced by the Killing form on $\mf g$. It gives rise by integration/pull-back to a 0-shifted Poisson structure on $\uch_G(S)$ (symplectic when $S$ is closed) which is a stacky version of the Poisson structure on the affine algebraic variety $\ch_G(S)$. The category $\int_{S}\cA$ is a quantization of this structure. 

The main goal of this section is to show directly that our constructions also provide a quantization of the Poisson algebra of functions on the categorical quotient $\ch_G(S)$, in the sense of deformation quantization.

\subsection{Gluing patterns, fat graphs and Poisson structures}
Let $S$ be a punctured surface of genus $g$, and choose a gluing pattern $P$ for $S$ with $n$ handles, and let $\cA=\Rep_\h G$.  Recall that we have associated to this data an algebra $a_P$ in $\Rep_\h G$. The category $\Rep_\h G$ has a tautological strict tensor functor to the category of vector spaces, hence the algebra $a_P$ can be identified with an associative algebra, under this functor.  A gluing pattern can be viewed as an instance of a ciliated fat graph in the sense of~\cite{Fock1999}, with only one vertex. Recall that $G$ has a standard Poisson Lie group structure. To a ciliated graph, Fock-Rosly attach a Poisson structure on $G^{2g+n-1}$ and shows that the adjoint action of $G$ in this space is Poisson-Lie. Their main result is the following:
\begin{theorem}
	The inherited Poisson structure on $G^{2g+n-1}/G$ does not depend on the choice of the underlying ciliated fat graph, and coincides with the Atiyah--Bott Poisson structure.
\end{theorem}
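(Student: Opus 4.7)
The plan is to reduce the statement to a finite list of local moves on ciliated fat graphs with one vertex, check Poisson-equivalence of the induced structures on $G^{2g+n-1}/G$ under each move, and then identify the result with Atiyah--Bott for one convenient choice of graph.

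First I would fix two gluing patterns $P_1, P_2$ for the same punctured surface $S$ and recall that any two ciliated fat graphs representing $S$ are connected by a sequence of elementary moves: (a) cyclic rotation of the cilium at the unique vertex, (b) reversal of the orientation of an edge (generator), and (c) change of the cyclic order of edges incident to the vertex through a Whitehead-type move. Fock--Rosly's formula expresses the Poisson bivector on $G^{2g+n-1}$ as a sum of classical $r$-matrix terms indexed by pairs of half-edges, with a sign depending on their relative cyclic position. For each of the moves above, one writes out the difference of the two bivectors and shows that it lies in the ideal generated by the infinitesimal diagonal $G$-action (equivalently, that it is of ``moment-map type''). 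This immediately implies that after restriction to $G$-invariants the two Poisson brackets coincide.

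Next I would check gauge-equivariance carefully: the diagonal adjoint action on $G^{2g+n-1}$ is Poisson--Lie with moment map given by the product of commutators/monodromies around the puncture(s), so by the standard Poisson-reduction theorem applied to Poisson--Lie groups, the bracket descends to $\cO(G^{2g+n-1})^G = \cO(\ch_G(S))$ as an honest Poisson algebra. Combined with the previous paragraph, this gives a well-defined, graph-independent Poisson structure on $\ch_G(S)$.

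Finally, to identify the result with the Atiyah--Bott structure, I would pick a single convenient gluing pattern (for instance the standard ``bouquet'' pattern for $\Sigma_{g,n}$ with handles in canonical order) and invoke Goldman's intrinsic description of the Atiyah--Bott bracket in terms of the cup product $H^1(S,\ad\rho)\otimes H^1(S,\ad\rho)\to H^2(S,\RR)$ paired with the Killing form. The Fock--Rosly formula at the vertex is precisely the combinatorial model for this cup product on a one-vertex CW decomposition of $S$, which one sees by writing out the intersection pairing on $1$-cochains dual to the chosen handles and comparing term by term to the $r$-matrix expansion.

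The main obstacle, and the step I would expect to take the most work, is verifying that the difference bivectors under the Whitehead-type moves (c) really do lie in the image of the infinitesimal $G$-action; this requires the classical Yang--Baxter equation for $r$ together with some careful bookkeeping of signs coming from the ciliated order. Everything else is either a local $r$-matrix computation or a standard symplectic/Poisson-reduction argument.
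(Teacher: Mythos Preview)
This theorem is not proved in the paper at all. Look at the sentence immediately preceding it: ``Their main result is the following.'' The theorem is a quotation of the main result of Fock--Rosly \cite{Fock1999}; the paper simply invokes it as background and moves on to its own Theorem~\ref{thm:charVar}, which is about the \emph{quantization} $a_P$. So there is no ``paper's own proof'' to compare your proposal against.

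That said, your sketch is a plausible outline of how one would reprove the Fock--Rosly result, and the shape is roughly right: reduce to local moves on one-vertex ciliated graphs, check that the discrepancy between bivectors vanishes on $G$-invariants, then identify the reduced bracket with Goldman's cup-product formula. Two cautions if you were to carry this out in earnest. First, the list of ``elementary moves'' for one-vertex ciliated fat graphs is not quite the Whitehead moves you name; since you are restricting to graphs with a single vertex, the relevant moves are reorderings of the cilium and changes of the generating set of $\pi_1$ (handle slides), not edge contractions/expansions. Second, in the actual Fock--Rosly argument the independence is obtained not by checking moves one by one but by exhibiting an explicit Poisson map from their combinatorial model to the moduli of flat connections carrying the Atiyah--Bott structure; graph-independence then follows a posteriori. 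Your move-by-move strategy would work, but is more laborious than what is in \cite{Fock1999}.
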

The main result of this section is then:

\begin{theorem}\label{thm:charVar}
	The algebra $a_P$ is a quantization of the Fock--Rosly Poisson structure on $G^{2g+n-1}$. Its subalgebra of invariants, $\Hom_{U_{\h}(\mf g)}(\CC,a_p)$, does not depend on the choice of $P$ and is a quantization of the Atiyah--Bott Poisson structure on $S$.
\end{theorem}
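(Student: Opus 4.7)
The plan is to reduce the theorem to three separate tasks: (i) identify the classical limit of $a_P$ as an algebra, (ii) compute the first-order Poisson bracket from the defining cross relations and match it with Fock--Rosly's formulas, and (iii) deduce independence of $P$ on the invariant part from the intrinsic (topological) nature of $A_S$. We work over $\K=\CC[[\hbar]]$ with $\cA = \Rep U_\hbar(\mf g)$ and its tautological fiber functor to topologically free $\K$-modules, so that the categorical algebra $a_P\in\cA$ becomes an honest associative algebra in the usual sense.

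First I would analyze the classical limit $\hbar = 0$. The braided dual $\cO_\cA$ is by construction a flat $\hbar$-deformation of $\cO(G)$, and each of the crossing morphisms $L_{ij}$, $N_{ij}$, $U_{ij}$ reduces to the identity (swap of tensor factors) at $\hbar=0$ since the $R$-matrix satisfies $R = 1\otimes 1 + \hbar r + O(\hbar^2)$. Therefore the cross relations of $a_P$ become trivial at $\hbar=0$, and $a_P/\hbar\, a_P$ is canonically identified as a commutative algebra with $\cO(G)^{\otimes n} = \cO(G^{2g+r-1})$. Flatness of the deformation follows from the PBW basis of $\cO_\cA$ together with the ``straightening'' algorithm provided by the cross relations; alternatively from the monadicity description of $\int_S\cA$, since factorization homology of a flat deformation is a flat deformation.

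Next I would compute the Poisson bracket $\{-,-\}_P$ induced by $a_P$, via the standard rule $\{a,b\} = \hbar^{-1}[\tilde a,\tilde b]\mod \hbar$ for lifts $\tilde a,\tilde b$. Because the algebra structure on each factor $\cO_\cA^{(i)}$ quantizes the Semenov-Tian-Shansky bracket on $G$, the only content is in the cross brackets between $\cO_\cA^{(i)}$ and $\cO_\cA^{(j)}$ for $i<j$. For each of the six geometric configurations (positively/negatively linked, nested, unlinked), one expands the corresponding crossing operator to first order in $\hbar$ and reads off a specific combination of the components $r = r_{++} + r_0 + r_{--}$ of the classical $r$-matrix (with respect to the triangular decomposition $\mf g = \mf n_+ \oplus \mf h \oplus \mf n_-$). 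The resulting formulas are precisely those of Fock--Rosly attached to the ciliated fat graph with a single vertex whose edge-ordering around the vertex is read off from $P$. This case-by-case matching is straightforward but requires careful bookkeeping of orderings; I expect this to be the main technical obstacle.

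Finally, for the second statement I would argue as follows. By Theorem~\ref{thm:main}, the algebra $A_S = \underline{\End}_\cA(\dist_S)$ is defined intrinsically in terms of $\int_S\cA$ and the distinguished object, hence carries no dependence on the auxiliary gluing pattern $P$; the isomorphism $\gamma_P : A_S \xrightarrow{\sim} a_P$ merely trivializes this intrinsic algebra via the presentation $P$. Taking $U_\hbar(\mf g)$-invariants is an intrinsic operation, so $\Hom_\cA(\mathbf{1}_\cA, A_S)$ is independent of $P$, and its classical limit $\cO(G^{2g+r-1})^G = \cO(\ch_G(S))$ carries a Poisson bracket which, under the isomorphism $\gamma_P$, is the restriction of the Fock--Rosly bracket to $G$-invariants. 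By Fock--Rosly's theorem, this restriction is the Atiyah--Bott Poisson bracket on $\ch_G(S)$, independently of the ciliated graph, completing the proof.
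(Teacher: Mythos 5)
Your proposal follows essentially the same route as the paper: identify the classical limit of each $\cO_\cA$ factor with $\cO(G)$ carrying the Semenov--Tian--Shansky structure (the paper cites this as known, together with the classical limit of the $U_\hbar(\mf g)^{\ot 2}$-action in terms of left/right invariant vector fields), then expand the crossing morphisms $L_{ij}, N_{ij}, U_{ij}$ to first order in $\hbar$ and match case by case against the Fock--Rosly bivectors, and finally invoke Fock--Rosly's theorem together with the intrinsic description $A_S\cong a_P$ for the statement about invariants. The only cosmetic difference is that the paper organizes the first-order bookkeeping via the symmetric/antisymmetric parts $t,\rho$ of $r$ and the vector fields $x^l,x^r,x^{ad}$ rather than the triangular decomposition you mention, but the computation is the same.
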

Before the proof, let us reformulate Fock--Rosly's construction in a way convenient for our purpose (we refer the reader to the original paper~\cite{Fock1999} or the survey~\cite{Audin1997} for details). Fix a ciliated graph $\Gamma$ with one vertex, let $\Sigma$ be the corresponding surface and label the edges from 1 to $n$ in the order determined by $\Gamma$. Since $\Gamma$ has only one vertex, the Fock--Rosly Poisson bracket is defined on
\[
\cO(G)^{\ot n}=\cO(\Hom(\pi_1(\Sigma),G)).
\]
Moreover, it is enough to compute the Poisson bracket $\{f,g\}$ for $f \in \cO(G)^{(i)}$ and $g \in \cO(G)^{(j)}$ for $i\leq j$, which in turn is determined from the Poisson brackets coming from graphs with one vertex and one or two edges.

For $x \in \mf g$, we will denote by $x^l,x^r$ the action of $x$ on $G$ by left-invariant and right invariant vector field respectively, and we set $x^{ad}=x^r-x^l$. Let $r \in \mf g^{\ot 2}$ be the classical limit of the quantum R-matrix and set $\rho=\frac12 (r^{1,2}-r^{2,1})$ and $t=\frac12(r^{1,2}+r^{2,1})$. Let $\pi_{STS}$ be the bivector field
\[
\pi_{STS}=\rho^{ad,ad}+t^{r,l}-t^{l,r}.
\]
It induces on $G$ a Poisson structure which has been introduced by Semenov--Tian--Shansky~\cite{Semenov1994}. We will denote by $G_{STS}$ the variety $G$ equipped with this Poisson structure. Then $G_{STS}$ is a Poisson Lie variety under the adjoint action of the Poisson-Lie group $G$.

We can then extract the following from~\cite[Propositions 3 and 5]{Fock1999}:
\begin{theorem}\label{thm:FR}
Let $\Gamma$ be a ciliated graph with one vertex and $n$ edges. The corresponding Poisson structure on $\cO(G)^{\ot n}$ is induced by the bivector field
\[
\sum_i \pi_{STS}^{(i)}+\sum_{i<j} (\pi_{ij}-\pi_{ji})
\]
where it is understood that $\pi_{ij}$ is a 2-tensor acting on the $i$th component of the first factor and the $j$th component of the second factor of $G^{2n}=G^n\times G^n$, defined by:
\[
\pi_{ij}=
\begin{cases}
\pm (r^{ad,ad}) & \text{if $i,j$ are $\pm$ unlinked}\\
\pm (r^{ad,ad}+2t^{r,l}) & \text{if $i,j$ are $\pm$ linked}\\
\pm (r^{ad,ad}-2t^{r,r}+2t^{r,l}) & \text{if $i,j$ are $\pm$ nested }
\end{cases}
\]
\end{theorem}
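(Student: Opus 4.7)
The plan is to derive this explicit formula by specializing the general Fock--Rosly construction~\cite{Fock1999} to a ciliated fat graph with a single vertex, so the problem reduces to an enumeration of how four half-edges can sit in a cilium order around one vertex. Recall that Fock--Rosly attach to each vertex $v$ a classical $r$-matrix $r_v \in \mf g^{\ot 2}$ whose symmetric part equals the symmetric invariant tensor $t$, and define a Poisson bivector on $\cO(G)^{\ot E(\Gamma)}$ as a sum over vertices of contributions built from pairs of half-edges at $v$. In our setting $\Gamma$ has only one vertex, so there is a single $r_v=r$, and the bivector is entirely governed by the cilium ordering of the $2n$ half-edges.

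First I would set up notation: label the half-edges at the vertex by $(i,+)$ and $(i,-)$ for the two ends of edge $i$, acting on the $i$-th copy of $G$ by left- and right-invariant vector fields respectively; via the gluing pattern, the cilium places them in the order determined by $P(i), P(i')$ around the boundary of the attaching disc. Fock--Rosly's rule assigns to each ordered pair $(\alpha,\beta)$ of distinct half-edges the operator $r^{X(\alpha),X(\beta)}$, with $X(\cdot)\in\{l,r\}$ recording the left/right action, and replaces $r$ by $\rho$ on pairs involving two ends of the same edge. Splitting $r=\rho+t$ and using $t$'s symmetry, one regroups the bivector into a ``diagonal'' sum over single edges and a ``cross'' sum over pairs of edges.

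For the diagonal part, an edge $i$ (which is a loop at the vertex) contributes two half-edges acting as $l^{(i)}$ and $r^{(i)}$. Summing the two ordered-pair contributions $\rho^{l,r}+\rho^{r,l}$ and rewriting in terms of the adjoint action $x^{ad}=x^r-x^l$ gives exactly
\[
\pi_i=\rho^{ad,ad}+t^{r,l}-t^{l,r}=\pi_{STS}^{(i)}.
\]
For the cross part between edges $i<j$, the four half-edges $(i,\pm),(j,\pm)$ sit in the cilium in one of three topologically distinct patterns up to reversal (unlinked, linked, nested) with a sign recording whether the $i$-ends or the $j$-ends come first. In each case I would combine the four ordered-pair contributions $r^{X(\alpha),X(\beta)}$: the terms pairing the two ends of the same edge always collapse, via $r-r^{op}=2\rho$ and $x^{ad}=x^r-x^l$, to $\pm r^{ad,ad}$, while the remaining mixed terms produce no correction in the unlinked case, $+2t^{r,l}$ in the linked case, and $-2t^{r,r}+2t^{r,l}$ in the nested case. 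Antisymmetrizing under swapping $i\leftrightarrow j$ yields the displayed $\pi_{ij}-\pi_{ji}$.

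The only real obstacle is bookkeeping, namely matching the cilium-order convention of~\cite{Fock1999} to our gluing-pattern convention and consistently identifying which half-edge of each handle is ``incoming'' versus ``outgoing'' (so the assignments $l$ vs.\ $r$ line up in all three cases). Once this translation of conventions is fixed, each of the three cases reduces to a short identity among $\rho$, $t$, and the vector fields $l,r,ad$; the whole theorem then follows by summing the diagonal and cross contributions and checking that the resulting bivector is the one stated.
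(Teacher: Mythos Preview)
The paper does not give its own proof of this theorem: it is introduced with the words ``We can then extract the following from~\cite{Fock1999}'', and the surrounding paragraph explicitly refers the reader to the original Fock--Rosly paper or the survey~\cite{Audin1997} for details. So there is no proof to compare your proposal against.

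Your strategy --- specialize the general Fock--Rosly bivector to a one-vertex ciliated graph, split into diagonal (single-edge) and cross (pair-of-edges) contributions, and recognize the diagonal part as the Semenov--Tian--Shansky bivector $\pi_{STS}$ --- is precisely the translation exercise the paper has in mind when it speaks of ``reformulat[ing] Fock--Rosly's construction in a way convenient for our purpose''. Your own caveat is the correct one: the only real content is bookkeeping (matching cilium-order to gluing-pattern conventions, and deciding which half-edge acts by $l$ versus $r$), after which each of the three cases is a short identity in $\rho$, $t$, and the vector fields $l,r,ad$. One small warning: your diagonal computation as written (``summing $\rho^{l,r}+\rho^{r,l}$ gives $\rho^{ad,ad}+t^{r,l}-t^{l,r}$'') is too compressed to be literally correct --- expanding $\rho^{ad,ad}$ gives four terms, not two --- so when you flesh this out be careful to account for all ordered pairs of half-edges and the precise sign convention in~\cite{Fock1999}.
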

\subsection{Quantization via $\Rep_\h G$}
\begin{proof}[Proof of Theorem~\ref{thm:charVar}]
The following is well known (see e.g.~\cite{Donin2003,Mudrov2006}):
\begin{theorem}\label{thm:STS}
	The reflection equation algebra $\cO(\Rep_\h G)$ is a flat deformation of $\cO(G)$, quantizing $G_{STS}$.
\end{theorem}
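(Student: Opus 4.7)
The plan is to verify directly the two assertions contained in the statement: that $\cO_\cA$ is topologically free of the correct classical rank, and that the first-order bracket is the Semenov--Tian--Shansky bivector $\pi_{STS}$. Throughout, $\cA = \Rep U_\h(\mf g)$ and I work with formal $\h$, so ``flat deformation'' means topologically free as a $\CC[[\h]]$-module with classical fiber $\cO(G)$.

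First I would rewrite the braided multiplication in concrete $R$-matrix form. For each finite-dimensional $V \in \cA$, pick a basis and organize the matrix coefficients $T^V_{ij} = v^i \ot v_j \in V^*\ot V \subset \cO_\cA$ into a matrix $T^V$. Following the formula
\[
(V^*\ot V)\ot(W^*\ot W) \xrightarrow{\sigma_{V^*\ot V, W^*}} (W^*\ot V^*)\ot(V\ot W) \xrightarrow{\iota_{V\ot W}} \cO_\cA
\]
given just before the definition of $\cO_\cA$, one reads off the reflection equation
\[
R_{21} T^V_1 R_{12} T^W_2 \;=\; T^W_2\, R_{21} T^V_1 R_{12},
\]
which (together with the obvious naturality relations in $V,W$) presents $\cO_\cA$ by generators and relations.

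Second, I would establish flatness and identification of the classical fiber. The Majid transmutation provides a $U_\h(\mf g)$-equivariant isomorphism of $\CC[[\h]]$-modules between $\cO_\cA$ and the locally finite part of the quantum coordinate algebra $\cO_\h(G)$, implemented by multiplication by an explicit factor of the $R$-matrix. The right-hand side is known (e.g.\ by the FRT/PBW theorem) to be topologically free over $\CC[[\h]]$ with classical limit $\cO(G)$ as a commutative algebra. Since at $\h=0$ the $R$-matrix becomes the identity, the transmutation is trivial modulo $\h$, so $\cO_\cA/\h\cO_\cA \cong \cO(G)$ as commutative algebras.

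Third, I would compute the $\h$-linear term of the commutator. Expand $R = 1 + \h r + O(\h^2)$ and set $\rho = \tfrac12(r^{12}-r^{21})$, $t = \tfrac12(r^{12}+r^{21})$. Evaluating the braided multiplication above on $f \in V^*\ot V$ and $g \in W^*\ot W$, viewed as functions on $G$ via the classical fiber, the four crossings inside $\sigma_{V^*\ot V, W^*}$ each contribute a factor of $R^{\pm 1}$; antisymmetrizing in $f \leftrightarrow g$ kills the terms of order zero and isolates a bivector. Via the standard dictionary $T^V \cdot x^l, T^V \cdot x^r$ acting by left/right translation on matrix coefficients, the four surviving terms organize themselves, after grouping the symmetric and antisymmetric parts of $r$, into
\[
\tfrac1\h[\mu(f\ot g) - \mu(g\ot f)] \bmod \h \;=\; (\rho^{ad,ad} + t^{r,l} - t^{l,r})(f,g),
\]
which is precisely $\pi_{STS}$ from Theorem~\ref{thm:FR}.

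The main obstacle is the bookkeeping in the last step: one must track carefully which tensor factor of each $R$ acts by left-invariant vector fields and which by right-invariant ones, since this is what distinguishes $t^{r,l}-t^{l,r}$ from, say, $t^{r,r}-t^{l,l}$. I would handle this by evaluating $\{T^V_{ij}, T^W_{kl}\}$ explicitly in terms of $r$ acting on $V\ot W$, then recognizing the resulting operator as $\pi_{STS}$ applied to the matrix coefficient functions; the key point is that the outer $\iota_{V\ot W}$ identifies the left $V^*, W^*$ factors with right translations and the right $V, W$ factors with left translations, which forces the $r,l$ combinations appearing in $\pi_{STS}$ rather than any other.
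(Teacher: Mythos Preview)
The paper does not prove this theorem: it is stated as ``well known'' with a citation to Donin--Mudrov \cite{Donin2003,Mudrov2006}, and is used as a black box inside the proof of Theorem~\ref{thm:charVar}. So there is nothing to compare your argument against in the paper itself.

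That said, your sketch is a correct outline of the standard proof one finds in those references. The reflection-equation presentation you write down matches the paper's equation~\eqref{eqn:RE-eqn}; the transmutation argument for flatness is the usual one (an equivariant $\CC[[\h]]$-module isomorphism with the FRT dual, trivial mod $\h$); and your identification of the first-order bracket with $\pi_{STS}$ is exactly the computation in Donin--Mudrov. Your caveat about bookkeeping the $r/l$ labels is the genuine content of that step, and your proposed method --- evaluate on matrix coefficients and track which factor of each $R^{\pm 1}$ hits the dual versus the primal slot --- is the right way to resolve it. One small point: you should be explicit that the braided multiplication $\mu$ you antisymmetrize is the one coming from the coend formula, not just the reflection-equation \emph{relation}; the commutator $[f,g]$ in $\cO_\cA$ is what you want, and expanding both $\mu(f\ot g)$ and $\mu(g\ot f)$ using the braiding $\sigma_{V^*\ot V,W^*}$ (and its inverse for the opposite order) is what produces the four $R$-factors you then linearize.
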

Moreover, the left $U_{\h}(\mf g)^{\ot 2}$ action on the reflection equation algebra becomes, at the classical limit, the left $U(\mf g)^{\ot 2}$ action on $\cO(G)$, where the action of the first copy of $U(\mf g)$ is induced by
\[
\mf g\ni x \mapsto x^r
\]
and the second action is induced by
\[
\mf g \ni x \mapsto -x^l
\]
(since left invariant vector fields act on the right, the minus sign turns this into a left action). Hence to prove the claim it is enough to check that say for $i<j$ the bivector field $\pi_{ij}$ coincides with the quasi-classical limit of the action of $L_{ij}$, $N_{ij}$ or $U_{ij}$ if the handles $i,j$ are linked, nested or unlinked respectively. We prove this in the nested case, the other cases are similar: dropping indices for the sake of clarity, we have
\begin{equation}
N = \tau_{12,34}\circ R_{1,34} (R^{-1})_{34,2} = \tau_{12,34}\circ R_{12,34}(R_{34,2}R_{2,34})^{-1},
\end{equation}
and hence,
\begin{equation}\label{eq:qcl}
\frac{\tau_{12,34}\circ N-1}{\h} \mod (\h)=r^{12,34} - 2 t^{1,3}-2t^{1,4}.
\end{equation}
where we used that $r^{i,j}+r^{j,i}=2t^{i,j}$ and
\begin{align*}
r^{ij,k}&=r^{i,k}+r^{j,k} & r^{i,jk}=r^{i,j}+r^{i,k}.
\end{align*}
The right hand side of~\eqref{eq:qcl} acts on $G\times G$ via the bivector field
\[
r^{ad,ad}-2t^{r,r}+2t^{r,l}
\]
as required.
\end{proof}


\end{document}